\documentclass[12pt]{article}

\usepackage{fullpage}
\usepackage[utf8]{inputenc}
\usepackage{amssymb}
\usepackage{amsmath}
\usepackage{mathtools}
\usepackage{amsfonts}
\usepackage{amsthm}
\usepackage{bbm}
\usepackage{bm}
\usepackage{hyperref}
\usepackage{physics}

\newcommand{\transpose}[1]{\ensuremath{\prescript{t}{}{#1}}}
\newcommand{\e}[0]{\ensuremath{\mathrm{e}}}

\newtheorem{theorem}{Theorem}[section]
\newtheorem{corollary}[theorem]{Corollary}
\newtheorem{lemma}[theorem]{Lemma}
\newtheorem{proposition}[theorem]{Proposition}

\theoremstyle{definition}

\setcounter{tocdepth}{1}
\numberwithin{equation}{section}

\begin{document}
\title{Bounds for theta sums in higher rank II\footnote{Research supported by EPSRC grant EP/S024948/1}}
\date{11 May 2023}
\author{Jens Marklof and Matthew Welsh}
\maketitle

% \begin{center}
%   To Peter Sarnak on the occasion of his 70th birthday.
% \end{center}

\begin{abstract}
In the first paper of this series we established new upper bounds for multi-variable exponential sums associated with a quadratic form. The present study shows that if one adds a linear term in the exponent, the estimates can be further improved for almost all parameter values. Our results extend the bound for one-variable theta sums obtained by Fedotov and Klopp in 2012.
\end{abstract}

%\tableofcontents
%\newpage

\section{Introduction}
\label{sec:intro}

For $M > 0$, a real $n\times n$ symmetric matrix $X$, and $\bm{x}, \bm{y} \in \mathbb{R}^n$, we define a {\em theta sum} as the exponential sum
\begin{equation}
  \label{eq:SMXdef}
  \theta_f(M, X, \bm{x}, \bm{y}) = \sum_{\bm{m} \in \mathbb{Z}^n} f\left( M^{-1} (\bm{m}+ \bm{x}) \right) \e\left( \tfrac{1}{2} \bm{m} X \transpose{\bm{m}} + \bm{m} \transpose{\bm{y}} \right),
\end{equation}
where $f : \mathbb{R}^n \to \mathbb{C}$ is a rapidly decaying cut-off and $\e ( z) = \e^{2\pi \mathrm{i} z}$ for any complex $z$.
If $f=\chi_\mathcal{B}$ is the characteristic function of a bounded set $\mathcal{B}\subset \mathbb{R}^n$ we have the finite sum
\begin{equation}
  \label{eq:SMXxydef}
  \theta_f( M, X, \bm{x}, \bm{y}) = \sum_{\bm{m} \in \mathbb{Z}^n \cap (M \mathcal{B} - \bm{x})} \e( \tfrac{1}{2} \bm{m} X \transpose{\bm{m}} + \bm{m}\transpose{\bm{y}}).   
\end{equation}
In this case we will also use the notation $\theta_f=\theta_{\mathcal{B}}$. In this paper we will focus on the case when $\mathcal{B}$ is the open rectangular box $(0,b_1)\times\cdots\times (0,b_n)\subset\mathbb{R}^n$. The theorems below remain valid if $f=\chi_\mathcal{B}$ is replaced by any function $f$ in the Schwartz class $\mathcal{S}(\mathbb{R}^n)$ (infinitely differentiable, with rapid decay of all derivatives). The results in the latter case follow from a simpler version of the argument for the sharp truncation, so we do not discuss them here.
%; see \cite{MarklofWelsh2021a} for the relevant arguments.

 The principal result of part I \cite{MarklofWelsh2021a} in this series is the following. 

\begin{theorem}
  \label{theorem:thetasumbound1}
  Fix a compact subset $\mathcal{K}\subset\mathbb{R}_{>0}^n$, and let $\psi : [0,\infty) \to [1,\infty)$ be an increasing function such that
  \begin{equation}
    \label{eq:psisum1}
    \int_0^\infty \psi(t)^{-2n -2} dt < \infty.
  \end{equation}
  Then there exists a subset $\mathcal{X}(\psi) \subset \mathbb{R}^{n\times n}_{\mathrm{sym}}$ of full Lebesgue measure such that
  \begin{equation}
    \label{eq:thetasumbound1}
    \theta_\mathcal{B}(M, X, \bm{x}, \bm{y})  = O_X \big( M^{\frac{n}{2}} \psi(\log M) \big)
  \end{equation}
  for all $M\geq 1$, $\bm{b}=(b_1,\ldots,b_n)\in\mathcal{K}$, $X \in \mathcal{X}(\psi)$, $\bm{x},\bm{y}\in \mathbb{R}^n$. 
  The implied constants are independent of $M$, $\bm{b}$, $\bm{x}$ and $\bm{y}$.
\end{theorem}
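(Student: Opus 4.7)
My plan is to interpret the theta sum as the value of an automorphic function on a homogeneous space and then control its size by a dynamical Borel--Cantelli argument. The metaplectic representation of the Jacobi group $\mathrm{Sp}(n,\mathbb{R})\ltimes\mathrm{H}(n)$ intertwines the geometric action on $(X,\bm{x},\bm{y})$ with a unitary transformation of Gaussian wave packets; as a consequence there is an arithmetic lattice $\Gamma$ and a function $\Theta_f$ on $\Gamma\backslash G$ such that
\[
\theta_f(M,X,\bm{x},\bm{y}) \;=\; M^{n/2}\,\Theta_f\bigl(g(M,X,\bm{x},\bm{y})\bigr),
\]
where $g(M,X,\bm{x},\bm{y})$ puts the variable $M$ in a diagonal (geodesic) direction in the Siegel factor and encodes $(X,\bm{x},\bm{y})$ in the unipotent and Heisenberg directions. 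The first order of business would be to set this up carefully for the sharp cut-off $f=\chi_\mathcal{B}$; one standard device is to smooth $f$ on a short scale and bound the error from the smoothing by a companion Schwartz-class theta sum.

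Next I would establish a pointwise bound of the form $|\Theta_f(g)|\ll\operatorname{ht}(g)^{1/2}$ in terms of a reduction-theoretic height on $\Gamma\backslash G$. In the cusp one applies Poisson summation / theta inversion once to reduce the defining sum to a short sum which can be estimated trivially, and the square root of the height falls out from the Jacobian. The crucial ingredient for the almost-everywhere statement is then a measure estimate
\[
\operatorname{vol}\bigl\{X\in\mathcal{X}_0 : \operatorname{ht}(g(M,X,\bm{x},\bm{y}))>R\bigr\}\;\ll\; R^{-(2n+2)},
\]
uniformly in $M\geq 1$, $\bm{b}\in\mathcal{K}$, $\bm{x},\bm{y}\in\mathbb{R}^n$, for $X$ varying in a bounded set $\mathcal{X}_0\subset\mathbb{R}^{n\times n}_{\mathrm{sym}}$. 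The exponent $2n+2$ must emerge from a careful tally of the cusp volume in the $X$-direction using Minkowski/Siegel reduction of the symmetric matrix $X$.

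Granted these two ingredients, Borel--Cantelli applied on the dyadic scale $M=2^k$ with threshold $R=\psi(k\log 2)^2$ produces a full-measure set $\mathcal{X}(\psi)$ on which $\operatorname{ht}(g(2^k,X,\bm{x},\bm{y}))\leq\psi(k\log 2)^2$ for all sufficiently large $k$; the hypothesis $\int\psi(t)^{-(2n+2)}\,dt<\infty$ is exactly what makes the resulting series in $k$ converge. To pass from the dyadic lattice to all $M\geq 1$ and to uniformity in $\bm{b}\in\mathcal{K}$ and $\bm{x},\bm{y}\in\mathbb{R}^n$, I would use a Lipschitz estimate for the height along the geodesic and unipotent directions, compactness of $\mathcal{K}$, and the $\Gamma$-periodicity in $(\bm{x},\bm{y})$, at the cost of a harmless inflation of $\psi$ absorbed in the implied constant. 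I expect the main obstacle to be the measure estimate with the correct exponent $2n+2$: unlike the $n=1$ case, where continued fractions make the excursion statistics into the cusp transparent, in higher rank one must replace them by the geometric reduction theory of symmetric matrices and a careful counting of the reducible neighbourhoods in $\mathcal{X}_0$.
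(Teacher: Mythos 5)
Your overall architecture is the right one and matches the series: realize the theta sum as the value of a function $\Theta_f$ on $\tilde\Gamma\backslash(H\rtimes G)$ (Jacobi group, not just $\Gamma\backslash G$), bound $\Theta_f$ pointwise by a reduction-theoretic height via Iwasawa/Siegel reduction in the cusp, compute the cusp volume with the correct exponent, and run a dyadic Borel--Cantelli together with a Lipschitz ("uniform continuity") estimate for the height along the flow to pass from the dyadic times $M=2^k$ to all $M\geq 1$ and to uniform $\bm b\in\mathcal K$, $\bm x,\bm y$. You also correctly identify that the exponent $2n+2$ must come out of the cusp-volume count and that this is the delicate higher-rank point.

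There is, however, a genuine gap in your treatment of the sharp cut-off $f=\chi_{\mathcal B}$. Replacing $\chi_{\mathcal B}$ by a single smoothing at scale $\delta$ and "bounding the error by a companion Schwartz-class theta sum" does not deliver the claimed $M^{n/2}$ bound when $n\geq 2$: the contribution of the boundary layer is trivially of order $\max(\delta M^n, M^{n-1})$ lattice points, and already $M^{n-1}\geq M^{n/2}$ for $n\geq 2$, so any cancellation in the boundary sum must itself be proven, which is circular. The series instead uses a \emph{multi-scale dyadic decomposition} of the indicator (lemma \ref{lemma:cutoff} and (\ref{eq:chindecomp})): one writes $\chi_{(0,1)}(x)=\sum_{j\geq 0}\big(f_1(2^jx)+f_1(2^j(1-x))\big)$ for a fixed compactly supported smooth $f_1$, so that $\theta_{\mathcal B}$ becomes an infinite sum of Schwartz theta sums evaluated at right-translates of the base point by the diagonal elements $g_{\bm j,S}$ of (\ref{eq:gjSdef}). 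The Borel--Cantelli argument must then be run \emph{simultaneously} across all these right-translates, with a carefully chosen $\bm j$-dependent threshold $C\,2^{a(j_1+\cdots+j_n)}\psi(\cdot)$, $\frac{n+1}{2n+2}<a<\frac12$, so that both the measure series $\sum_{\bm j}2^{-(a(2n+2)-(n+1))|\bm j|}$ and the geometric series $\sum_{\bm j}2^{-(\frac12-a)|\bm j|}$ converge; see (\ref{eq:Xpsidef})--(\ref{eq:thetabound1}) for the analogue in the present paper. This interplay between the dyadic scale index $\bm j$ and the group action is the essential device you are missing, and it cannot be replaced by a one-shot smoothing plus an error term.
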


For example, for any $\epsilon > 0$, the function $\psi(x) = (x + 1)^{\frac{1}{2n + 2} + \epsilon}$ satisfies the condition (\ref{eq:psisum1}), which produces the bound $M^{\frac{n}{2}} ( \log M)^{\frac{1}{2n + 2} + \epsilon}$ for almost every $X$ and any $\bm{x}$ and $\bm{y}$.
This improved the previously best bound due to Cosentino and Flaminio \cite{ConsentinoFlaminio2015} by a factor of $(\log M)^n$.
Moreover, in the case $n =1$, theorem \ref{theorem:thetasumbound1} recovers the optimal result obtained by Fiedler, Jurkat and K\"orner \cite{FiedlerJurkatKorner1977}.

In what follows we establish a stronger bound than (\ref{eq:thetasumbound1}), for example $M^{\frac{n}{2}} (\log M)^{\frac{1}{2n + 4} + \epsilon}$, but now only valid for almost every $\bm{y}$.
In the case $n =1$, theorem \ref{theorem:thetasumbound2} recovers theorem 0.1 of Fedotov and Klopp \cite{FedotovKlopp2012}. 

\begin{theorem}
  \label{theorem:thetasumbound2}
  Fix a compact subset $\mathcal{K}\subset\mathbb{R}_{>0}^n\times\mathbb{R}^n$, and let $\psi : [0,\infty) \to [1,\infty)$ be an increasing function such that
  \begin{equation}
    \label{eq:psisum2}
    \int_0^\infty \psi(t)^{-2n -4} dt < \infty.
  \end{equation}
  Then there exists a subset $\tilde{\mathcal{X}}(\psi) \subset \mathbb{R}^{n\times n}_{\mathrm{sym}} \times \mathbb{R}^n$ of full Lebesgue measure such that
  \begin{equation}
    \label{eq:thetasumbound2}
    \theta_\mathcal{B}(M, X, \bm{x}, \bm{y})  = O_{X,\bm{y}} \big( M^{\frac{n}{2}} \psi(\log M) \big)
  \end{equation}
  for all $M\geq 1$, $(\bm{b},\bm{x})\in\mathcal{K}$, and $(X, \bm{y}) \in \tilde{\mathcal{X}}(\psi)$.
  The implied constants are independent of $M$, $\bm{b}$ and $\bm{x}$.
\end{theorem}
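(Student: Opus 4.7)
The plan is to follow the $L^{2k}$-moment strategy of part I, but to integrate over $\bm{y}$ in addition to $X$, taking $k=n+2$ so that the summability condition \eqref{eq:psisum2} becomes exactly what Borel--Cantelli demands. The gain of the exponent $2n+4$ over $2n+2$ in Theorem \ref{theorem:thetasumbound1} should come directly from the extra orthogonality provided by the $\bm{y}$-average.

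Concretely, I would expand $|\theta_\mathcal{B}(M,X,\bm{x},\bm{y})|^{2k}$ as a sum over $2k$-tuples $(\bm{m}_1,\ldots,\bm{m}_k,\bm{m}_1',\ldots,\bm{m}_k')$ of integer vectors and compute
\[
\int_{[0,1]^n}\int_{\mathbb{R}^{n\times n}_{\mathrm{sym}}} |\theta_\mathcal{B}(M,X,\bm{x},\bm{y})|^{2k}\,\varphi(X)\,dX\,d\bm{y}
\]
for a smooth bump $\varphi$ localising $X$ near a reference point. The $\bm{y}$-integration produces, by orthogonality of the Heisenberg characters $\e(\bm{m}\transpose{\bm{y}})$, the indicator of $\sum_i\bm{m}_i=\sum_i\bm{m}_i'$; this is the new input compared with part I. The $X$-integration yields $\hat\varphi$ evaluated at $\tfrac{1}{2}\sum_i(\bm{m}_i\otimes\bm{m}_i-\bm{m}_i'\otimes\bm{m}_i')$ and concentrates the sum near the quadratic diagonal, exactly as in part I. Combining the two constraints and invoking the counting lemma of part I for the quadratic diagonal should yield a joint moment bound of the form $O_\epsilon(M^{kn+\epsilon})$, better than the single-variable moment of part I by a factor of $M^n$.

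With this moment bound, Chebyshev's inequality shows that the set of $(X,\bm{y})$ for which $|\theta_\mathcal{B}(M,X,\bm{x},\bm{y})|>M^{n/2}\psi(\log M)$ has measure $O_\epsilon(\psi(\log M)^{-2k}M^\epsilon)$. Running this along a geometric sequence $M_j=2^j$ and invoking Borel--Cantelli then yields \eqref{eq:thetasumbound2} along $M_j$ for $(X,\bm{y})$ in a set $\tilde{\mathcal{X}}(\psi)$ of full Lebesgue measure, since the condition $\int\psi(t)^{-(2n+4)}dt<\infty$ with $2k=2n+4$ gives $\sum_j \psi(j)^{-2k}<\infty$. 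Passage from the discrete sequence $M_j$ to all $M\geq 1$ and uniformity over $(\bm{b},\bm{x})\in\mathcal{K}$ is carried out by a continuity/Lipschitz argument applied to a fine net in these parameters, as in part I; the modest logarithmic loss is absorbed by replacing $\psi$ with $\psi(\,\cdot\,+C)$, which preserves the integrability.

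The principal obstacle is the sharp truncation $\chi_\mathcal{B}$: for a Schwartz cutoff the $X$-average produces rapid Fourier decay and the moment computation is elementary, whereas for the indicator of a box one must Fourier-decompose $\chi_\mathcal{B}$ and control the resulting oscillatory sums uniformly in $\bm{b}\in\mathcal{K}$; this is the technical core inherited from part I. A secondary issue is the variable $\bm{x}$, which, unlike $\bm{y}$, cannot be averaged away because it ranges over a given compact set in the statement. Uniformity in $\bm{x}$ should be recovered via a smooth maximal function in $\bm{x}$ combined with a Sobolev-type embedding, producing only a mild logarithmic loss that is again absorbed into $\psi$.
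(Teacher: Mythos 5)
Your proposal is a genuinely different route from the paper, and, as stated, it has gaps that would need to be filled before it could be turned into a proof.

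First, the proposal mis-attributes its starting point: part I of this series does \emph{not} use an $L^{2k}$-moment/counting strategy. Both part I and the present paper work with the automorphic theta function $\Theta_f$ on $\tilde{\Gamma}\backslash(H\rtimes G)$, control it pointwise via the height function $D$ (corollary \ref{corollary:upperbound}), bound the Haar measure of $\{D \geq R\}$ (lemma \ref{lemma:volumecalculation}), and run Borel--Cantelli on the resulting volume bounds. The sharp cutoff is handled by the dyadic decomposition of lemma \ref{lemma:cutoff}, realized geometrically as multiplication by the diagonal element $g_{\bm{j},S}$ in (\ref{eq:gjSdef}), not by a Fourier expansion of $\chi_\mathcal{B}$. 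There is no ``counting lemma of part I for the quadratic diagonal'' to invoke; this is the most serious flaw in the proposal, as the alleged joint moment bound $O_\epsilon(M^{kn+\epsilon})$ is precisely what would need to be proved and is far from elementary in rank $n\geq 2$. For $n=1$ and $k=3$ the relevant count (of solutions to $\sum m_i=\sum m_i'$, $\sum m_i^2=\sum m_i'^2$) is indeed $\sim M^3$, but in higher rank the analogous system $\sum\bm{m}_i=\sum\bm{m}_i'$, $\sum\bm{m}_i\otimes\bm{m}_i=\sum\bm{m}_i'\otimes\bm{m}_i'$ has $n(n+3)/2$ equations in $2kn$ unknowns; the assertion that the diagonal dominates at $k=n+2$, \emph{without any $\epsilon$ or logarithmic loss}, is a nontrivial claim that would need a dedicated argument.

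Second, and related, the $M^\epsilon$ loss you allow yourself is not harmless for the Borel--Cantelli step. Chebyshev with moment bound $M^{kn+\epsilon}$ gives measure $\ll T^{-2k} M^\epsilon$ for the exceptional set; running this along $M_j=2^j$ with $T_j=\psi(j)$ produces $\sum_j \psi(j)^{-(2n+4)}\,2^{j\epsilon}$, which diverges for slowly growing $\psi$. To reach the sharp integrability threshold $\int\psi(t)^{-(2n+4)}\,dt<\infty$ you need the moment to be $O(M^{kn})$ with no epsilon, or at worst with a loss that is absorbable by a constant shift of $\psi$. Even a single $\log M$ factor would weaken the exponent on $\log M$ in the final bound.

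Third, the treatments of $\bm{x}$ and of the interpolation from dyadic $M_j$ to all $M$ are sketched too loosely. In the paper this is not done via a maximal function or Sobolev embedding but via lemma \ref{lemma:heightcontinuity}, a uniform-continuity property of $\log D$ under right-multiplication by Jacobi group elements close to the identity; this is exactly what the new fundamental domain (section \ref{sec:fundamentaldomain}) and the cusp-shape analysis (section \ref{sec:cuspshape}) are designed to deliver. In the moment framework you would need an analogous quantitative continuity of $\theta_\mathcal{B}$ (or of $|\theta_\mathcal{B}|^{2k}$) in $(M,\bm{x},\bm{b})$ simultaneously, uniformly over the compact set $\mathcal{K}$ and over the full-measure set of $(X,\bm{y})$; this step is not routine and is not addressed. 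Until the moment/counting estimate and the continuity argument are made precise, the proposal does not constitute a proof.
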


The paper is organized as follows.
In section \ref{sec:theta} we review some basic properties of theta functions and the Jacobi group.
The Jacobi group is defined as the semi-direct product $H \rtimes G$ of the Heisenberg group $H$ and the symplectic group $G = \mathrm{Sp}(n, \mathbb{R})$, and, following a construction due to Lion and Vergne \cite{LionVergne1980}, the theta function associated to a Schwartz function $f\in\mathcal{S}(\mathbb{R}^n)$ is a function $\Theta_f : H \rtimes G \to \mathbb{C}$ that, for appropriate $g \in G$ and $h\in H$, is a simple rescaling of the theta sums $\theta_f$.
The theta functions $\Theta_f$ satisfy an automorphy equation, theorem \ref{theorem:thetaautomorphy}, under a certain subgroup $\tilde{\Gamma} \subset H \rtimes G$.
This subgroup, defined in section \ref{sec:Gamma}, projects to the discrete subgroup $\Gamma = \mathrm{Sp}(n, \mathbb{Z}) \subset G$.

In order to exploit additional savings from the linear term parameterized by $\bm{y}$, we found it necessary to have a better understanding of the shape of the cusp of $\Gamma \backslash G$ than in the first paper in this series \cite{MarklofWelsh2021a}. For this reason we define in section \ref{sec:fundamentaldomain} a new fundamental domain for $\Gamma \backslash G$ which has ``box-shape'' cusps, as explicated in section \ref{sec:cuspshape}.

Section \ref{sec:mainproof} contains the proof of theorem \ref{theorem:thetasumbound2}, which is based on a Borel-Cantelli type argument together with a multi-dimensional dyadic decomposition of the characteristic function of the open unit cube $(0,1)^n$ that is naturally realized as an action of the diagonal subgroup of $G$.
The execution of the Borel-Cantelli argument rests on a kind of ``uniform continuity'' property of a certain height function on $H \rtimes G$ that controls the theta function $\Theta_f$, see corollary \ref{corollary:upperbound}.
The required property is proved in section \ref{sec:heightsvolumes}, see lemma \ref{lemma:heightcontinuity}, whose proof is the motivation for the creation of the fundamental domain and the study of its cuspidal regions in sections \ref{sec:fundamentaldomain} and \ref{sec:cuspshape}.
We remark that the interaction of the dyadic decomposition with the $H$ coordinate in the Jacobi group leads to additional complications not seen in \cite{MarklofWelsh2021a}, see section \ref{sec:mainproof1}.

\section{Theta functions and the Jacobi group}
\label{sec:theta}

The theta function $\Theta_f$ associated to a Schwartz function $f\in \mathcal{S}(\mathbb{R}^n)$ is a complex-valued function defined on the Jacobi group $H \rtimes G$, the semi-direct product of the Heisenberg group $H$ with the rank $n$ symplectic group $G = \mathrm{Sp}(n, \mathbb{R})$.
Here $H$ is the set $\mathbb{R}^n \times \mathbb{R}^n \times \mathbb{R}$ with multiplication given by
\begin{equation}
  \label{eq:Hmultiplication}
  (\bm{x}_1, \bm{y}_1, t_1)(\bm{x}_2, \bm{y}_2, t_2) = (\bm{x}_1 + \bm{x}_2, \bm{y}_1 + \bm{y}_2, t_1 + t_2 + \tfrac{1}{2}( \bm{y}_1 \transpose{\bm{x}_2} - \bm{x}_1 \transpose{\bm{y}_2})),
\end{equation}
and $G$ is the group of $2n \times 2n$ real matrices $g$ preserving the standard symplectic form:
\begin{equation}
  \label{eq:Gdef}
  g
  \begin{pmatrix}
    0 & -I \\
    I & 0
  \end{pmatrix}
  \transpose{g} =
  \begin{pmatrix}
    0 & -I \\
    I & 0
  \end{pmatrix} 
\end{equation}
with $I$ the $n\times n$ identity.
Alternatively, writing $g$ in $n\times n$ blocks,
\begin{equation}
  \label{eq:Gdef1}
  G =
  \left\{
    \begin{pmatrix}
      A & B \\
      C & D
    \end{pmatrix}
    : A\transpose{B} = B\transpose{A},\ C\transpose{D} = D \transpose{C},\ A\transpose{D} - B\transpose{C} = I \right\}. 
\end{equation}
We note that $G$ acts on $H$ by automorphisms via
\begin{equation}
  \label{eq:GHaction}
  h^g = (\bm{x}A + \bm{y}C, \bm{x}B + \bm{y}D, t),\ \mathrm{where\ }h = (\bm{x}, \bm{y}, t),\ g =
  \begin{pmatrix}
    A & B \\
    C & D
  \end{pmatrix}
  ,
\end{equation}
so we may define the semidirect product $H \rtimes G$, the Jacobi group, with multiplication
\begin{equation}
  \label{eq:HGmultiplication}
  (h_1, g_1)(h_2, g_2) = (h_1 h_2^{g_1^{-1}}, g_1 g_2). 
\end{equation}

The theta function is defined by
\begin{equation}
  \label{eq:Thetadef}
  \Theta_f(h, g) = \sum_{\bm{m} \in \mathbb{Z}^n} (W(h)R(g)f)(\bm{m}),
\end{equation}
where $W$ is the Schr\"odinger representation of $H$ and $R$ is the Segal-Shale-Weil (projective) representation of $G$.
We refer the reader to \cite{MarklofWelsh2021a} for details regarding these representations, including the slightly non-standard definition of $W$ and the unitary cocycle $\rho: G \times G \to \mathbb{C}$ satisfying $R(g_1g_2) = \rho(g_1, g_2) R(g_1)R(g_2)$.
We recall here that for
\begin{equation}
  \label{eq:gXY}
  g =
  \begin{pmatrix}
    I & X \\
    0 & I
  \end{pmatrix}
  \begin{pmatrix}
    Y^{\frac{1}{2}} & 0 \\
    0 & \transpose{Y}^{-\frac{1}{2}}
  \end{pmatrix}
  \in G,
\end{equation}
we have
\begin{multline}
  \label{eq:Thetaexample}
  \Theta_f((\bm{x}, \bm{y}, t), g) \\
  = (\det Y)^{\frac{1}{4}} \e( -t + \tfrac{1}{2} \bm{x}\transpose{\bm{y}}) \sum_{\bm{m} \in \mathbb{Z}^n} f( (\bm{m} + \bm{x}) Y^{\frac{1}{2}} ) \e( \tfrac{1}{2}( \bm{m} + \bm{x}) X \transpose{(\bm{m} + \bm{x})} + \bm{m}\transpose{\bm{y}}). 
\end{multline}
For $f(\bm{x}) = \exp( -\pi \bm{x}\transpose{\bm{x}})$ and $h = (0,0,0)$, we recover $(\det Y)^{\frac{1}{4}}$ times the classical Siegel theta series that is holomorphic in the complex symmetric matrix $Z = X + \mathrm{i}Y$.
Here we choose $Y^{\frac{1}{2}}$ to be the upper-triangular matrix with positive diagonal entries such that $Y^{\frac{1}{2}} \transpose{Y}^{\frac{1}{2}} = Y$, and we emphasize that $Y^{-\frac{1}{2}}$ is always interpreted as $(Y^{\frac{1}{2}})^{-1}$ and not $(Y^{-1})^{\frac{1}{2}}$.

For general $g\in G$ we have the Iwasawa decomposition,
\begin{equation}
  \label{eq:Iwasawa}
  g =
  \begin{pmatrix}
    A & B \\
    C & D
  \end{pmatrix}
  =
  \begin{pmatrix}
    I & X \\
    0 & I
  \end{pmatrix}
  \begin{pmatrix}
    Y^{\frac{1}{2}} & 0 \\
    0 & \transpose{Y}^{-\frac{1}{2}}
  \end{pmatrix}
  \begin{pmatrix}
    \mathrm{Re}(Q) & -\mathrm{Im}(Q) \\
    \mathrm{Im}(Q) & \mathrm{Re}(Q) 
  \end{pmatrix}
  ,
\end{equation}
where $X, Y$ are symmetric and $Q$ is unitary.
Explicitly, we have
\begin{align}
  \label{eq:XYQ}
  Y & = ( C\transpose{C} + D \transpose{D})^{-1} \nonumber \\
  X & = (A \transpose{C} + B \transpose{D})( C\transpose{C} + D \transpose{D})^{-1} \nonumber \\
  Q & = \transpose{Y}^{\frac{1}{2}} ( D + \mathrm{i} C).
\end{align}
We often further decompose $Y = U V \transpose{U}$ with $U$ upper-triangular unipotent and $V$ positive diagonal, so $Y^{\frac{1}{2}} = U V^{\frac{1}{2}}$.
It is easy to express the Haar measure $\mu$ on $G$ in these coordinates,
\begin{equation}
  \label{eq:Haarmeasure}
  \dd \mu(g) = \dd Q \prod_{1\leq i\leq j\leq n} \dd x_{ij} \prod_{1\leq i < j \leq n} \dd u_{ij} \prod_{1\leq j \leq n} v_j^{-n + j -2} \dd v_{jj},
\end{equation}
where $\dd Q$ is Haar measure on $\mathrm{U}(n)$ and $\dd x_{ij}$, $\dd u_{ij}$, $\dd v_{jj}$ are respectively the Lebesgue measures on the entries of $X$, $U$, $V$. 
We can also express the Haar measure on the open, dense set of $g$ which can be written as
\begin{equation}
  \label{eq:LUdecomp}
  g =
  \begin{pmatrix}
    I & X \\
    0 & I 
  \end{pmatrix}
  \begin{pmatrix}
    A & 0 \\
    0 & \transpose{A}^{-1}
  \end{pmatrix}
  \begin{pmatrix}
    I & 0 \\
    T & I
  \end{pmatrix}
\end{equation}
with $A \in \mathrm{GL}(n, \mathbb{R})$ and $X$ and $T$ symmetric.
In these coordinates we have
\begin{equation}
  \label{eq:Haarmeasure1}
  \dd \mu(g) = c (\det A)^{-2n -1} \prod_{1\leq i\leq j\leq n} \dd x_{ij} \prod_{1\leq i, j\leq n} \dd a_{ij} \prod_{1\leq i \leq j \leq n} \dd t_{ij}
\end{equation}
where $c$ is a positive constant and $\dd x_{ij}$, $\dd a_{ij}$, $\dd t_{ij}$ are respectively the Lebesgue measure on the entries of $X$, $A$, $T$, see \cite{MarklofWelsh2021a}.
We note that the Haar measure $\tilde{\mu}$ on the Jacobi group is simply
\begin{equation}
  \label{eq:Haarmeasure2}
  \dd \tilde{\mu}(h, g) = \dd {\bm{x}}\, \dd \bm{y}\, \dd t \, \dd \mu(g),
\end{equation}
with $h = (\bm{x}, \bm{y}, t)$ and $\dd \bm{x}$, $\dd \bm{y}$, and $\dd t$ the Lebesgue measures. 

We often make use of the following refinements of the Iwasawa decomposition.
For $1 \leq l \leq n$ and the same $Q$ as in (\ref{eq:Iwasawa}), we write $g \in G$ as
\begin{multline}
  \label{eq:Pjdecomp}
  \begin{pmatrix}
    I & R_l & T_l - S_l \transpose{R}_l & S_l \\
    0 & I & \transpose{S}_l & 0 \\
    0 & 0 & I & 0 \\
    0 & 0 & -\transpose{R}_l & I
  \end{pmatrix}
  \begin{pmatrix}
    U_lV_l^{\frac{1}{2}} & 0 & 0 & 0 \\
    0 & Y_l^{\frac{1}{2}} & 0 & X_l \transpose{Y}_2^{-\frac{1}{2}} \\
    0 & 0 & \transpose{U}_l^{-1}V_l^{-\frac{1}{2}} & 0 \\
    0 & 0 & 0 & \transpose{Y_l}^{-\frac{1}{2}}
  \end{pmatrix}
  \begin{pmatrix}
    \mathrm{Re}(Q) & - \mathrm{Im}(Q) \\
    \mathrm{Im}(Q) & \mathrm{Re}(Q)
  \end{pmatrix}
  ,
\end{multline}
where $R_l$ and $S_l$ are $l \times (n-l)$ matrices, $T_l$ is $l \times l$ symmetric, $U_l$ is $l\times l$ upper-triangular unipotent, $V_l$ is $l\times l$ positive diagonal, $X_l$ is $(n-l) \times (n-l)$ symmetric, and $Y_l$ is $(n-l)\times (n-l)$ positive definite symmetric. 
We note that for $l =n$ we recover $X = T_l$ and the factorization $Y = U_l V_l \transpose{U}_l$.
In what follows we use $g_l = g_l(g) \in \mathrm{Sp}(n - l, \mathbb{R})$ to denote the matrix
\begin{equation}
  \label{eq:gldef}
  g_l =
  \begin{pmatrix}
    I & X_l \\
    0 & I 
  \end{pmatrix}
  \begin{pmatrix}
    Y_l^{\frac{1}{2}} & 0 \\
    0 & \transpose{Y}_l^{-\frac{1}{2}}
  \end{pmatrix}
  .
\end{equation}

These decompositions are closely related to the Langlands decompositions of the maximal parabolic subgroups $P_l$ of $G$.
For $1\leq l < n$, $P_l$ is the subgroup of $g \in G$ which can be written in the form
\begin{equation}
  \label{eq:Pldef}
    \begin{pmatrix}
    I & R_l & T_l - S_l \transpose{\!R}_l & S_l \\
    0 & I & \transpose{\!S}_l & 0 \\
    0 & 0 & I & 0 \\
    0 & 0 & -\transpose{\!R}_l & I
  \end{pmatrix}
  \begin{pmatrix}
    a_lI & 0 & 0 & 0 \\
    0 & I & 0 & 0 \\
    0 & 0 & a_l^{-1}I & 0 \\
    0 & 0 & 0 & I
  \end{pmatrix}
  \begin{pmatrix}
    U_l & 0 & 0 & 0 \\
    0 & A_l & 0 & B_l \\
    0 & 0 & \transpose{U}_l^{-1} & 0 \\
    0 & C_l & 0 & D_l
  \end{pmatrix}
\end{equation}
where $R_l$ and $S_l$ are $l \times (n-l)$ matrices, $T_l$ is $l \times l$ symmetric, $a_l > 0$, $U_l \in \mathrm{GL}(l, \mathbb{R})$ with $\det U_l = \pm 1$, and $ g_l = 
\begin{pmatrix}
  A_l & B_l \\
  C_l & D_l
\end{pmatrix}
\in \mathrm{Sp}(n - l, \mathbb{R})$.
The maximal parabolic $P_n$ is the subgroup of $g \in G$ that can be written as
\begin{equation}
  \label{eq:Pndef}
    \begin{pmatrix}
    I & T_n \\
    0 & I 
  \end{pmatrix}
  \begin{pmatrix}
    a_n I & 0 \\
    0 & a_n^{-1} I 
  \end{pmatrix}
  \begin{pmatrix}
    U_n & 0 \\
    0 & \transpose{\!U_n}^{-1}
  \end{pmatrix}
\end{equation}
where $T_n$ is $n\times n$ symmetric, $a_n > 0$, and $U_n \in \mathrm{GL}(n, \mathbb{R})$ with $\det U_n = \pm 1$.
The factorizations (\ref{eq:Pldef}), (\ref{eq:Pndef}) are in fact the Langlands decompositions of $P_l$, $P_n$.
The first paper in this series \cite{MarklofWelsh2021a} contains more details on parabolic subgroups and their Langlands decompositions, and we refer the readers to \cite{Terras1988}, particularly sections 4.5.3 and 5.1, \cite{Knapp2002}, particularly section 7.7, and the authors' lecture notes \cite{MarklofWelsh2021c} for further details. 

\section{The subgroups $\Gamma$ and $\tilde{\Gamma}$}
\label{sec:Gamma}

We denote by $\Gamma$ the discrete subgroup $\Gamma = \mathrm{Sp}(n ,\mathbb{Z}) \subset G$. 
Recalling the notation of \cite{MarklofWelsh2021a}, for
\begin{equation}
  \label{eq:ABCDdef2}
  \gamma = 
  \begin{pmatrix}
    A & B \\
    C & D
  \end{pmatrix}
  \in \Gamma,
\end{equation}
we set $h_\gamma = (\bm{r}, \bm{s}, 0) \in H$ where the entries or $\bm{r}$ are $0$ or $\frac{1}{2}$ depending on whether the corresponding diagonal entry of $C\transpose{D}$ is even or odd, and the entries of $\bm{s}$ are $0$ or $\frac{1}{2}$ depending on whether the corresponding diagonal entry of $A\transpose{B}$ is even or odd.
As in \cite{MarklofWelsh2021a}, we now define the group $\tilde{\Gamma} \subset H \rtimes G$ by
\begin{equation}
  \label{eq:Gammatildedef}
  \tilde{\Gamma} = \{ ((\bm{m}, \bm{n}, t) h_\gamma ,  \gamma) \in H \rtimes G : \gamma \in \Gamma, \bm{m} \in \mathbb{Z}^n, \bm{n} \in \mathbb{Z}^n, t \in \mathbb{R}\}.
\end{equation}
The relevance of the subgroup $\tilde{\Gamma}$ is made apparent by the following theorem, see theorem 4.1 in \cite{MarklofWelsh2021a}.
\begin{theorem}
  \label{theorem:thetaautomorphy}
  For any $(uh_\gamma, \gamma) \in \tilde{\Gamma}$ and $(h,g) \in H \rtimes G$, there is a complex number $\varepsilon(\gamma)$ with $|\varepsilon(\gamma)| = 1$ such that
  \begin{equation}
    \label{eq:thetaautomorphy}
    \Theta_f( (uh_\gamma, \gamma)(h, g)) = \varepsilon(\gamma) \rho(\gamma, g) \e \left( -t + \tfrac{1}{2} \bm{m} \transpose{\bm{n}}\right) \Theta_f(h, g),
  \end{equation}
  where $u = (\bm{m}, \bm{n}, t)$.
\end{theorem}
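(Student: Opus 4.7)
The plan is to reduce the identity \eqref{eq:thetaautomorphy} to a classical modular transformation of $\Theta_f$ under $\Gamma$, with the Heisenberg contribution appearing explicitly as a character. I first unpack the Jacobi-group multiplication \eqref{eq:HGmultiplication}, obtaining $(uh_\gamma,\gamma)(h,g) = (u\,h_\gamma\,h^{\gamma^{-1}},\gamma g)$, so that by \eqref{eq:Thetadef}
\begin{equation*}
\Theta_f\bigl((uh_\gamma,\gamma)(h,g)\bigr) = \sum_{\bm m \in \mathbb{Z}^n}\bigl(W(u)W(h_\gamma)W(h^{\gamma^{-1}})R(\gamma g)f\bigr)(\bm m).
\end{equation*}
Using the cocycle relation $R(\gamma g) = \rho(\gamma,g)R(\gamma)R(g)$ together with the defining intertwining property of the Weil representation, $R(\gamma)W(h^{\gamma^{-1}}) = W(h)R(\gamma)$ — a direct consequence of Stone--von Neumann applied to the $G$-action \eqref{eq:GHaction} on $H$ — I can commute $R(\gamma)$ through $W(h^{\gamma^{-1}})$. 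This recovers $W(h)R(g)f$ on the right and reduces the claim to showing
\begin{equation*}
\sum_{\bm m \in \mathbb{Z}^n}\bigl(W(u)W(h_\gamma) R(\gamma) F\bigr)(\bm m) = \varepsilon(\gamma)\,\e\bigl(-t+\tfrac12 \bm m_0\transpose{\bm n_0}\bigr)\sum_{\bm m \in \mathbb{Z}^n} F(\bm m)
\end{equation*}
for every Schwartz $F$ and every $u = (\bm m_0,\bm n_0,t)$ with $\bm m_0,\bm n_0\in\mathbb{Z}^n$.

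Next I dispose of the Heisenberg factor $W(u)$. Under the convention implicit in \eqref{eq:Thetaexample}, $W(\bm m_0,\bm n_0,t)$ acts on a Schwartz function by translating the argument by $\bm m_0$ together with multiplication by a character of the form $\e(-t -(\cdot)\transpose{\bm n_0} + \tfrac12 \bm m_0\transpose{\bm n_0})$. Summing over $\bm m \in \mathbb{Z}^n$, the translation is an automorphism of the lattice and $\e(-\bm m\transpose{\bm n_0}) = 1$ identically because $\bm n_0 \in \mathbb{Z}^n$; only the overall phase $\e(-t+\tfrac12 \bm m_0\transpose{\bm n_0})$ survives. The problem is thereby reduced to the purely modular identity
\begin{equation*}
\sum_{\bm m \in \mathbb{Z}^n}\bigl(W(h_\gamma) R(\gamma) F\bigr)(\bm m) = \varepsilon(\gamma) \sum_{\bm m \in \mathbb{Z}^n} F(\bm m) \qquad (F \in \mathcal{S}(\mathbb{R}^n),\ \gamma \in \Gamma).
\end{equation*}

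The main obstacle is this last modular identity, which constitutes the substantive content of the theorem. I would verify it on a generating set of $\Gamma = \mathrm{Sp}(n,\mathbb{Z})$. For $\gamma$ of the form $\bigl(\begin{smallmatrix} I & B \\ 0 & I\end{smallmatrix}\bigr)$ with $B$ integer symmetric, $R(\gamma)$ acts by multiplication by $\e(\tfrac12 \bm w B\transpose{\bm w})$; summing over $\bm m \in \mathbb{Z}^n$ produces $\e(\tfrac12 m_i^2 B_{ii})$ on the diagonal, and since $m_i^2 \equiv m_i \pmod 2$ this matches exactly the character produced by the half-integer $\bm s$-shift in $h_\gamma$ corresponding to the parity of the diagonal of $A\transpose{B}$. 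For $\gamma = \bigl(\begin{smallmatrix} U & 0 \\ 0 & \transpose{U}^{-1}\end{smallmatrix}\bigr)$ with $U \in \mathrm{GL}(n,\mathbb{Z})$, $R(\gamma)$ is, up to a sign, pullback by $\bm w \mapsto \bm w U$, which permutes $\mathbb{Z}^n$. The deepest case is the involution $J = \bigl(\begin{smallmatrix} 0 & -I \\ I & 0\end{smallmatrix}\bigr)$, where $R(J)$ is a multiple of the Fourier transform and the automorphy follows from Poisson summation, producing the Weil constant $\varepsilon(J)$ as an eighth root of unity. Combining these cases via the cocycle $\rho$, and carefully tracking how the half-integer $\bm r, \bm s$ shifts compose against the parity-of-diagonal definitions of $h_\gamma$, yields a well-defined $\varepsilon(\gamma)$ of modulus one on all of $\Gamma$. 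This bookkeeping — precisely what the definition of $h_\gamma$ in section \ref{sec:Gamma} is engineered to absorb — is the delicate step, but it is routine in principle and is carried out in detail in \cite{MarklofWelsh2021a}.
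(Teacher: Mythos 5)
The paper does not supply a proof of this theorem: it cites theorem~4.1 of the companion paper \cite{MarklofWelsh2021a} and refers the reader to \cite{LionVergne1980}, \cite{Mumford1983}, and the authors' lecture notes \cite{MarklofWelsh2021c}. Your proposal outlines the self-contained Lion--Vergne style argument that those references contain: unpack the Jacobi multiplication, commute $R(\gamma)$ past the Schr\"odinger factor, absorb $W(u)$ into the exponential character (integer translations fix $\mathbb{Z}^n$ and $\e(-\bm{m}\transpose{\bm{n}_0})=1$ for $\bm{n}_0\in\mathbb{Z}^n$), and verify the residual modular identity on generators $\begin{pmatrix}I&B\\0&I\end{pmatrix}$, $\begin{pmatrix}U&0\\0&\transpose{U}^{-1}\end{pmatrix}$, and $J$, with Poisson summation for the last. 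The structure is right, and your parity observation $m_i^2\equiv m_i\pmod 2$ is precisely the point that makes the half-integer shift in $h_\gamma$ cancel the diagonal phase for the first family of generators.

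Two caveats. First, the intertwining you quote, $R(\gamma)W(h^{\gamma^{-1}})=W(h)R(\gamma)$, is sided incorrectly for the paper's normalization: from \eqref{eq:Thetaexample} one reads off $R(g)W(h)R(g)^{-1}=W(h^{g^{-1}})$, equivalently $W(h^{\gamma^{-1}})R(\gamma)=R(\gamma)W(h)$, and it is this form (with $R(\gamma)$ passing from right to left through $W(h^{\gamma^{-1}})$) that produces your displayed reduction to $\sum(W(u)W(h_\gamma)R(\gamma)F)(\bm{m})$; since your conclusion is the correct one, this is a notational slip rather than a broken step. Second, and more substantively, the closing step --- showing that the parity-defined shifts $h_\gamma$ compose coherently against the cocycle $\rho$ so that a single modulus-one $\varepsilon(\gamma)$ exists on all of $\Gamma$ and not just on the generating set --- is exactly the delicate content of the theorem, and you defer it wholesale to the companion paper. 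Since the present paper itself delegates the entire proof to the literature, that deferral is a defensible choice, but the proposal should be understood as a correct outline of the cited argument rather than a self-contained proof.
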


A proof of this theorem is found in \cite{LionVergne1980} but with $\Gamma$ replaced by the finite index subgroup for which $h_\gamma = (0,0,0)$.
The automorphy under the full $\tilde{\Gamma}$ is proved in \cite{Mumford1983}, but only for the special function $f(\bm{x}) = \exp( - \pi \bm{x} \transpose{\bm{x}})$.
It is shown in \cite{LionVergne1980} that this $f$ is an eigenfunction for all the operators $R(k(Q))$, with $R$ the Segal-Shale-Weil representation and $Q \in \mathrm{U}(n)$, and it can be seen from the theory built in \cite{LionVergne1980} that the automorphy for any Schwartz function follows from that for $\exp( - \pi \bm{x} \transpose{\bm{x}})$.
A self-contained proof along the lines of \cite{LionVergne1980} is presented in the authors' lecture notes \cite{MarklofWelsh2021c}. 

\subsection{Fundamental domains}
\label{sec:fundamentaldomain}

We say that a closed set $\mathcal{D} \subset G$ is a fundamental domain for $\Gamma \backslash G$ if
\begin{itemize}
\item for all $g \in G$ there exists $\gamma \in \Gamma$ such that $\gamma g \in \mathcal{D}$ and 
\item if for $g \in \mathcal{D}$ there is a non-identity $\gamma \in \Gamma$ such that $\gamma g \in \mathcal{D}$, then $g$ is contained in the boundary of $\mathcal{D}$.
\end{itemize}
Similarly a closed set $\tilde{\mathcal{D}} \subset H \rtimes G$ is a fundamental domain for $\tilde{\Gamma} \backslash (H \rtimes G)$ if
\begin{itemize}
\item for all $(h, g) \in H \rtimes G$ there exists $\tilde{\gamma} \in \tilde{\Gamma}$ such that $\tilde{\gamma}(h, g) \in \tilde{\mathcal{D}}$ and 
\item if for $(h, g) \in \tilde{\mathcal{D}}$ there is a non-identity $\tilde{\gamma} \in \tilde{\Gamma}$ such that $\tilde{\gamma} (h,g) \in \tilde{\mathcal{D}}$, then $(h,g)$ is contained in the boundary of $\tilde{\mathcal{D}}$. 
\end{itemize}
We note that if $\mathcal{D}$ is a fundamental domain for $\Gamma \backslash G$, then
\begin{equation}
  \label{eq:tildeDdef}
  \tilde{\mathcal{D}} = \left\{ (\bm{x}, \bm{y}, 0) \in H : |x_j|, |y_j| \leq \frac{1}{2} \right\}\times \mathcal{D}
\end{equation}
is a fundamental domain for $\tilde{\Gamma} \backslash (H \rtimes G)$. 

In contrast to our previous paper \cite{MarklofWelsh2021a}, here we need to make careful use of the shape of our fundamental domain $\mathcal{D}$ in the cuspidal regions.
Drawing inspiration for the fundamental domain for $\mathrm{GL}(n, \mathbb{Z}) \backslash \mathrm{GL}(n, \mathbb{R})$ constructed in \cite{Grenier1988} as well as from the reduction theory developed in \cite{BorelJi2006} (see also \cite{Borel1969}), we construct in this section a new fundamental domain $\mathcal{D} = \mathcal{D}_n$ for $\Gamma \backslash G$.
In the following section we study the cuspidal region of $\mathcal{D}_n$.

For $n = 1$, we let $\mathcal{D}_1 \subset G$ denote the standard fundamental domain for $\Gamma \backslash G = \mathrm{SL}(2, \mathbb{Z}) \backslash \mathrm{SL}(2, \mathbb{R})$.
That is,
\begin{equation}
  \label{eq:D1def}
  \mathcal{D}_1 = \left\{
    \begin{pmatrix}
      1 & x \\
      0 & 1
    \end{pmatrix}
    \begin{pmatrix}
      y^{\frac{1}{2}} & 0 \\
      0 & y^{-\frac{1}{2}}
    \end{pmatrix}
    \begin{pmatrix}
      \cos \phi & - \sin \phi \\
      \sin \phi & \cos \phi
    \end{pmatrix}
    : |x| \leq \frac{1}{2}, x^2 + y^2 \geq 1, 0\leq \phi < 2\pi\right\}.
\end{equation}
We now define fundamental domains $\mathcal{D}_n$ inductively using the decomposition (\ref{eq:Pjdecomp}) for $l =1$. 
Writing $g \in G$ as
\begin{equation}
  \label{eq:gXUVkQ}
  g =
  \begin{pmatrix}
    1 & \bm{r}_1 & t_1 - \bm{s}_1 \transpose{\bm{r}}_1 & \bm{s}_1 \\
    0 & I & \transpose{\bm{s}}_1 & 0 \\
    0 & 0 & 1 & 0 \\
    0 & 0 & -\transpose{\bm{r}}_1 & I    
  \end{pmatrix}
  \begin{pmatrix}
    1 & 0 & 0 & 0 \\
    0 & I & 0 & X_1 \\
    0 & 0 & 1 & 0 \\
    0 & 0 & 0 & I
  \end{pmatrix}
  \begin{pmatrix}
    v_1^{\frac{1}{2}} & 0 & 0 & 0 \\
    0 & Y_1^{\frac{1}{2}} & 0 & 0 \\
    0 & 0 & v_1^{-\frac{1}{2}} & 0 \\
    0 & 0 & 0 & \transpose{Y_1}^{-\frac{1}{2}}
  \end{pmatrix}
  k(Q),
\end{equation}
where $\bm{r} = \bm{r}(g) \in \mathbb{R}^{n-1}$, $\bm{s} = \bm{s}(g) \in \mathbb{R}^{n-1}$, $t_1 = t_1(g) \in \mathbb{R}$, $X_1 = X_1(g)$ is symmetric, $v_1 = v_1(g) > 0$, $Y_1 = Y_1(g)$ is positive definite symmetric, and $Q \in \mathrm{U}(n)$,
we define $\mathcal{D}_n$ as the set of all $g \in G$ satisfying
\begin{itemize}
\item $v_1(g) \geq v_1(\gamma g)$ for all $\gamma \in \Gamma$, 
\item $g_1(g) \in \mathcal{D}_{n-1}$, see (\ref{eq:gldef}), and 
\item the entries of $\bm{r}_1(g)$, $\bm{s}_1(g)$, and $t_1(g)$ are all less than or equal to $\frac{1}{2}$ in absolute value with the first entry of $\bm{r}_1$ greater than or equal to $0$.
\end{itemize}

\begin{proposition}
  \label{proposition:fundamentaldomain}
  $\mathcal{D}_n$ is a fundamental domain for $\Gamma \backslash G$.
\end{proposition}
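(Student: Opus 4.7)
The plan is to prove the statement by induction on $n$. The base case $n=1$ is the classical fact that $\mathcal{D}_1$ is a fundamental domain for $\mathrm{SL}(2,\mathbb{Z}) \backslash \mathrm{SL}(2,\mathbb{R})$, so henceforth assume the proposition for $n-1$.

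For existence---that every $\Gamma$-orbit meets $\mathcal{D}_n$---I carry out a three-stage reduction mirroring the three defining conditions of $\mathcal{D}_n$. Writing $Y = UV\transpose{U}$, a direct computation from (\ref{eq:XYQ}) yields the intrinsic identity
\[
  v_1(g)^{-1} \,=\, (Y^{-1})_{11} \,=\, \|\bm{e}_{n+1} g\|^2,
\]
where $\bm{e}_{n+1}$ denotes the $(n+1)$-st standard row basis vector in $\mathbb{R}^{2n}$. Hence $v_1(\gamma g)^{-1} = \|\bm{w} g\|^2$ with $\bm{w} = \bm{e}_{n+1}\gamma$ the $(n+1)$-st row of $\gamma$, and by the classical transitivity of $\mathrm{Sp}(n, \mathbb{Z})$ on primitive integer vectors, $\bm{w}$ varies over all primitive elements of $\mathbb{Z}^{2n}$ as $\gamma$ varies over $\Gamma$. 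Since $\mathbb{Z}^{2n} g$ is a full-rank lattice in $\mathbb{R}^{2n}$, a shortest nonzero vector exists, and $\sup_{\gamma \in \Gamma} v_1(\gamma g)$ is attained, say at $\gamma_1$. Inspecting (\ref{eq:Pldef}) at $l=1$ then identifies $\Gamma \cap P_1$ precisely as the subgroup of $\gamma \in \Gamma$ whose $(n+1)$-st row equals $\pm \bm{e}_{n+1}$; its Langlands structure delivers a surjection onto $\mathrm{Sp}(n-1, \mathbb{Z})$ (extraction of the inner symplectic block) with kernel generated by the integer translations of $R_1, S_1, T_1$ and the sign element $\varepsilon = \mathrm{diag}(-1, I_{n-1}, -1, I_{n-1})$. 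Applying the inductive hypothesis to $g_1(\gamma_1 g) \in \mathrm{Sp}(n-1, \mathbb{R})$ yields $\gamma_2 \in \Gamma \cap P_1$ with $g_1(\gamma_2 \gamma_1 g) \in \mathcal{D}_{n-1}$; further integer translations in $\Gamma \cap P_1$ (which preserve the previous two conditions) reduce $|r_j|, |s_j|, |t_1|$ to be $\leq \tfrac{1}{2}$; finally, applying $\varepsilon$---which one checks negates $(\bm{r}_1, \bm{s}_1)$ while leaving $v_1, g_1, t_1$ invariant---forces the first entry of $\bm{r}_1$ to be non-negative.

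For the boundary condition, suppose $g$ and $\gamma g$ both lie in $\mathcal{D}_n$ for some non-identity $\gamma \in \Gamma$. The first defining condition applied to both forces $v_1(\gamma g) = v_1(g)$, hence $\|\bm{w} g\| = \|\bm{e}_{n+1} g\|$ where $\bm{w}$ is the $(n+1)$-st row of $\gamma$. Off a codimension-one exceptional locus, which is itself contained in $\partial \mathcal{D}_n$, $\pm \bm{e}_{n+1}$ are the unique primitive vectors of $\mathbb{Z}^{2n}$ achieving the minimum of $\|\bm{v} g\|$, so $\gamma \in \Gamma \cap P_1$ (modulo the center $\{\pm I\}$, which acts trivially on the coordinates defining $\mathcal{D}_n$). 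The inner $\mathrm{Sp}(n-1, \mathbb{Z})$-block of $\gamma$ then maps $g_1(g) \in \mathcal{D}_{n-1}$ to another point of $\mathcal{D}_{n-1}$; by induction it is either trivial or $g_1(g) \in \partial \mathcal{D}_{n-1}$, the latter placing $g \in \partial \mathcal{D}_n$. In the trivial case, the residual translations together with $\varepsilon$ act on $(\bm{r}_1, \bm{s}_1, t_1)$ by integer shifts and sign flips, which can keep the image inside the prescribed box $|r_j|, |s_j|, |t_1| \leq \tfrac{1}{2}$ with first entry of $\bm{r}_1$ non-negative only by placing $g$ on the boundary of that box.

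The main obstacle is the opening step: identifying $v_1^{-1}$ intrinsically as a squared lattice norm, so that both the attainment of the supremum and the characterization of its stabilizer as $\Gamma \cap P_1$ reduce to the transitivity of $\mathrm{Sp}(n, \mathbb{Z})$ on primitive vectors in $\mathbb{Z}^{2n}$. Once this is in hand, the rest is a routine inductive descent through the Langlands decomposition of $P_1$.
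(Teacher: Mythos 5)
Your overall framework — induction on $n$, reducing first via maximization of $v_1$, then via the inductive step for $g_1$, then via the residual unipotent translations and the sign element — is the same as the paper's. The lattice reformulation $v_1(\gamma g)^{-1} = \|\bm{e}_{n+1}\gamma g\|^2$, together with transitivity of $\mathrm{Sp}(n,\mathbb{Z})$ on primitive vectors, is a clean way to see that the supremum of $v_1(\gamma g)$ is attained; the paper gets the same conclusion directly from the quadratic form $v_1(\gamma g)^{-1} = \bm{c}Y\transpose{\bm{c}} + (\bm{c}X+\bm{d})Y^{-1}\transpose{(\bm{c}X+\bm{d})}$ and positive definiteness, so this is a presentational rather than a mathematical difference.

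The real gap is in the boundary argument, at the sentence \emph{``Off a codimension-one exceptional locus, which is itself contained in $\partial\mathcal{D}_n$, $\pm\bm{e}_{n+1}$ are the unique primitive vectors achieving the minimum.''} The containment in $\partial\mathcal{D}_n$ is precisely what must be proved here; it is the whole content of this half of the proposition for non-parabolic $\gamma$, and you have simply asserted it. A locus being codimension one in $G$ does not by itself place it in the boundary of a closed set $\mathcal{D}_n$: one must exhibit points arbitrarily close to $g$ that leave $\mathcal{D}_n$, i.e. for which the maximality of $v_1$ fails. The paper does this by constructing explicit perturbations $g_\epsilon \to g$ with $v_1(\gamma g_\epsilon) > v_1(g_\epsilon)$, and — crucially — it needs two different perturbations: a global rescaling $Y \mapsto (1-\epsilon)Y$ when $\bm{c}\neq 0$ (i.e.\ when the first $n$ entries of the $(n+1)$-st row of $\gamma$ are nonzero), and a rescaling of the $v_1$-block only, $Y_\epsilon$ as in \eqref{eq:Yepsilon}, when $\bm{c}=0$ but $\bm{d}^{(2)}\neq 0$. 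The first perturbation is a null direction for $v_1(\gamma\cdot)^{-1} - v_1(\cdot)^{-1}$ when $\bm{c}=0$, so the two cases genuinely require different computations. Your writeup folds both into ``the exceptional locus,'' but without the perturbation computation the argument doesn't close. Everything after this point (the case $\gamma \in \Gamma\cap P_1$, inductive descent, the box constraints on $\bm{r}_1,\bm{s}_1,t_1$ and the sign element forcing $r_1^{(1)}=0$) matches the paper and is fine.
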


\begin{proof}
  We begin by showing that for $g \in G$, $\sup_{\gamma \in \Gamma} v_1(\gamma g)$ is indeed obtained by some $\gamma \in \Gamma$.
  From (\ref{eq:XYQ}), we have for
  \begin{equation}
    \label{eq:gammaform}
    \gamma =
    \begin{pmatrix}
      A & B \\
      C & D
    \end{pmatrix}
    \in \Gamma
  \end{equation}
  that
  \begin{equation}
    \label{eq:V1gammag}
    v_1(\gamma g)^{-1} =  \bm{c} Y \transpose{\bm{c}} +  ( \bm{c} X + \bm{d}) Y^{-1} \transpose{(\bm{c}X + \bm{d})}
  \end{equation}
  where
  \begin{equation}
    \label{eq:gXY}
    g =
    \begin{pmatrix}
      I & X \\
      0 & I 
    \end{pmatrix}
    \begin{pmatrix}
      Y^{\frac{1}{2}} & 0 \\
      0 & \transpose{Y}^{-\frac{1}{2}}
    \end{pmatrix}
    k(Q)
  \end{equation}
  and $\bm{c}$, $\bm{d}$ are the first rows of $C$, $D$.
  Since $Y$ is positive definite, there are only finitely many $\bm{c}$ such that $\bm{c} Y \transpose{\bm{c}}$, and hence $v_1(\gamma g)^{-1}$, is below a given bound.
  Similarly, for a fixed $\bm{c}$, the positive definiteness of $Y^{-1}$ implies that there are only finitely many $\bm{d}$ such that $v_1(\gamma g)^{-1}$ is below a given bound.
  It follows that there are only finitely many $\gamma \in \Gamma_1 \backslash \Gamma$ such that $v_1(\gamma g)$ is larger than a given bound, where $\Gamma_1 = \Gamma \cap P_1$ and we recall $P_1$ is given by (\ref{eq:Pldef}). 
  As $v_1(\gamma g) = v_1(g)$ for $\gamma \in \Gamma_1$ it follows that $v_1(\gamma g)$ is maximized for some $\gamma \in \Gamma$.

  Let $\gamma_0$ be so that $v_1(\gamma_0 g)$ is maximal.
  We now decompose an arbitrary $\gamma \in \Gamma_1$ as in (\ref{eq:Pldef}),
  \begin{equation}
    \label{eq:GammaPdecomp}
    \gamma =
    \begin{pmatrix}
      1 & \bm{r}_1 & t_1 - \bm{s}_1 \transpose{\bm{r}_1} & \bm{s}_1 \\
      0 & I & \transpose{\bm{s}_1} & 0 \\
      0 & 0 & 1 & 0 \\
      0 & 0 & -\transpose{\bm{r}_1} & I
    \end{pmatrix}
    \begin{pmatrix}
      \pm 1 & 0 & 0 & 0 \\
      0 & A_1 & 0 & B_1 \\
      0 & 0 & \pm 1 & 0 \\
      0 & C_1 & 0 & D_1
    \end{pmatrix}    
  \end{equation}
  with
  \begin{equation}
    \label{eq:gamma1def}
    \gamma_1 =
    \begin{pmatrix}
      A_1 & B_1 \\
      C_1 & D_1 
    \end{pmatrix}
    \in \mathrm{Sp}(n-1, \mathbb{Z}). 
  \end{equation}
  Proceeding inductively, there exists $\gamma_1$ such that $\gamma_1 g_1(\gamma_0g) = g_1(\gamma \gamma_0 g) \in \mathcal{D}_{n-1}$.
  Now, we can change $\bm{r}_1(\gamma)$, $\bm{s}_1(\gamma)$, $t_1(\gamma)$, and the $\pm$, noting that this does not change $g_1(\gamma \gamma_0 g)$, so that the entries of $\bm{r}_1(\gamma \gamma_0 g)$, $\bm{s}_1(\gamma \gamma_0 g)$ and $t_1(\gamma \gamma_0 g)$ are all $\leq \frac{1}{2}$ in absolute value and the first entry of $\bm{r}_1(\gamma \gamma_0 g)$ is nonnegative.
  Therefore $\gamma \gamma_1 g \in \mathcal{D}_n$ as required.

  We now suppose that $g \in \mathcal{D}_n$ and there is a non-identity $\gamma \in \Gamma$ such that $\gamma g \in \mathcal{D}_n$.
  We set
  \begin{equation}
    \label{eq:gammag}
    \gamma =
    \begin{pmatrix}
      A & B \\
      C & D
    \end{pmatrix}
    ,\quad g =
    \begin{pmatrix}
      I & X \\
      0 & I
    \end{pmatrix}
    \begin{pmatrix}
      Y^{\frac{1}{2}} & 0 \\
      0 & \transpose{Y}^{-\frac{1}{2}} 
    \end{pmatrix}
    k(Q).
  \end{equation}
  By the maximality, we have $v_1(g) = v_1(\gamma g)$ and therefore
  \begin{equation}
    \label{eq:V1equality}
    v_1^{-1} = \bm{c} Y \transpose{\bm{c}} + (\bm{c}X + \bm{d}) Y^{-1} \transpose{(\bm{c}X + \bm{d})}
  \end{equation}
  where $\bm{c}$ and $\bm{d}$ are the first rows of $C$ and $D$.
  Let us first consider the case when $\bm{c} \neq 0$.
  To show that $g$ is on the boundary of $\mathcal{D}_n$ in this case, we consider
  \begin{equation}
    \label{eq:gepsilon}
    g_\epsilon =
    \begin{pmatrix}
      I & X \\
      0 & I
    \end{pmatrix}
    \begin{pmatrix}
      (1 - \epsilon)^{\frac{1}{2}} Y^{\frac{1}{2}} & 0 \\
      0 & (1 - \epsilon)^{-\frac{1}{2}} \transpose{Y}^{-\frac{1}{2}}
    \end{pmatrix}
    k(Q)
  \end{equation}
  for $0 < \epsilon < 1$.
  We have $v_1(g_\epsilon) = (1 - \epsilon) v_1(g)$ and
  \begin{multline}
    \label{eq:V1gammagepsilon}
    v_1(\gamma g_\epsilon)^{-1} = (1 - \epsilon) \bm{c}Y \transpose{\bm{c}} + (1 - \epsilon)^{-1} (\bm{c} X + \bm{d}) Y^{-1} \transpose{(\bm{c}X + \bm{d})} \\
    = \left( (1 - \epsilon) - (1 - \epsilon)^{-1} \right) \bm{c}Y \transpose{\bm{c}} + v_1(g_\epsilon)^{-1}
  \end{multline}
  by (\ref{eq:V1equality}). 
  Since $v_1(\gamma g_\epsilon) > v_1(g_\epsilon)$, we have that $g_\epsilon \not\in \mathcal{D}_n$.
  As $g_\epsilon$ can be made arbitrarily close to $g$, we conclude that $g$ is on the boundary of $\mathcal{D}_n$.

  If $\bm{c} = 0$, then from (\ref{eq:V1equality}) we have
  \begin{equation}
    \label{eq:V1equality1}
    v_1(g)^{-1} = (d^{(1)} - \bm{d}^{(2)} \transpose{\bm{r}}_1)^2 v_1(g)^{-1} + \bm{d}^{(2)} Y_1^{-1} \transpose{\bm{d}}^{(2)}
  \end{equation}
  where $\bm{d} = \begin{pmatrix} d^{(1)} & \bm{d}^{(2)} \end{pmatrix}$ are as above
  \begin{equation}
    \label{eq:V1Y11}
    Y =
    \begin{pmatrix}
      1 & \bm{r}_1 \\
      0 & I
    \end{pmatrix}
    \begin{pmatrix}
      v_1 & 0 \\
      0 & Y_1
    \end{pmatrix}
    \begin{pmatrix}
      1 & 0 \\
      -\transpose{\bm{r}_1} & I
    \end{pmatrix}
    .
  \end{equation}
  This time we consider
  \begin{equation}
    \label{eq:gepsilon1}
    g_\epsilon =
    \begin{pmatrix}
      I & X \\
      0 & I
    \end{pmatrix}
    \begin{pmatrix}
      Y_\epsilon^{\frac{1}{2}} & 0 \\
      0 & \transpose{Y_\epsilon}^{-\frac{1}{2}}
    \end{pmatrix}
    k(Q)
  \end{equation}
  with
  \begin{equation}
    \label{eq:Yepsilon}
    Y_\epsilon =
    \begin{pmatrix}
      1 & \bm{r}_1 \\
      0 & I
    \end{pmatrix}
    \begin{pmatrix}
      (1 - \epsilon) v_1 & 0 \\
      0 & Y_1
    \end{pmatrix}
    \begin{pmatrix}
      1 & 0 \\
      -\transpose{\bm{r}_1} & I
    \end{pmatrix}
    .
  \end{equation}
  We have $v_1(g_\epsilon) = (1 - \epsilon)v_1(g)$ and
  \begin{multline}
    \label{eq:V1gammagepsilon1}
    v_1(\gamma g_\epsilon)^{-1} = (1 - \epsilon)^{-1} (d^{(1)} - \bm{d}^{(2)} \transpose{\bm{r}}_1)^2 v_1(g)^{-1} + \bm{d}^{(2)} Y_1^{-1} \transpose{\bm{d}}^{(2)} \\
    = v_1(g_\epsilon)^{-1} + \left( 1 - ( 1 - \epsilon)^{-1} \right) \bm{d}^{(2)} Y_1 \transpose{\bm{d}^{(2)}}
  \end{multline}
  from (\ref{eq:V1equality1}).
  If $\bm{d}^{(2)} \neq 0$, then $v_1(\gamma g_\epsilon) > v_1(g_\epsilon)$ and we conclude that $g$ is on the boundary of $\mathcal{D}_n$ as before.

  When $\bm{c} = 0$ and $\bm{d}^{(2)} = 0$ we have $d^{(1)} = \pm 1$, and so $\gamma \in \Gamma_1$.
  We decompose $\gamma$ as in (\ref{eq:GammaPdecomp}) and define $\gamma_1$ as in (\ref{eq:gamma1def}).
  By the construction of $\mathcal{D}_n$, we have $g_1(g) \in \mathcal{D}_{n-1}$ and $ g_1(\gamma g) = \gamma_1 g_1(g) \in \mathcal{D}_{n-1}$.
  By induction, we have that either $\gamma_1$ is the identity or $g_1(g)$ is on the boundary of $\mathcal{D}_{n-1}$.
  In the latter case we have that $g$ is on the boundary of $\mathcal{D}_n$, and so it remains to consider
  \begin{equation}
    \label{eq:lastgammaform}
    \gamma =
    \begin{pmatrix}
      \pm 1 & \bm{r}_1 & \pm t_1 \mp \bm{r}_1\transpose{\bm{s}_1} & \bm{s}_1 \\
      0 & I & \pm \transpose{\bm{s}_1} & 0 \\
      0 & 0 & \pm 1 & 0 \\
      0 & 0 & \mp \bm{r}_1 & I
    \end{pmatrix}
    .
  \end{equation}

  If any of the entries of $\bm{r}_1(\gamma)$ or $\bm{s}_1(\gamma)$ is not zero, then the corresponding entry of $\bm{r}_1(g)$ or $\bm{s}_1(g)$ is $\pm \frac{1}{2}$ and so $g$ is on the boundary of $\mathcal{D}_n$.
  Similarly if $t_1(\gamma) \neq 0$, we have $t_1(g) = \pm \frac{1}{2}$ and again $g$ is on the boundary of $\mathcal{D}_n$.
  If all of $\bm{r}_1, \bm{s}_1, t_1$ are $0$, the sign must be $-$ as $\gamma$ is not the identity, and it follows that the first entry of $\bm{r}_1(g)$ is $0$ and $g$ is again on the boundary of $\mathcal{D}_n$. 
\end{proof}

The following proposition records some useful properties of $\mathcal{D}_n$.
It and its proof are very similar to the analogous statement for the different fundamental domain used in \cite{MarklofWelsh2021a}, see  proposition 3.1 there.  
\begin{proposition}
  \label{proposition:Dnproperties}
  Let $g \in \mathcal{D}_n$ and write
  \begin{equation}
    \label{eq:gVrewrite}
    g =
    \begin{pmatrix}
      I & X \\
      0 & I
    \end{pmatrix}
    \begin{pmatrix}
      Y^{\frac{1}{2}} & 0 \\
      0 & Y^{-\frac{1}{2}}
    \end{pmatrix}
    k(Q), \quad Y = U V \transpose{U}
    ,
  \end{equation}
  where $X$ is symmetric, $Y$ is positive definite symmetric, $U$ upper triangular unipotent, $V$ positive diagonal, and $Q \in \mathrm{U}(n)$, and
  \begin{equation}
    \label{eq:Yrewrite}
    V =
    \begin{pmatrix}
      v_1 & \cdots & 0 \\
      \vdots & \ddots & \vdots \\
      0 & \cdots & v_n
    \end{pmatrix}
    ,\quad Y =
    \begin{pmatrix}
      1 & \bm{r}_1 \\
      0 & I
    \end{pmatrix}
    \begin{pmatrix}
      v_1 & 0 \\
      0 & Y_1
    \end{pmatrix}
    \begin{pmatrix}
      1 & 0 \\
      \transpose{\bm{r}_1} & I
    \end{pmatrix}
    .
  \end{equation}
  Then we have
  \begin{enumerate}
  \item $v_n \geq \frac{\sqrt{3}}{2}$ and $v_j \geq \frac{3}{4} v_{j+1}$ for $1\leq j \leq n-1$, 
  \item for all $\bm{x} =
    \begin{pmatrix}
      x^{(1)} & \bm{x}^{(2)}
    \end{pmatrix}
    \in \mathbb{R}^n$
    \begin{equation}
      \label{eq:approxortho}
      \bm{x} Y \transpose{\bm{x}} \asymp_n v_1 (x^{(1)})^2 + \bm{x}^{(2)} Y_1 \transpose{\bm{x}}^{(2)}.
    \end{equation}
  \end{enumerate}
\end{proposition}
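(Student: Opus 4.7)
The plan is to argue by induction on $n$, treating both parts together in the inductive step. For the base case $n = 1$, the explicit description (\ref{eq:D1def}) together with $|x| \leq \tfrac{1}{2}$ and $x^2 + y^2 \geq 1$ forces $v_1 = y \geq \sqrt{3}/2$, and Part 2 is vacuous since $\bm{x}^{(2)}$ is empty.

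For the inductive step of Part 1, I would apply the hypothesis to $g_1(g) \in \mathcal{D}_{n-1}$ to obtain $v_n \geq \sqrt{3}/2$ and $v_j \geq \tfrac{3}{4} v_{j+1}$ for $2 \leq j \leq n-1$, leaving only the case $v_1 \geq \tfrac{3}{4} v_2$. For this I would invoke the maximality condition $v_1(g) \geq v_1(\gamma g)$ with $\gamma \in \Gamma$ chosen so that the first rows of $C$ and $D$ are $\bm{0}$ and $\bm{e}_2$---concretely, the permutation $\gamma = \operatorname{diag}(W, \transpose{W}^{-1})$ where $W$ swaps the first two basis vectors. Then (\ref{eq:V1gammag}) collapses to $v_1(\gamma g)^{-1} = (Y^{-1})_{22}$, and computing $(Y^{-1})_{22}$ from (\ref{eq:Yrewrite}) together with the analogous block decomposition of $Y_1$ yields $(Y^{-1})_{22} = \bm{r}_1[1]^2/v_1 + 1/v_2$. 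The inequality $v_1^{-1} \leq \bm{r}_1[1]^2/v_1 + 1/v_2$ together with $|\bm{r}_1[1]| \leq \tfrac{1}{2}$ then rearranges to $v_1 \geq (1 - \bm{r}_1[1]^2) v_2 \geq \tfrac{3}{4} v_2$.

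For Part 2, direct block-matrix expansion via (\ref{eq:Yrewrite}) gives the identity
\begin{equation*}
\bm{x} Y \transpose{\bm{x}} = v_1 (x^{(1)})^2 + (\bm{x}^{(2)} + x^{(1)} \bm{r}_1) Y_1 \transpose{(\bm{x}^{(2)} + x^{(1)} \bm{r}_1)}.
\end{equation*}
Two applications of the inequality $\|a + b\|_{Y_1}^2 \leq 2\|a\|_{Y_1}^2 + 2\|b\|_{Y_1}^2$---once with $(a,b) = (\bm{x}^{(2)}, x^{(1)}\bm{r}_1)$ to get the upper bound, and once with $(a,b) = (\bm{x}^{(2)} + x^{(1)}\bm{r}_1, -x^{(1)}\bm{r}_1)$ to get the lower bound---reduce both directions of the claimed $\asymp_n$ to the single auxiliary estimate $\bm{r}_1 Y_1 \transpose{\bm{r}_1} \ll_n v_1$.

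This auxiliary estimate is the main obstacle. I would establish it by applying the inductive Part 2 to $Y_1$ with the vector $\bm{r}_1$, producing $\bm{r}_1 Y_1 \transpose{\bm{r}_1} \ll_n v_2 \bm{r}_1[1]^2 + \bm{r}_1^{(2)} Y_2 \transpose{\bm{r}_1^{(2)}}$, and iterating down the nested blocks $Y_1, Y_2, \ldots$ associated to $g_1(g), g_2(g), \ldots \in \mathcal{D}_{n-1}, \mathcal{D}_{n-2}, \ldots$. Because the fundamental-domain conditions supply $|\bm{r}_j[k]| \leq \tfrac{1}{2}$ at every level, full iteration yields $\bm{r}_1 Y_1 \transpose{\bm{r}_1} \ll_n \sum_{j=2}^n v_j \bm{r}_1[j-1]^2 \leq \tfrac{1}{4} \sum_{j=2}^n v_j$, and the chain $v_j \leq (4/3)^{j-1} v_1$ from Part 1 (already established in the present inductive step) then gives $\sum_{j=2}^n v_j \ll_n v_1$, completing the induction.
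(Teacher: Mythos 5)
Your Part 1 argument tracks the paper's proof essentially word-for-word: base case from $\mathcal{D}_1$, the inductive hypothesis applied to $g_1(g) \in \mathcal{D}_{n-1}$ to handle $v_j$ for $j \geq 2$, and the maximality condition tested against the symplectic transvection with $\bm{c}=0$, $\bm{d}=\bm{e}_2$ — which is exactly the choice leading to (\ref{eq:v1bound}).

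For Part 2, your route is genuinely different from the paper's. The paper works with the rows $\bm{y}_1, \dots, \bm{y}_n$ of $Y^{1/2}$, reduces the two-sided comparison to bounding $|\cos\phi_1|$ away from $1$, where $\phi_1$ is the angle between $\bm{y}_1$ and $\bm{y} = x_2 \bm{y}_2 + \cdots + x_n \bm{y}_n$, and evaluates $|\sin\phi_2| = v_1^{1/2}/\|\bm{y}_1\|$ via wedge products; the final input is the boundedness of the entries of $U$, which yields $\|\bm{y}_1\|^2 \ll v_1 + \cdots + v_n \ll v_1$. Your expansion $\bm{x} Y \transpose{\bm{x}} = v_1 (x^{(1)})^2 + \| \bm{x}^{(2)} + x^{(1)}\bm{r}_1 \|_{Y_1}^2$ bypasses the geometry and reduces matters to the estimate $\bm{r}_1 Y_1 \transpose{\bm{r}_1} \ll_n v_1$, which you establish by descending through the nested blocks. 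Note that the two reductions are in fact the same datum in disguise, since $\bm{y}_1 = (v_1^{1/2},\, \bm{r}_1 Y_1^{1/2})$ gives $\|\bm{y}_1\|^2 = v_1 + \bm{r}_1 Y_1 \transpose{\bm{r}_1}$, and your iteration of Part 2 plays the same role as the paper's invocation of the bounded entries of $U$. Your version is somewhat longer but more elementary and makes the constants more transparent.

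There is one genuine, though easily repaired, gap in your lower-bound step. Applying $\|a+b\|_{Y_1}^2 \leq 2\|a\|_{Y_1}^2 + 2\|b\|_{Y_1}^2$ with $(a,b)=(\bm{x}^{(2)} + x^{(1)}\bm{r}_1,\, -x^{(1)}\bm{r}_1)$ yields
\begin{equation*}
  \bm{x} Y \transpose{\bm{x}} \;\geq\; \bigl(v_1 - \bm{r}_1 Y_1 \transpose{\bm{r}_1}\bigr)(x^{(1)})^2 + \tfrac12\, \bm{x}^{(2)} Y_1 \transpose{\bm{x}^{(2)}},
\end{equation*}
and the coefficient of $(x^{(1)})^2$ is negative whenever $\bm{r}_1 Y_1 \transpose{\bm{r}_1} > v_1$. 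Since your auxiliary estimate gives only $\bm{r}_1 Y_1 \transpose{\bm{r}_1} \leq C_n v_1$ with a constant $C_n$ that can exceed $1$, the fixed factor of $2$ does not suffice. The fix is to use the weighted version $\|a\|^2 \leq (1+\lambda)\|a+b\|^2 + (1+\lambda^{-1})\|b\|^2$ with $\lambda > C_n$, which gives $\bm{x} Y \transpose{\bm{x}} \geq (1 - C_n/\lambda)\, v_1 (x^{(1)})^2 + (1+\lambda)^{-1} \bm{x}^{(2)} Y_1 \transpose{\bm{x}^{(2)}}$, both coefficients now positive and depending only on $n$. With that adjustment the argument is complete.
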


\begin{proof}
  For the first, we observe that by the inductive construction of $\mathcal{D}_n$, we have that
  \begin{equation}
    \label{eq:D1element}
    g_{n-1}(g) = 
    \begin{pmatrix}
      1 & x_{n-1}(g) \\
      0 & 1
    \end{pmatrix}
    \begin{pmatrix}
      v_n^{\frac{1}{2}} & 0 \\
      0 & v_n^{-\frac{1}{2}}
    \end{pmatrix}
    \in \mathcal{D}_1.
  \end{equation}
  As $\mathcal{D}_1$ is the standard fundamental domain for $\mathrm{SL}(2, \mathbb{Z}) \backslash \mathrm{SL}(2, \mathbb{R})$, we conclude that $v_n \geq \frac{\sqrt{3}}{2}$.

  To demonstrate that $v_j \geq \frac{3}{4} v_{j+1}$, we note that by the construction of $\mathcal{D}_n$, it suffices to consider only $j = 1$.
  We start with
  \begin{equation}
    \label{eq:V1maximality}
    v_1^{-1} \leq \bm{c} Y \transpose{\bm{c}} + (\bm{c}X + \bm{d}) Y^{-1} \transpose{( \bm{c}X + \bm{d})}
  \end{equation}
  for any $\begin{pmatrix} \bm{c} & \bm{d} \end{pmatrix} \in \mathbb{Z}^{2n}$ nonzero and primitive.
  Choosing $\bm{c} = 0$ and $\bm{d} = \begin{pmatrix} 0 & 1 & 0 \cdots & 0 \end{pmatrix}$, we have
  \begin{equation}
    \label{eq:v1bound}
    v_1^{-1} \leq v_1^{-1} ( r_1^{(1)})^2 + v_2^{-1},
  \end{equation}
  where $r_1^{(1)}$ is the first entry of $\bm{r}_1$.
  Since $0\leq  r_1^{(1)} \leq \frac{1}{2}$, we conclude that $v_1 \geq \frac{3}{4} v_2$.

  To demonstrate the second part of the proposition, we let $\bm{y}_1, \dots, \bm{y}_n$ denote the rows of
  \begin{equation}
    \label{eq:Ysquareroot}
    Y^{\frac{1}{2}} =
    \begin{pmatrix}
      1 & \bm{r}_1 \\
      0 & I 
    \end{pmatrix}
    \begin{pmatrix}
      v_1^{\frac{1}{2}} & 0 \\
      0 & Y_1^{\frac{1}{2}}
    \end{pmatrix}
    .
  \end{equation}
  Setting $\bm{y} = x_2 \bm{y}_2 + \cdots + x_n \bm{y}_n$, where the $x_j$ are the entries of $\bm{x}$, our aim is to prove that for some constants $0 < c_1 < 1 < c_2$ depending only on $n$,
  \begin{equation}
    \label{eq:rbound}
    c_1 \left( ||\bm{y}_1||^2 x_1^2 + || \bm{y} ||^2 \right) \leq || x_1 \bm{y}_1 + \bm{y} ||^2 \leq c_2 \left( ||\bm{y}_1||^2 x_1^2 + || \bm{y} ||^2 \right) ,
  \end{equation}
  from which the lower bound in (\ref{eq:approxortho}) follows as $|| \bm{y}_1 ||^2 \geq v_1$.
  The upper bound in (\ref{eq:approxortho}) follows from (\ref{eq:rbound}) and $v_1 \gg || \bm{y}_1 ||^2$, which is verified below, see (\ref{eq:r1normbound}). 
  Expanding the expression in the middle of (\ref{eq:rbound}), we find that it is enough to show that
  \begin{equation}
    \label{eq:leftsideexpand}
    2 | x_1 \bm{y}_1 \transpose{\!\bm{y}} | \leq (1 - c_1) \left( || \bm{y}_1||^2  x_1^2 + || \bm{y} ||^2 \right),
  \end{equation}
  and
  \begin{equation}
    \label{eq:middleexpand2}
    2 | x_1 \bm{y}_1 \transpose{\!\bm{y}} | \leq (c_2 -1) \left( || \bm{y}_1||^2  x_1^2 + || \bm{y} ||^2 \right).  
  \end{equation}
  The upper bound (\ref{eq:middleexpand2}) is trivial if $c_2 = 2$, and the upper bound (\ref{eq:leftsideexpand}) would follow from
  \begin{equation}
    \label{eq:weakerinequality}
    | \bm{y}_1 \transpose{\!\bm{y}} | \leq (1 - c_1) || \bm{y}_1||\; || \bm{y} ||.
  \end{equation}

  We let $0 < \phi_1 < \pi$ denote the angle between $\bm{y}_1$ and $\bm{y}$ and $ 0 < \phi_2 < \frac{\pi}{2}$ denote the angle between $\bm{y}_1$ and the hyperplane $\mathrm{span} (\bm{y}_2, \dots, \bm{y}_n)$.
  We have $\phi_2 \leq \mathrm{min}(\phi_1, \pi - \phi_1)$, and so $| \cos \phi_1 | \leq | \cos \phi_2 |$.
  We bound $ \cos \phi_2$ away from $1$ by bounding $\sin \phi_2$ away from $0$.

  We have
  \begin{equation}
    \label{eq:sinphi2bound}
    |\sin \phi_2 | = \frac{|| \bm{y}_1 \wedge \cdots \wedge \bm{y}_n || }{|| \bm{y}_1 ||\; || \bm{y}_2 \wedge \cdots \wedge \bm{y}_n ||} = \frac{v_1^{\frac{1}{2}}}{|| \bm{y}_1 ||},
  \end{equation}
  so it suffices to show that $v_1^{\frac{1}{2}} \gg || \bm{y}_1 ||$.
  Here $\wedge$ denotes the usual wedge product on $\mathbb{R}^n$ and the norm on $\bigwedge^k \mathbb{R}^n$ is given by
  \begin{equation}
    \label{eq:wedgenorm}
    || \bm{a}_1 \wedge \cdots \wedge \bm{a}_k ||^2 = \det
    \begin{pmatrix}
      \bm{a}_1 \\
      \vdots \\
      \bm{a}_k
    \end{pmatrix}
    \begin{pmatrix}
      \transpose{\bm{a}}_1 & \cdots & \transpose{\bm{a}}_k
    \end{pmatrix}
    .
  \end{equation}
  Using the inductive construction of $\mathcal{D}_n$ and the fact that the entries of $\bm{r}_1(Y), \bm{r}_1(Y_1), \dots$ are at most $\frac{1}{2}$ in absolute value, we observe that $U$ has entries bounded by a constant depending only on $n$.
  We find that
  \begin{equation}
    \label{eq:r1normbound}
    || \bm{y}_1 ||^2 \ll v_1 + \cdots + v_n \ll v_1
  \end{equation}
  with the implied constant depending on $n$. 
\end{proof}

\subsection{Shape of the cusp}
\label{sec:cuspshape}

As explicated in \cite{Borel1969} and \cite{BorelJi2006}, the cusp of $\Gamma \backslash G$ can be partitioned into $2^n -1$ box-shaped regions.
These regions are in correspondence with the conjugacy classes of proper parabolic subgroups of $G$ and are formed as $K$ times the product of three subsets, one for each of the components -- nilpotent, diagonal, and semisimple -- of the Langlands decomposition of $P$.

In what follows we use the fundamental domain $\mathcal{D}_n$ constructed in section \ref{sec:fundamentaldomain} to prove a variation of this fact, although only for the maximal parabolic subgroups (\ref{eq:Pldef}), (\ref{eq:Pndef}).
Our main result for this section is proposition \ref{proposition:vllarge}, which roughly states that if $g \in G$ is close enough the boundary in a precise sense, then $g$ can be brought into $\mathcal{D}_n$ by an element $\gamma$ in some maximal parabolic subgroup which depends on the way $g$ approaches the boundary.

For $1\leq l < n$ we denote by $\Gamma_{l,1}$ and $\Gamma_{l,2}$ the subgroups of $\Gamma_l = \Gamma \cap P_l$ given by
\begin{equation}
  \label{eq:Gammal1def}
  \Gamma_{l,1} = \left\{
    \begin{pmatrix}
      A & 0 & 0 & 0 \\
      0 & I & 0 & 0 \\
      0 & 0 & \transpose{A}^{-1} & 0 \\
      0 & 0 & 0 & I
    \end{pmatrix}
    : A \in \mathrm{GL}(l, \mathbb{Z}) \right\}
\end{equation}
and
\begin{equation}
  \label{eq:Gammal2def}
  \Gamma_{l,2} = \left\{
    \begin{pmatrix}
      I & 0 & 0 & 0 \\
      0 & A & 0 & B \\
      0 & 0 & I & 0 \\
      0 & C & 0 & D
    \end{pmatrix}
    :
    \begin{pmatrix}
      A & B \\
      C & D
    \end{pmatrix}
    \in \mathrm{Sp}(n - l, \mathbb{Z}) \right\}. 
\end{equation}
For $l =n$, we set
\begin{equation}
  \label{eq:Gamman1def}
  \Gamma_{n,1} = \left\{
    \begin{pmatrix}
      A & 0 \\
      0 & \transpose{A}^{-1}
    \end{pmatrix}
    : A \in \mathrm{GL}(n, \mathbb{Z}) \right\},
\end{equation}
and we let $\Gamma_{n,2}$ be trivial.
We now define for $g \in G$ and $1\leq l \leq n$,
\begin{equation}
  \label{eq:vlGammaldef}
  v_l( \Gamma_l g) = \min_{\gamma \in \Gamma_l} v_l(\gamma g) = \min_{\gamma \in \Gamma_{l,1}} v_l(\gamma g)
\end{equation}
and, for $1\leq l < n$,
\begin{equation}
  \label{eq:vl1Gammaldef}
  v_{l+1}( \Gamma_l g) = \max_{\gamma \in \Gamma_l} v_{l+1}(\gamma g) = \max_{\gamma \in \Gamma_{l,2}} v_{l+1}( \gamma g).
\end{equation}
We note that in the proof of proposition \ref{proposition:fundamentaldomain}, we saw that the maximum in (\ref{eq:vl1Gammaldef}) does exist.
As for the minimum in (\ref{eq:vlGammaldef}), we simply note that
\begin{equation}
  \label{eq:vlAYA}
  v_l(A U_l V_l \transpose{U}_l\transpose{A}) = \bm{a} U_l V_l \transpose{U_l} \transpose{\bm{a}}
\end{equation}
where $\bm{a}$ is the last row of $A \in \mathrm{GL}(l, \mathbb{Z})$, so the positive definiteness of $U_l V_l \transpose{U}_l$ implies that there are only finitely many values of $v_l(A U_l V_l \transpose{U}_l \transpose{A})$ below a given bound. 

We now define a fundamental domain $\mathcal{D}_l'$ for the action of $\mathrm{GL}(l, \mathbb{Z})$ on $l\times l$ positive definite symmetric matrices.
We set $\mathcal{D}_1' = \{ y > 0 \}$ and
\begin{equation}
  \label{eq:D2primedef}
  \mathcal{D}_2' = \left\{
    \begin{pmatrix}
      1 & r \\
      0 & 1 
    \end{pmatrix}
    \begin{pmatrix}
      v_1 & 0 \\
      0 & v_2 
    \end{pmatrix}
    \begin{pmatrix}
      1 & 0 \\
      r & 1
    \end{pmatrix}
    : 0 \leq r \leq \frac{1}{2},\ r^2 + \frac{v_1}{v_2} \geq 1 \right\},
\end{equation}
the standard fundamental domain for $\mathrm{GL}(2, \mathbb{Z})$ acting on $2\times 2$ positive definite symmetric matrices.
The domain $\mathcal{D}_l'$ for $l > 2$ is then defined inductively as the set of all
\begin{equation}
  \label{eq:UVUdecomp}
  Y = 
  \begin{pmatrix}
    1 & \bm{r} \\
    0 & I
  \end{pmatrix}
  \begin{pmatrix}
    v_1 & 0 \\
    0 & Y_1
  \end{pmatrix}
  \begin{pmatrix}
    1 & 0 \\
    \bm{r} & 1
  \end{pmatrix}
\end{equation}
such that
\begin{enumerate}
\item $v_1(Y) \geq v_1( A Y \transpose{A})$ for all $A \in \mathrm{GL}(l, \mathbb{Z})$,
\item $Y_1 \in \mathcal{D}_{l-1}'$, and 
\item $|r_j | \leq \frac{1}{2}$ and $0 \leq r_1 \leq \frac{1}{2}$ where $r_j$ are the entries of $\bm{r}$. 
\end{enumerate}
This is in fact the set of $Y$ such that $Y^{-1}$ is in Grenier's fundamental domain, see \cite{Grenier1988} and \cite{Terras1988}, so we do not prove that $\mathcal{D}_l'$ is a fundamental domain here.
We do however record the following properties of $\mathcal{D}_{l}'$.

\begin{lemma}
  \label{lemma:Grenier}
  Let $U V \transpose{U} \in \mathcal{D}_l'$ with
  \begin{equation}
    \label{eq:Vvj}
    V =
    \begin{pmatrix}
      v_1 & \cdots & 0 \\
      \vdots & \ddots & \vdots \\
      0 & \cdots & v_l
    \end{pmatrix}
  \end{equation}
  positive diagonal and $U$ upper triangular unipotent.
  Then we have
  \begin{enumerate}
  \item $v_j \geq \frac{3}{4} v_{j+1}$ for $1\leq j < l$,
  \item for any $\bm{x} \in \mathbb{R}^l$,
    \begin{equation}
      \label{eq:DprimeYbound}
      \bm{x} U V \transpose{U} \transpose{\bm{x}} \asymp \bm{x} V \transpose{\bm{x}}
    \end{equation}
    with implied constant depending only on $l$, and
  \item
    \begin{equation}
      \label{eq:minvl}
      \min_{A \in \mathrm{GL}(l, \mathbb{Z})} v_l ( A U V \transpose{U} \transpose{A}) \asymp v_l(UV\transpose{U})
    \end{equation}
    with implied constant depending only on $l$. 
  \end{enumerate}
\end{lemma}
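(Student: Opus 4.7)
The three assertions will be proved in the order stated, since part 2 relies on part 1 and part 3 on both. The key observation underlying parts 1 and 3 is that whenever $Y = UV\transpose{U}$ with $U$ upper triangular unipotent and $V = \mathrm{diag}(v_1,\ldots,v_l)$, the bottom-right entry satisfies $Y_{ll}=v_l$ and the top-left entry of the inverse satisfies $(Y^{-1})_{11}=v_1^{-1}$; both follow directly from the fact that $\bm{e}_l$ is fixed on the right by $\transpose{U}$ (and $\bm{e}_1$ on the left by $\transpose{U}^{-1}$).

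For part 1, I would first establish $v_1\geq \tfrac{3}{4}v_2$ and then obtain $v_j\geq\tfrac{3}{4}v_{j+1}$ for $j\geq 2$ from the inductive structure $Y_1\in\mathcal{D}_{l-1}'$, using the identity $v_j(Y_1)=v_{j+1}(Y)$. To prove the base case $j=1$, rewrite the maximality condition $v_1(Y)\geq v_1(AY\transpose{A})$ for all $A\in\mathrm{GL}(l,\mathbb{Z})$ via the above observation as $(Y^{-1})_{11}\leq \bm{b}Y^{-1}\transpose{\bm{b}}$ for every primitive $\bm{b}\in\mathbb{Z}^l$ (arising as the first column of $A^{-1}$). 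Choosing $\bm{b}=\bm{e}_2=(0,1,0,\ldots,0)$ and computing $\bm{e}_2 Y^{-1}\transpose{\bm{e}_2}$ from the block formula $Y^{-1}=\begin{pmatrix}1 & 0\\ -\transpose{\bm{r}}& I\end{pmatrix}\begin{pmatrix}v_1^{-1} & 0\\ 0 & Y_1^{-1}\end{pmatrix}\begin{pmatrix}1 & -\bm{r}\\ 0 & I\end{pmatrix}$ yields $r_1^2 v_1^{-1}+v_2^{-1}$, so $v_1^{-1}\leq r_1^2 v_1^{-1}+v_2^{-1}$; combined with $|r_1|\leq\tfrac{1}{2}$ this gives $v_1\geq\tfrac{3}{4}v_2$.

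For part 2 I would induct on $l$, the base case $l=1$ being trivial. In the inductive step, splitting $\bm{x}=(x_1,\bm{x}^{(2)})$ and expanding through the block decomposition produces $\bm{x} Y\transpose{\bm{x}} = v_1 x_1^2 + \bm{w} Y_1\transpose{\bm{w}}$ with $\bm{w}=\bm{x}^{(2)}+x_1\bm{r}$. The inductive hypothesis gives $\bm{w}Y_1\transpose{\bm{w}}\asymp \bm{w}V_1\transpose{\bm{w}}$, so the task reduces to showing $v_1 x_1^2+\bm{w}V_1\transpose{\bm{w}}\asymp v_1 x_1^2 + \bm{x}^{(2)} V_1\transpose{\bm{x}^{(2)}}$. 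Expanding $\bm{w}V_1\transpose{\bm{w}}$ produces a cross term $2 x_1\bm{x}^{(2)}V_1\transpose{\bm{r}}$ which must be absorbed. Iterating part 1 gives $v_j\leq (4/3)^{j-1}v_1$, hence the uniform bound $\bm{r}V_1\transpose{\bm{r}}\leq\tfrac{1}{4}\sum_{j\geq 2}v_j\leq C_l v_1$. The upper bound follows immediately from AM-GM and this estimate. The lower bound is the main obstacle: applying AM-GM to the cross term with a parameter $c\in(0,1)$ gives
\[
v_1 x_1^2+\bm{w}V_1\transpose{\bm{w}}\geq \bigl(v_1-(c^{-1}-1)\bm{r}V_1\transpose{\bm{r}}\bigr)x_1^2 + (1-c)\bm{x}^{(2)}V_1\transpose{\bm{x}^{(2)}},
\]
and one must choose $c$ close enough to $1$ that the coefficient of $x_1^2$ is a positive multiple of $v_1$. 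The bound $\bm{r}V_1\transpose{\bm{r}}\leq C_l v_1$ makes this possible for any $c$ with $c^{-1}-1<1/C_l$, delivering the required lower bound with constants depending only on $l$.

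Part 3 follows quickly from the first two. The identity $(AY\transpose{A})_{ll}=\bm{a}_l Y\transpose{\bm{a}_l}$, where $\bm{a}_l$ is the last row of $A$, together with the opening observation $Y'_{ll}=v_l(Y')$, gives $v_l(AY\transpose{A})=\bm{a}_l Y\transpose{\bm{a}_l}$. As $A$ varies in $\mathrm{GL}(l,\mathbb{Z})$, $\bm{a}_l$ ranges over all primitive integer vectors. Applying part 2 and then bounding a sum of squares of nonzero integers from below by $1$ yields $\bm{a}_l Y\transpose{\bm{a}_l}\gg \sum_j (a_l)_j^2 v_j\geq \min_j v_j$, and iterating part 1 gives $\min_j v_j\geq (3/4)^{l-1}v_l$. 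The reverse inequality is obtained by taking $A=I$, completing the proof.
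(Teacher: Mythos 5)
Your proof is correct. For part 3 your argument matches the paper's exactly: write $v_l(AY\transpose{A}) = \bm{a}Y\transpose{\bm{a}}$ with $\bm{a}$ the last row of $A$, apply part 2 to pass to $\bm{a}V\transpose{\bm{a}}$, then part 1 to bound below by $v_l$, and take $A=I$ for the reverse inequality. The paper disposes of parts 1 and 2 simply by citing proposition 3.1 of the companion paper, so your self-contained proofs are additional work, and they are sound. For part 1, recasting the $v_1$-maximality as $(Y^{-1})_{11}\leq\bm{b}Y^{-1}\transpose{\bm{b}}$ over primitive $\bm{b}$ (the first columns of $A^{-1}$) and testing with $\bm{b}=\bm{e}_2$ is the same computation carried out for $\mathcal{D}_n$ in proposition \ref{proposition:Dnproperties}, just expressed on the symmetric-space side. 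For part 2, your inductive argument with Cauchy--Schwarz absorption of the cross term, using $\bm{r}V_1\transpose{\bm{r}}\ll_l v_1$ (which follows from iterating part 1), is a more elementary route than the angle/wedge-product argument the paper uses for the analogous estimate in proposition \ref{proposition:Dnproperties}; it has the advantage of producing explicit, trackable constants, and both routes rely essentially on part 1. The supporting identities you invoke, namely $Y_{ll}=v_l$ and $(Y^{-1})_{11}=v_1^{-1}$ for $Y=UV\transpose{U}$ with $U$ upper-triangular unipotent, and the fact that the first column of $A^{-1}$ (resp.\ last row of $A$) ranges over primitive integer vectors as $A$ ranges over $\mathrm{GL}(l,\mathbb{Z})$, are all correct.
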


\begin{proof}
  The first and second parts are proved in proposition 3.1 of \cite{MarklofWelsh2021a}.
  To prove the third part, we note that with $\bm{a}$ the last row of $A$,
  \begin{equation}
    \label{eq:vlbound}
    v_l( A U V \transpose{U} \transpose{A}) = \bm{a} U V \transpose{U} \transpose{\bm{a}} \gg \bm{a} V \transpose{\bm{a}},
  \end{equation}
  by the second part of the lemma.
  Applying the first part of the lemma we have $\bm{a} V \transpose{\bm{a}} \gg v_l || \bm{a} ||^2 \geq v_l$, and (\ref{eq:minvl}) follows. 
\end{proof}

As the proof is almost identical to the proof of the third part of lemma \ref{lemma:Grenier}, we record the following lemma for later use.

\begin{lemma}
  \label{lemma:Dminvl}
  If $g \in \mathcal{D}_n$, then for all $1\leq l < n$,
  \begin{equation}
    \label{eq:Dminvl}
    v_l ( \Gamma_l g) \asymp v_l(g)
  \end{equation}
  with the implied constant depending only on $n$. 
\end{lemma}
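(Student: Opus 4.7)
My plan is to mimic the proof of the third part of Lemma~\ref{lemma:Grenier} essentially verbatim, with the $n$-dimensional fundamental domain $\mathcal{D}_n$ now playing the role that the Grenier domain $\mathcal{D}_l'$ played there. The upper bound $v_l(\Gamma_l g)\leq v_l(g)$ is immediate from the definition (\ref{eq:vlGammaldef}) by choosing $\gamma=I$.

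For the lower bound I would fix an arbitrary $\gamma\in\Gamma_{l,1}$ corresponding to $A\in\mathrm{GL}(l,\mathbb{Z})$ with last row $\bm{a}=(a_1,\dots,a_l)$. By (\ref{eq:vlAYA}),
\begin{equation*}
v_l(\gamma g)=\bm{a}\,U_l V_l \transpose{U}_l \transpose{\bm{a}},
\end{equation*}
where $U_l,V_l$ come from the parabolic decomposition (\ref{eq:Pjdecomp}) of $g$, with $V_l=\mathrm{diag}(v_1,\dots,v_l)$. The plan is then to chain the estimates
\begin{equation*}
\bm{a}\,U_l V_l \transpose{U}_l \transpose{\bm{a}}\;\gg\;\bm{a} V_l\transpose{\bm{a}}\;\gg\;v_l\,\|\bm{a}\|^2\;\geq\; v_l.
\end{equation*}

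The first inequality is an $l$-dimensional analogue of part~2 of Proposition~\ref{proposition:Dnproperties}; I would deduce it from the uniform boundedness of the entries of $U_l$, which is established (in its full $n$-dimensional form) inside the proof of that proposition via the observation that the $\bm{r}_1$-entries of $Y,Y_1,\dots$ are all bounded by $\tfrac12$. The second estimate uses part~1 of Proposition~\ref{proposition:Dnproperties}, which iterates to $v_j\geq(3/4)^{l-j}v_l$ for all $1\leq j\leq l$; hence each term $a_j^2 v_j$ in the expansion $\bm{a} V_l\transpose{\bm{a}}=\sum_{j=1}^l a_j^2 v_j$ is bounded below by a constant (depending only on $n$) times $a_j^2 v_l$. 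Finally, $\|\bm{a}\|^2\geq 1$ because $\bm{a}$ is a nonzero integer vector, since $A\in\mathrm{GL}(l,\mathbb{Z})$ is invertible.

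Putting these together yields $v_l(\gamma g)\gg v_l(g)$ uniformly in $\gamma\in\Gamma_{l,1}$, which supplies the lower bound on $v_l(\Gamma_l g)$. The only substantive new point compared with Lemma~\ref{lemma:Grenier}(3) is the first of the three inequalities, i.e.\ that the parabolic factor $U_l V_l \transpose{U}_l$ on $\mathcal{D}_n$ behaves like a Grenier-reduced form in its distortion of the quadratic form $\bm{a}\mapsto \bm{a} V_l\transpose{\bm{a}}$; but given the uniform bound on the entries of the Iwasawa $U$ on $\mathcal{D}_n$, this is a routine expansion of the quadratic form and I do not anticipate it as a serious obstacle.
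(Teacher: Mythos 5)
Your proposal is correct and follows essentially the same route as the paper's proof: both reduce via (\ref{eq:vlAYA}) to bounding $\bm{a}\,U_l V_l \transpose{U}_l \transpose{\bm{a}}$ for a nonzero integer vector $\bm{a}$, then use the quasi-orthogonality from part~2 of Proposition~\ref{proposition:Dnproperties} to remove $U_l$, and part~1 (the chain $v_j \gg v_l$ for $j \leq l$) together with $\|\bm{a}\|^2 \geq 1$ to conclude. The only difference is that you spell out the justification of the inequality $\bm{x}\,U_l V_l \transpose{U}_l \transpose{\bm{x}} \gg \bm{x}\, V_l\transpose{\bm{x}}$ via the boundedness of $U_l$, whereas the paper cites Proposition~\ref{proposition:Dnproperties}(2) directly.
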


\begin{proof}
  We recall from the second part of proposition \ref{proposition:Dnproperties} that for $\bm{x} \in \mathbb{R}^l$,
  \begin{equation}
    \label{eq:xUlVl}
    \bm{x} U_l V_l \transpose{U}_l \transpose{\bm{x}} \gg \bm{x} V_l \transpose{\bm{x}}. 
  \end{equation}
  We have
  \begin{equation}
    \label{eq:vlGammalbound}
    v_l(\Gamma_l g) = \min_{\substack{\bm{c} \in \mathbb{Z}^l \\ \bm{c} \neq 0}}  \bm{c} U_l V_l \transpose{U}_l \transpose{\bm{c}} \gg \min_{\substack{\bm{c} \in \mathbb{Z}^l \\ \bm{c} \neq 0}} \bm{c} V_l \transpose{\bm{c}}. 
  \end{equation}
  Now as $\bm{c} \neq 0$, we have $c_j^2 \geq 1$ for some $1\leq j\leq l$, and so
  \begin{equation}
    \label{eq:vlGammalbound1}
    v_l(\Gamma_l g) \gg v_j(g) \gg v_l(g)
  \end{equation}
  by the first part of proposition \ref{proposition:Dnproperties}.
\end{proof}

We are now ready to prove the main result for this section.

\begin{proposition}
  \label{proposition:vllarge}
  For $1\leq l \leq n$, there are constants $a_l > 0 $ such that for $l < n$, if $g \in G$ satisfies $v_l(\Gamma_l g) \geq a_l v_{l+1}(\Gamma_l g)$, and for $l=n$ if $g \in G$ satisfies $v_n(\Gamma_n g) \geq a_n$, then there exists $\gamma \in \Gamma_l$ so that $\gamma g \in \mathcal{D}_n$.
  Moreover, for this $\gamma$ we have $v_l(\Gamma_l g) \asymp v_l(\gamma g)$ and, for $l < n$, $v_{l+1}( \Gamma_{l} g) = v_{l+1}(\gamma g)$. 
\end{proposition}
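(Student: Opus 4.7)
The plan is to construct $\gamma \in \Gamma_l$ as a composition $\gamma = \gamma_3\gamma_1\gamma_2$, with factors from $\Gamma_{l,2}$, $\Gamma_{l,1}$, and the integer unipotent radical of $P_l$ respectively, and then to verify $\gamma g \in \mathcal{D}_n$. The hypothesis enters precisely in checking the maximality conditions in the definition of $\mathcal{D}_n$. First, proposition \ref{proposition:fundamentaldomain} applied at rank $n-l$ yields $\gamma_2 \in \Gamma_{l,2}$ with $g_l(\gamma_2 g) \in \mathcal{D}_{n-l}$; since $\Gamma_{l,2}$ does not alter the first $l$ Iwasawa coordinates, this simultaneously realises $v_{l+1}(\gamma_2 g) = v_{l+1}(\Gamma_l g)$. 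Next, I pick $\gamma_1 \in \Gamma_{l,1}$ so that the Schur complement $Y/Y_{22} = U_{11}V_{11}\transpose{U}_{11}$ of $\gamma_1\gamma_2 g$ lies in Grenier's domain $\mathcal{D}_l'$; lemma \ref{lemma:Grenier}(1,3) then yields $v_j \geq \frac{3}{4}v_{j+1}$ for $j < l$ and $v_l(\gamma_1\gamma_2 g) \asymp v_l(\Gamma_l g)$. Finally, $\gamma_3$ is taken from the integer unipotent radical of $P_l$ (with a sign flip in $\Gamma_{l,1}$ if needed) to place the remaining $\bm{r}_k, \bm{s}_k, t_k$ into their fundamental intervals.

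Set $h = \gamma g$. The unipotent conditions and $g_l(h) \in \mathcal{D}_{n-l}$ hold by construction, and the inductive definition of $\mathcal{D}_n$ then reduces the remaining checks to the maximality of $v_k(g_{k-1}(h))$ for $k = 1, \ldots, l$. The argument is identical at every level, so I sketch it only for $k = 1$. By (\ref{eq:V1gammag}), this amounts to the lower bound
\begin{equation*}
Q(\bm{c},\bm{d}) \;=\; \bm{c}Y\transpose{\bm{c}} + (\bm{c}X+\bm{d})Y^{-1}\transpose{(\bm{c}X+\bm{d})} \;\geq\; v_1^{-1}
\end{equation*}
for every primitive $(\bm{c},\bm{d}) \in \mathbb{Z}^{2n}$, with equality at $(0,e_1)$ via the Schur identity $(Y^{-1})_{[1:l,1:l]} = (U_{11}V_{11}\transpose{U}_{11})^{-1}$ together with Grenier's reduction.

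The main obstacle is this lower bound on competing primitive lattice vectors, where the hypothesis enters. If $\bm{c} = 0$ and $\bm{d}$ is supported in the first $l$ coordinates, the bound follows from Grenier's reduction applied to the $l \times l$ block. If $\bm{c} = 0$ and $\bm{d}$ has a nonzero entry at some index $j_0 > l$, then writing $\bm{d}' = \bm{d}\transpose{U}^{-1}$ and using the lower unipotent structure of $\transpose{U}^{-1}$ yields $\bm{d}'_{j_0} = d_{j_0}$, so $Q \geq v_{j_0}^{-1} \geq v_{l+1}^{-1} \geq v_1^{-1}$ holds automatically. If $\bm{c} \neq 0$ with smallest nonzero index $j_0$, the upper unipotent structure of $U$ gives $(\bm{c}U)_{j_0} = c_{j_0}$, hence $Q \geq v_{j_0}c_{j_0}^2 \geq \min(v_l, v_n)$; combined with $v_1 \geq v_l \geq a_l v_{l+1} \geq c_n a_l$ (for a constant $c_n > 0$ coming from the gap conditions on the last $n-l$ coordinates) and $v_n \geq \sqrt{3}/2$, this exceeds $v_1^{-1} \lesssim a_l^{-1}$ once $a_l$ is chosen sufficiently large depending on $n$. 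The case $l = n$ uses only the $\bm{c} \neq 0$ analysis via the direct hypothesis $v_n \geq a_n$. Once the maximality is established, the stated comparisons $v_l(\Gamma_l g) \asymp v_l(\gamma g)$ and $v_{l+1}(\Gamma_l g) = v_{l+1}(\gamma g)$ are immediate from the construction of $\gamma$.
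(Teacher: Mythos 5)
Your proposal follows essentially the same route as the paper: reduce by $\Gamma_{l,2}$ and $\Gamma_{l,1}$ so that $g_l \in \mathcal{D}_{n-l}$ and $U_l V_l \transpose{U}_l \in \mathcal{D}_l'$, then verify the maximality of $v_1$ (and iteratively $v_2,\dots,v_l$) over $\Gamma$ by a case analysis on the supports of $\bm{c},\bm{d}$, invoking the hypothesis $v_l \gtrsim a_l v_{l+1}$ to make the competing values large. One small inaccuracy: in your case ``$\bm{c}=0$, $\bm{d}$ nonzero at some $j_0 > l$'' you assert the chain $Q \geq v_{j_0}^{-1} \geq v_{l+1}^{-1} \geq v_1^{-1}$ ``holds automatically,'' but both comparisons are only up to multiplicative constants (from the approximate monotonicity in $\mathcal{D}_{n-l}$ and $\mathcal{D}_l'$), and the final step $v_1 \gtrsim v_{l+1}$ is where the hypothesis enters; as in your $\bm{c}\neq 0$ case you still need $a_l$ sufficiently large to conclude. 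The paper avoids this by applying the $\mathcal{D}_{n-l}$ maximality directly to the pair $(\bm{c}^{(2)},\bm{d}^{(2)})$ to get the clean inequality $Q \geq v_{l+1}^{-1}$ before invoking the hypothesis, which is slightly tidier but not a different idea.
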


We remark that this proposition can be  extended to any of the parabolic subgroups $P_L$ of $G$ by taking intersections of the maximal parabolics.
However some care needs to be taken regarding the possible non-uniqueness of the $\gamma$ bringing $g$ into $\mathcal{D}_n$.
Since it is unnecessary for our goals, we do not discuss this here.

\begin{proof}
  By multiplying $g$ by
  \begin{equation}
    \label{eq:gamma1form}
    \gamma_1 =
    \begin{pmatrix}
      A' & 0 & 0 & 0 \\
      0 & A & 0 & B \\
      0 & 0 & \transpose{(A')}^{-1} & 0 \\
      0 & C & 0 & D
    \end{pmatrix}
    \in \Gamma_l,
  \end{equation}
  we may assume that $U_l V_l \transpose{U}_l \in \mathcal{D}_l'$ and
  \begin{equation}
    \label{eq:XlYlDnl}
    \begin{pmatrix}
      I & X_l \\
      0 & I 
    \end{pmatrix}
    \begin{pmatrix}
      Y_l^{\frac{1}{2}} & 0 \\
      0 & \transpose{Y_l}^{-\frac{1}{2}}
    \end{pmatrix}
    \in \mathcal{D}_{n-l}.
  \end{equation}
  We recall that for $\gamma =
  \begin{pmatrix}
    A & B \\
    C & D
  \end{pmatrix}
  $,
  \begin{equation}
    \label{eq:v1gammag}
    v_1(\gamma g)^{-1} = \bm{c} Y \transpose{\bm{c}} + ( \bm{c}X + \bm{d}) Y^{-1} \transpose{ ( \bm{c} X + \bm{d})}
  \end{equation}
  where $\bm{c}$, $\bm{d}$ are the first rows of $C$, $D$.
  Now, writing $\bm{c} =
  \begin{pmatrix}
    \bm{c}^{(1)} & \bm{c}^{(2)}
  \end{pmatrix}
  $, $\bm{d} =
  \begin{pmatrix}
    \bm{d}^{(1)} & \bm{d}^{(2)}
  \end{pmatrix}
  $ and
  \begin{align}
    \label{eq:XYlcoordinates}
    & X =
    \begin{pmatrix}
      T_l + R_l X_l \transpose{R}_l & S_l + R_l X_l \\
      \transpose{S}_l + X_l \transpose{R}_l & X_l
    \end{pmatrix}
    , \\
    & Y =
    \begin{pmatrix}
      U_l & R_l \\
      0 & I
    \end{pmatrix}
    \begin{pmatrix}
      V_l & 0 \\
      0 & Y_l
    \end{pmatrix}
    \begin{pmatrix}
      \transpose{U}_l & 0 \\
      \transpose{R}_l & I
    \end{pmatrix}
    ,
  \end{align}
  see (\ref{eq:Pjdecomp}), we obtain
  \begin{align}
    \label{eq:v1gammaglcoordinates}
    v_1(\gamma g)^{-1} = & \bm{c}^{(1)} U_l V_l \transpose{U}_l \transpose{\bm{c}}^{(1)} + ( \bm{c}^{(1)} R_l + \bm{c}^{(2)}) Y_l \transpose{( \bm{c}^{(1)} R_l + \bm{c}^{(2)})} \nonumber \\
                         & + \left( \bm{c}^{(1)} ( T_l - S_l \transpose{R}_l) + \bm{c}^{(2)} \transpose{S}_l + \bm{d}^{(1)} - \bm{d}^{(2)} \transpose{R}_l\right) \transpose{U}_l^{-1} V_l^{-1} U_l^{-1} \nonumber \\
                         & \qquad \transpose{\left( \bm{c}^{(1)} ( T_l - S_l \transpose{R}_l) + \bm{c}^{(2)} \transpose{S}_l + \bm{d}^{(1)} - \bm{d}^{(2)} \transpose{R}_l\right)} \nonumber \\
                         & + \left( \bm{c}^{(1)} ( S_l + R_l X_l) + \bm{c}^{(2)} X_l + \bm{d}^{(2)} \right) Y_l^{-1} \nonumber \\
                         & \qquad \transpose{\left( \bm{c}^{(1)} ( S_l + R_l X_l) + \bm{c}^{(2)} X_l + \bm{d}^{(2)} \right)}.
  \end{align}

  If $\bm{c}^{(1)} \neq 0$, then, since $U_l V_l \transpose{U}_l \in \mathcal{D}_l'$, we have
  \begin{equation}
    \label{eq:vlmin}
    v_1(\gamma g)^{-1} \geq \bm{c}^{(1)} U_l V_l \transpose{U}_l \transpose{\bm{c}}^{(1)} \gg \bm{c}^{(1)} V_l \transpose{\bm{c}}^{(1)} \gg  v_l
  \end{equation}
  by the second part of lemma \ref{lemma:Grenier}.    
  Since, for $l < n$,
  \begin{equation}
    \label{eq:XlYlDnl1}
    \begin{pmatrix}
      I & X_l \\
      0 & I 
    \end{pmatrix}
    \begin{pmatrix}
      Y_l^{\frac{1}{2}} & 0 \\
      0 & \transpose{Y_l}^{-\frac{1}{2}}
    \end{pmatrix}
    \in \mathcal{D}_{n-l},
  \end{equation}
  we have $v_{l+1} \gg 1$, see proposition \ref{proposition:Dnproperties}, and so $v_l \gg a_l$ by the hypothesis.
  For $l =n$, we directly have $v_n \gg a_n$ by hypothesis.
  Since also $v_1 \gg v_l$ by lemma \ref{lemma:Grenier}, we have $v_1 v_l \gg a_l^2$, so by taking $a_l$ to be a sufficiently large constant, it follows that $v_1 \geq v_1(\gamma g)$. 

  For $l < n$, if $\bm{c}^{(1)} = 0$ but $
  \begin{pmatrix}
    \bm{c}^{(2)} & \bm{d}^{(2)}
  \end{pmatrix}
  \neq 0$, then we have
  \begin{equation}
    \label{eq:vl1max}
    v_1(\gamma g)^{-1} \geq \bm{c}^{(2)} Y_l \transpose{\bm{c}}^{(2)} + ( \bm{c}^{(2)} X_l + \bm{d}^{(2)}) Y_l^{-1} \transpose{( \bm{c}^{(2)} X_l + \bm{d}^{(2)})} \geq v_{l+1}(g)^{-1}
  \end{equation}
  since $g_l(g) \in \mathcal{D}_{n-l}$.
  We have $v_{l+1}^{-1} \geq a_l v_l^{-1} \gg a_l v_1^{-1}$, so $v_{l+1}^{-1} \geq v_1^{-1}$ for $a_l$ sufficiently large, and it follows that $v_1 \geq v_1(\gamma g)$.

  Now, if $l =n$ or if $\bm{c}^{(1)}$, $\bm{c}^{(2)}$, and $\bm{d}^{(2)}$ are all $0$, then we have $\bm{d}^{(1)} \neq 0$ and
  \begin{equation}
    \label{eq:v1max}
    v_1(\gamma g)^{-1} = \bm{d}^{(1)} \transpose{U}_l^{-1} V_l^{-1} U_l^{-1} \transpose{\bm{d}}^{(1)} \geq v_1^{-1}
  \end{equation}
  as $U_l V_l \transpose{U_l} \in \mathcal{D}_l'$.
  We have verified that for any $\gamma \in \Gamma$, $v_1 \leq v_1(\gamma g)$, which is the first condition defining the fundamental domain $\mathcal{D}_n$.   

  Restricting to $\gamma \in \Gamma_1$, which fixes $v_1(g)$, the same argument as above shows that $v_2(g) \geq v_2(\gamma g)$ for all $\gamma \in \Gamma_1$.
  Continuing this way, we find that the $v_j$, $1\leq j \leq l$ are all maximal (over $\Gamma_{j,2}$), and so, by the construction of $\mathcal{D}_n$, there is a $\gamma \in \Gamma_l$ with the form
  \begin{equation}
    \label{eq:unipotentgamma}
    \gamma =
    \begin{pmatrix}
      A & B \\
      0 & \transpose{A}^{-1}
    \end{pmatrix}
    ,
  \end{equation}
  where $A$ is upper-triangular unipotent (so $\gamma \in \Gamma_l$ for all $l$) such that $\gamma g \in \mathcal{D}_n$.
\end{proof}

\section{Proof of the main theorem}
\label{sec:mainproof}

In the following subsection we gather some technical lemmas regarding the height function needed in the proof of theorem \ref{theorem:thetasumbound2}, see section \ref{sec:mainproof1}.
This height function is motivated by the following corollary from \cite{MarklofWelsh2021a}.

\begin{corollary}
  \label{corollary:upperbound}
  For a Schwartz function $f\in\mathcal{S}(\mathbb{R}^n)$ and $(h, g) \in \tilde{\mathcal{D}}$, and $A > 0$, we have
  \begin{equation}
    \label{eq:thetadetbound}
    \Theta_f(h, g) \ll_{f,A} (\det Y)^{\frac{1}{4}}( 1 + \bm{x} Y \transpose{\!\bm{x}})^{-A}
  \end{equation}
 where
  \begin{equation}
    \label{eq:gXYkQ1}
    g =
    \begin{pmatrix}
      I & X \\
      0 & I 
    \end{pmatrix}
    \begin{pmatrix}
      Y^{\frac{1}{2}} & 0 \\
      0 & \transpose{Y}^{-\frac{1}{2}}
    \end{pmatrix}
    k(Q).
  \end{equation} 
\end{corollary}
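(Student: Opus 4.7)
The result is stated for $(h,g) \in \tilde{\mathcal{D}}$, so in particular $g \in \mathcal{D}_n$, $t = 0$, and $|x_j|, |y_j| \leq \frac{1}{2}$. My first step would be to strip off the unitary factor $k(Q)$ via the Iwasawa decomposition (\ref{eq:Iwasawa}): writing $g = g' k(Q)$ with $g'$ in the form appearing in (\ref{eq:gXYkQ1}) but without the $k(Q)$, the cocycle relation $R(g_1 g_2) = \rho(g_1, g_2) R(g_1) R(g_2)$ yields $|\Theta_f(h, g)| = |\Theta_{\tilde{f}}(h, g')|$ with $\tilde{f} = R(k(Q)) f$. Since $\mathrm{U}(n)$ is compact and $R(k(Q))$ preserves $\mathcal{S}(\mathbb{R}^n)$, the Schwartz seminorms of $\tilde{f}$ may be controlled uniformly in $Q$. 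Applying formula (\ref{eq:Thetaexample}) and observing that the phase factors have modulus one (as $t = 0$ and $\bm{x}, \bm{y}$ are real), I would then bound
\begin{equation*}
  |\Theta_f(h,g)| \leq (\det Y)^{\frac{1}{4}} \sum_{\bm{m} \in \mathbb{Z}^n} |\tilde{f}((\bm{m}+\bm{x}) Y^{\frac{1}{2}})|.
\end{equation*}

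For any $B > 0$, Schwartz decay of $\tilde{f}$ gives $|\tilde{f}(\bm{v})| \ll_B (1 + \bm{v}\transpose{\bm{v}})^{-B}$, and since $\|(\bm{m}+\bm{x})Y^{\frac{1}{2}}\|^2 = (\bm{m}+\bm{x}) Y \transpose{(\bm{m}+\bm{x})}$, the $\bm{m} = 0$ contribution is $\ll (1+\bm{x}Y\transpose{\bm{x}})^{-B}$, which is the desired bound on taking $B = A$. The substance of the proof is therefore to show that the tail $\sum_{\bm{m} \neq 0}$ is also $\ll (1+\bm{x}Y\transpose{\bm{x}})^{-A}$, and this is where the structure of the fundamental domain enters.

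For the tail I would apply the triangle inequality for the norm $\bm{v} \mapsto (\bm{v}Y\transpose{\bm{v}})^{\frac{1}{2}}$ in the form $\bm{x}Y\transpose{\bm{x}} \leq 2\bm{m}Y\transpose{\bm{m}} + 2(\bm{m}+\bm{x}) Y \transpose{(\bm{m}+\bm{x})}$, giving $1+\bm{x}Y\transpose{\bm{x}} \ll (1+\bm{m}Y\transpose{\bm{m}})(1+(\bm{m}+\bm{x})Y\transpose{(\bm{m}+\bm{x})})$. Combined with Schwartz decay of $\tilde{f}$ with exponent $B$, this reduces the tail to estimating
\begin{equation*}
  \sum_{\bm{m} \neq 0} (1 + \bm{m}Y\transpose{\bm{m}})^A \, (1 + (\bm{m}+\bm{x}) Y \transpose{(\bm{m}+\bm{x})})^{A-B}.
\end{equation*}
Iterating part (2) of proposition \ref{proposition:Dnproperties} gives $\bm{m}Y\transpose{\bm{m}} \asymp_n \sum_j v_j m_j^2$ and the analogous estimate with $m_j$ replaced by $m_j + x_j$. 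Since $|m_j + x_j| \geq \frac{1}{2}|m_j|$ whenever $m_j \neq 0$ (using $|x_j| \leq \frac{1}{2}$), and since $v_j \gg_n 1$ by part (1) of the proposition, I deduce that for every $\bm{m} \in \mathbb{Z}^n \setminus \{0\}$,
\begin{equation*}
  (\bm{m}+\bm{x})Y\transpose{(\bm{m}+\bm{x})} \gg_n \sum_j v_j m_j^2 \asymp_n \bm{m} Y \transpose{\bm{m}} \gg_n \|\bm{m}\|^2.
\end{equation*}
The tail is then dominated by $\sum_{\bm{m} \neq 0} (1 + \|\bm{m}\|^2)^{2A - B}$, which is $O(1)$ once $B$ is chosen larger than $2A + \frac{n}{2}$. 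The main obstacle in this plan is the simultaneous need to extract the factor $(1+\bm{x}Y\transpose{\bm{x}})^{-A}$ and to retain enough decay in $\bm{m}$ for convergence; this is resolved precisely by the quadratic-form estimate of proposition \ref{proposition:Dnproperties} and the uniform lower bound on the diagonal entries $v_j$ of $V$ in the fundamental domain.
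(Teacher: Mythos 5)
Your proof is correct, and it takes a genuinely different and more elementary route than the one the paper points to. The paper does not reprove the corollary; it cites the first paper in the series, where the bound is a byproduct of computing \emph{full asymptotics} of $\Theta_f$ in the cuspidal regions of $\Gamma\backslash G$. You instead give a direct termwise estimate: strip the $k(Q)$ factor via the cocycle relation (using compactness of $\mathrm{U}(n)$ to control the Schwartz seminorms of $R(k(Q))f$ uniformly), invoke formula (\ref{eq:Thetaexample}) and Schwartz decay to isolate the $\bm{m}=0$ term as the main contribution, and then control the tail $\bm{m}\neq 0$ by the approximate-orthogonality estimate of proposition \ref{proposition:Dnproperties}(2) together with the diagonal lower bound $v_j\gg_n 1$ from part (1) and $|x_j|\le\tfrac12$. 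The mechanism that makes the tail decay win over the extracted factor $(1+\bm{x}Y\transpose{\bm{x}})^{-A}$ is the inequality $(\bm m+\bm x)Y\transpose{(\bm m+\bm x)}\gg_n\bm mY\transpose{\bm m}\gg_n\|\bm m\|^2$, and your bookkeeping with $B > 2A+\tfrac n2$ is sound. What each approach buys: the full-asymptotics route of the earlier paper yields much more information (leading terms in the cusp, and uniformity needed when $f$ is only a sharp cutoff), whereas your argument is short, self-contained, and suffices for Schwartz $f$ --- which is exactly the generality in which this corollary is stated. Your proof also directly confirms the paper's remark that the only input from the fundamental domain is the conclusions of proposition \ref{proposition:Dnproperties}, since those are the only geometric facts you use.
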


We remark that in \cite{MarklofWelsh2021a} this is obtained as a consequence of full asymptotics of the theta function in the various cuspidal regions.
We also remark that in \cite{MarklofWelsh2021a} we use a slightly different fundamental domain, however an examination of the proof there shows that the fundamental domain can be replaced by any set satisfying the conclusions of proposition \ref{proposition:Dnproperties}.

\subsection{Heights and volumes}
\label{sec:heightsvolumes}

For a fixed $A > 0$ sufficiently large depending only on $n$, we define the function $D : \tilde{\Gamma} \backslash (H \rtimes G) \to \mathbb{R}_{> 0}$ by
\begin{equation}
  \label{eq:DAdef}
  D \left( \tilde{\Gamma} (h, g) \right) = \det Y (\gamma g) \left( 1 + \bm{x}(uh_\gamma h^{\gamma^{-1}}) Y(\gamma g) \transpose{ \bm{x}}(u h_\gamma h^{\gamma^{-1}} ) \right)^{-A}
\end{equation}
where $(uh_\gamma, \gamma) \in \tilde{\Gamma}$ is so that $(uh_\gamma,\gamma)(h, g) \in \tilde{\mathcal{D}}$.
Here we write $h \in H$ as $h = ( \bm{x}(h), \bm{y}(h), t(h) )$.
For completeness, in case there are more than one $(uh_\gamma, \gamma) \in \tilde{\Gamma}$ such that $(uh_\gamma, \gamma)(h, g) \in \tilde{\mathcal{D}}$, then we define $D\left( \tilde{\Gamma}(h, g) \right)$ to be the largest of the finite number of values (\ref{eq:DAdef}).
This point is not essential as these values are within constant multiples of each other; see the argument in lemma \ref{lemma:heightcontinuity} for how this can be proved.

We begin by analyzing the growth of the height function.
We let $\tilde{\mu}$ denote the Haar probability measure on $\tilde{\Gamma} \backslash (H \rtimes G)$, which is $\mu$, the Haar probability measure on $\Gamma \backslash G$, times the Lebesgue measure on the entries of $h = (\bm{x}, \bm{y}, t)$.

\begin{lemma}
  \label{lemma:volumecalculation}
  For $R \geq 1$ we have
  \begin{equation}
    \label{eq:secondvolumebound}
    \tilde{\mu} ( \{ \tilde{\Gamma} (h, g) \in \tilde{\Gamma} \backslash (H \rtimes G) : D( \tilde{\Gamma} (h, g)) \geq R  \} ) \ll R^{- \frac{n+2}{2}}
  \end{equation}
  with the implied constant depending only on $n$. 
\end{lemma}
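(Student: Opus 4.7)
The plan is to unfold the integral to the fundamental domain $\tilde{\mathcal{D}}$ and estimate it in Iwasawa coordinates. Since $\tilde{\Gamma}$ contains the central subgroup $\{(0,0,t) : t \in \mathbb{R}\}$ of $H$, the $t$-coordinate collapses, and (\ref{eq:tildeDdef}) gives $\tilde{\mathcal{D}} = \{(\bm{x},\bm{y},0) : |x_j|,|y_j| \leq \tfrac{1}{2}\} \times \mathcal{D}_n$, with $\tilde\mu$ restricting to $\dd \bm{x}\, \dd \bm{y}\, \dd \mu(g)$. On $\tilde{\mathcal{D}}$ we may take $(uh_\gamma,\gamma) = (e,e)$ in (\ref{eq:DAdef}), so
\begin{equation*}
D(h,g) = \det Y \cdot (1 + \bm{x} Y \transpose{\bm{x}})^{-A}.
\end{equation*}
In particular, $D \geq R$ forces $\det Y \geq R$, and $\bm{y}$ drops out of the estimate.

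With $g \in \mathcal{D}_n$ fixed I would first integrate in $\bm{x}$ and $\bm{y}$: the $\bm{y}$-integration contributes $O(1)$, while the constraint $\bm{x} Y \transpose{\bm{x}} \leq (\det Y / R)^{1/A}$ confines $\bm{x}$ to an ellipsoid of Lebesgue volume $\asymp_n (\det Y/R)^{n/(2A)}/\sqrt{\det Y}$. Choosing the constant $A > n$ (permitted by the hypothesis), this ellipsoid bound is $\leq 1$ whenever $\det Y \geq R \geq 1$, and we obtain
\begin{equation*}
\tilde\mu(\{D \geq R\}) \ll R^{-n/(2A)} \int_{\mathcal{D}_n,\, \det Y \geq R} (\det Y)^{n/(2A) - 1/2}\, \dd \mu(g).
\end{equation*}

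Next I would evaluate this integral in the Iwasawa coordinates (\ref{eq:Haarmeasure}). The inductive construction of $\mathcal{D}_n$ in section \ref{sec:fundamentaldomain} together with proposition \ref{proposition:Dnproperties} show that $Q$, $X$, and $U$ are bounded on $\mathcal{D}_n$, so these variables contribute only an $O(1)$ factor. Setting $\alpha = \tfrac{1}{2} - \tfrac{n}{2A} > 0$, the remaining integral is
\begin{equation*}
\int_{v_n \geq \sqrt{3}/2,\; v_j \geq (3/4) v_{j+1},\; \prod v_j \geq R}\; \prod_{j=1}^n v_j^{-n+j-2-\alpha}\, \dd v_j.
\end{equation*}
Changing variables to $u_n = v_n$ and $u_k = v_k/v_{k+1}$ for $k < n$ gives $v_j = \prod_{k \geq j} u_k$, $\det Y = \prod_k u_k^k$ and Jacobian $\prod_k u_k^{k-1}$. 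Setting $w_k = \log u_k$ reduces the estimate to a Laplace-type integral
\begin{equation*}
\int_{w_k \geq O(1),\; \sum_k k w_k \geq \log R + O(1)}\; \exp\!\Big(-\sum_{k=1}^n \gamma_k w_k\Big)\, \dd w, \qquad \gamma_k = \tfrac{k(2n+1-k)}{2} + k\alpha,
\end{equation*}
whose exponential decay rate is $\min_k \gamma_k/k = \gamma_n/n = \tfrac{n+1}{2} + \alpha$ (since $\gamma_k/k = \tfrac{2n+1-k}{2} + \alpha$ is decreasing). This gives $\ll R^{-(n+1)/2 - \alpha}$, and multiplying by the outer $R^{-n/(2A)}$ yields the claimed bound $R^{-(n+2)/2}$.

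The main obstacle is the bookkeeping in the $v$-integral: the change of variables $u_k = v_k/v_{k+1}$ and the resulting Laplace asymptotics must produce exactly the exponent $(n+2)/2$, which relies on the $R^{-n/(2A)}$ gain from the $\bm{x}$-ellipsoid volume cancelling the $R^{+n/(2A)}$ loss coming from $\alpha < 1/2$. A minor subtlety is that $D$ is defined via a maximum over possibly several $\tilde\gamma$ bringing $(h,g)$ into $\tilde{\mathcal{D}}$, but these values are comparable up to constants, so the unfolding introduces only an $O(1)$ factor that does not affect the final exponent.
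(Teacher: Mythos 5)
Your proof is correct and follows the paper's approach: unfold to $\tilde{\mathcal{D}}$, use proposition \ref{proposition:Dnproperties} to reduce the $\bm{x}$-integration to an ellipsoid-volume bound, and then estimate the remaining $V$-integral via a logarithmic change of variables and a Laplace-type bound. The only cosmetic difference is that the paper takes $s_j = \log(v_j/v_{j+1})$ for $j<n$ together with $s_n = \log\det Y$, making the constraint $\det Y \geq R$ coordinate-aligned, whereas you keep $w_n = \log v_n$ and treat the oblique constraint $\sum_k k w_k \geq \log R$ via the $\min_k \gamma_k/k$ computation; both routes yield the exponent $\tfrac{n+2}{2}$.
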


\begin{proof}
  We recall that $g \in \mathcal{D}_n$ is written as
  \begin{equation}
    \label{eq:gUVXQ}
    g =
    \begin{pmatrix}
      U & X\transpose{U}^{-1} \\
      0 & \transpose{U}^{-1} 
    \end{pmatrix}
    \begin{pmatrix}
      V^{\frac{1}{2}} & 0 \\
      0 & V^{-\frac{1}{2}}
    \end{pmatrix}
    k(Q)
  \end{equation}
  for $U$ upper-triangular unipotent, $X$ symmetric, $Q \in \mathrm{U}(n)$, and
  \begin{equation}
    \label{eq:Vrecal}
    V = V(g) =
    \begin{pmatrix}
      v_1 & \cdots & 0 \\
      \vdots & \ddots & \vdots \\
      0 & \cdots & v_n
    \end{pmatrix}
  \end{equation}
  positive diagonal.
  The Haar measure $\mu$ on $G$ is then proportional to Lebesgue measure with respect to the entries of $X$ and the off-diagonal entries of $U$, $\mathrm{U}(n)$-Haar measure on $Q$, and the measure given by
  \begin{equation}
    \label{eq:VHaarmeasure}
    v_1^{-n-1} v_2^{-n} \cdots v_n^{-2} \dd v_1 \dd v_2 \cdots \dd v_n
  \end{equation}
  on $V$.

  By proposition \ref{proposition:Dnproperties}, we observe that the set in (\ref{eq:secondvolumebound}) is contained in the set of $(h, g)$ satisfying $v_j \geq c v_{j+1}$ for all $1\leq j < n$ and some $c > 0$ in addition to $\det Y \geq R$ and $\bm{x} Y \transpose{\bm{x}} \leq R^{-\frac{1}{A}} (\det Y)^{\frac{1}{A}}$.
  Moreover, the variables $\bm{x}, \bm{y}, t$ as well as $U$, $X$ are constrained to compact sets, and so the measure of the set (\ref{eq:secondvolumebound}) is
  \begin{equation}
    \label{eq:secondvolumebound2}
    \ll R^{-\epsilon} \underset{ \substack{ v_j \geq c v_{j+1} \\ v_1 \cdots v_n \geq R}}{\int \cdots \int} v_1^{-n - \frac{3}{2} + \epsilon } v_2^{-n - \frac{1}{2} + \epsilon } \cdots v_n^{-\frac{5}{2} + \epsilon} \dd v_1 \dd v_2 \cdots \dd v_n,
  \end{equation}
  where $\epsilon = \frac{n}{2A}$.

  Changing variables $v_j = \exp(u_j)$, the integral in (\ref{eq:secondvolumebound2}) is
  \begin{multline}
    \label{eq:ujintegral}
    R^{-\epsilon} \underset{ \substack{ u_j - u_{j+1} \geq \log c \\ u_1 + \cdots + u_n \geq \log R}}{\int \cdots \int} \exp \big( -(n + \tfrac{1}{2} - \epsilon)u_1 - (n- \tfrac{1}{2} - \epsilon)u_2 \\
    - \cdots -(\tfrac{3}{2} - \epsilon) u_n \big) \dd u_1 \dd u_2 \cdots \dd u_n. 
  \end{multline}
  We now make the linear change of variables $s_j = u_j - u_{j+1}$ for $ j < n$ and $s_n = u_1 + \cdots + u_n$.
  This transformation has determinant $n$ and its inverse is given by
  \begin{equation}
    \label{eq:ujequals}
    u_j = - \frac{1}{n} \sum_{1\leq i < j} i s_i + \frac{1}{n} \sum_{j\leq i < n} (n-i) s_i + \frac{1}{n} s_n. 
  \end{equation}
  We find that the exponent in (\ref{eq:ujintegral}) is then
  \begin{equation}
    \label{eq:exponent}
    - \sum_{1\leq j \leq n} (n - j + \tfrac{3}{2} - \epsilon ) u_j = - \left(\frac{n+2}{2} - \epsilon\right) s_n - \sum_{1 \leq j < n} \frac{j(n-j)}{2} s_j. 
  \end{equation}
  As $ \frac{j(n-j)}{2} > 0$ for $j < n$, the bound (\ref{eq:secondvolumebound}) follows.
\end{proof}

Lemma  \ref{lemma:heightcontinuity} below contains a key estimate, establishing a kind of `uniform continuity' for $\log D$.
The proof of this lemma is the primary motivation for defining our new fundamental domain and studying the shape of its cusp in sections \ref{sec:fundamentaldomain} and \ref{sec:cuspshape}.
For the proof, we first establish a similar kind of `uniform continuity' for the functions $v_l(\Gamma_l g)$ and $v_{l+1}(\Gamma_l g)$ that are essential to section \ref{sec:cuspshape}. 

\begin{lemma}
  \label{lemma:vlcontinuity}
  Let $g, g_0 \in G$ with $|| g_0 - I|| \leq 1$, then
  \begin{equation}
    \label{eq:vlcontinuity}
    v_l(g) \asymp v_l(g g_0),\ v_l(\Gamma_l g) \asymp v_l(\Gamma_lg g_0),\ v_{l+1}( \Gamma_l g) \asymp v_{l+1} ( \Gamma_l g g_0)
  \end{equation}
  for all $1\leq l \leq n$ with implied constants depending only on $n$.
\end{lemma}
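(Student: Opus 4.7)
The plan is to reduce everything to the first assertion $v_l(g) \asymp v_l(g g_0)$, after which the statements involving $\Gamma_l$ follow by the simple observation that left-multiplication by $\Gamma_l$ commutes with right-multiplication by $g_0$. Specifically, once the first assertion is established uniformly in $g \in G$ and in $g_0$ in the compact set $\{g_0 \in G : \|g_0 - I\| \leq 1\}$, applying it with $g$ replaced by $\gamma g$ and taking the minimum (respectively maximum) over $\gamma \in \Gamma_l$ yields the other two via $\min f_1 \asymp \min f_2$ whenever $f_1 \asymp f_2$ with uniform constants.

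To prove $v_l(g) \asymp v_l(g g_0)$, I would first express $v_l(g)$ directly in terms of the matrix $Y(g)$. Writing $Y = U V \transpose{U}$ as in proposition \ref{proposition:Dnproperties} and splitting $U$ and $V$ into blocks of sizes $l-1$ and $n-l+1$, the upper-triangular unipotent structure of $U$ gives the identity
\begin{equation*}
  v_l(g) = \frac{\det Y_{[l:n]}}{\det Y_{[l+1:n]}},
\end{equation*}
where $Y_{[l:n]}$ denotes the lower-right $(n-l+1)\times(n-l+1)$ submatrix of $Y$ and $\det Y_{[n+1:n]} := 1$. Next, using the Iwasawa formula $Y(g)^{-1} = C\transpose{C} + D\transpose{D} = (C\mid D)\transpose{(C \mid D)}$ from (\ref{eq:XYQ}), and noting that $g \mapsto g g_0$ transforms the rows as $(C' \mid D') = (C \mid D) g_0$, one obtains
\begin{equation*}
  Y(g g_0)^{-1} = (C \mid D)\, g_0 \transpose{g_0}\, \transpose{(C \mid D)}.
\end{equation*}

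The set $\{g_0 \in G : \|g_0 - I\| \leq 1\}$ is compact in $G$, so the positive definite matrix $M = g_0 \transpose{g_0}$ satisfies $c I \leq M \leq C I$ in the Loewner order for constants $0 < c \leq C$ depending only on $n$. This yields $c\, Y(g)^{-1} \leq Y(g g_0)^{-1} \leq C\, Y(g)^{-1}$, and inversion gives $Y(g g_0) \asymp Y(g)$ in the same sense. Since the Loewner order restricts to principal submatrices and the determinant is monotone on positive definite matrices, $\det Y(g g_0)_{[l:n]} \asymp \det Y(g)_{[l:n]}$ for every $l$; taking the ratio concludes the first assertion, and the $\Gamma_l$-versions follow as indicated above. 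The only mildly delicate step is the Cholesky-type identity for $v_l$, which I expect to be the main technical point; everything else is a straightforward propagation of constants through the Loewner chain.
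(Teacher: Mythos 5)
Your proof is correct, and it takes a genuinely different route from the paper's. Both arguments start from the same basic formula $Y(g)^{-1}=C\transpose{C}+D\transpose{D}$ and the observation that passing from $g$ to $gg_0$ replaces $(C\mid D)$ by $(C\mid D)g_0$, but they diverge from there. The paper reduces to $\|g_0-I\|\leq\epsilon$ with $\epsilon$ small (iterating to get back to $1$), writes out $Y(gg_0)^{-1}$ in Iwasawa coordinates with $Q=R+\mathrm{i}S$, observes $\transpose{Y}(gg_0)^{-1/2}=\transpose{Y}^{-1/2}(I+O(\epsilon))$, and then extracts $v_l$ from ratios of wedge-product norms $\|\bm{y}_1\wedge\cdots\wedge\bm{y}_l\|$ of the rows of $\transpose{Y}^{-1/2}$. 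You instead use the Cholesky-type identity $v_l=\det Y_{[l:n]}/\det Y_{[l+1:n]}$ (which indeed holds since $\det(UV\transpose{U})_{[l:n]}=\det(U_{22}V_2\transpose{U}_{22})=v_l\cdots v_n$ for the lower-right $(n-l+1)\times(n-l+1)$ block), sandwich $g_0\transpose{g_0}$ between $cI$ and $CI$, and propagate the resulting Loewner bounds on $Y^{-1}$ through inversion, restriction to principal submatrices, and monotonicity of the determinant. This avoids both the $\epsilon$-reduction and the wedge-product bookkeeping, and is arguably cleaner; the wedge-product description is ultimately just another form of the same Cholesky identity. For the Loewner bounds you invoke compactness, which is fine, but one can also note directly that for $g_0\in\mathrm{Sp}(n,\mathbb{R})$ one has $\|g_0^{-1}\|=\|g_0\|$ (conjugating the transpose by the symplectic form $J$), so $\|g_0\|\leq 2$ already gives $\tfrac{1}{4}I\leq g_0\transpose{g_0}\leq 4I$. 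The reduction of the $\Gamma_l$ statements to the base case via uniform constants under $\min$ and $\max$ is the same as in the paper.
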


\begin{proof}
  We first note that we may in fact work with $||I - g_0|| \leq \epsilon$ as then the statement would follow by repeated application of the estimates.
  In fact, we may assume $|| I - g_0^{-1} || \leq \epsilon$ as well. 
  Now write
  \begin{equation}
    \label{eq:gIwasawa}
    g =
    \begin{pmatrix}
      I & X \\
      0 & I 
    \end{pmatrix}
    \begin{pmatrix}
      Y^{\frac{1}{2}} & 0 \\
      0 & \transpose{Y}^{-\frac{1}{2}}
    \end{pmatrix}
    \begin{pmatrix}
      R & -S \\
      S & R
    \end{pmatrix}
    ,
  \end{equation}
  with $R + \mathrm{i}S \in \mathrm{U}(n)$, so in particular $R\transpose{R} + S \transpose{S} = I$.
  With $g_0 =
  \begin{pmatrix}
    A & B \\
    C & D
  \end{pmatrix}
  $, we have from (\ref{eq:XYQ}) that
  \begin{multline}
    \label{eq:Ygg1}
    Y(gg_0)^{-1} = \transpose{Y}^{-\frac{1}{2}} \big( S A \transpose{A} \transpose{S} + R C \transpose{A} \transpose{S} + S A \transpose{C} \transpose{R} + R C \transpose{C} R \\
    + S B\transpose{B} \transpose{S} + R D \transpose{B} \transpose{S} + S B \transpose{D} \transpose{R} + R D \transpose{D} \transpose{R}\big) Y^{-\frac{1}{2}}. 
  \end{multline}
  As $|| g_0 - I || \leq \epsilon$, we have
  \begin{equation}
    \label{eq:Ygg1I}
    \transpose{Y}(gg_0)^{-\frac{1}{2}} = \transpose{Y}^{- \frac{1}{2}}( I + O (\epsilon)).
  \end{equation}
  On the other hand, letting $\bm{y}_j$ and $\bm{y}_J'$ denote the rows of $\transpose{Y}^{-\frac{1}{2}}$ and $ \transpose{Y}(g g_0)^{-\frac{1}{2}}$, we have
  \begin{equation}
    \label{eq:v1formula}
    v_1(g)^{-\frac{1}{2}} = || \bm{y}_1 ||,\quad v_1(g g_0)^{-\frac{1}{2}} = || \bm{y}_1'||
  \end{equation}
  and for $2 \leq l \leq n$,
  \begin{equation}
    \label{eq:vlformula}
    v_{l}(g)^{-\frac{1}{2}} = \frac{|| \bm{y}_1 \wedge \cdots \wedge \bm{y}_l ||}{|| \bm{y}_1 \wedge \cdots \wedge \bm{y}_{l-1}||},\quad v_{l}(gg_0)^{-\frac{1}{2}} = \frac{|| \bm{y}_1' \wedge \cdots \wedge \bm{y}_l' ||}{|| \bm{y}_1' \wedge \cdots \wedge \bm{y}_{l-1}'||},
  \end{equation}
  and so $v_{l}(g) \asymp v_l(g g_0)$ follows.
  
  Now let $\gamma \in \Gamma_l$ be so that $v_l(\Gamma_l g) = v_l(\gamma g)$.
  We have
  \begin{equation}
    \label{eq:vlGammalgg1}
    v_l(\Gamma_l g g_0) \leq v_l( \gamma g g_0) \ll v_l(\gamma g) = v_l(\Gamma_l g),
  \end{equation}
  and the reverse bound follows by switching the roles of $g$ and $g g_0$, and using $|| g_0^{-1} - I || \leq \epsilon$.
  The final estimate in (\ref{eq:vlcontinuity}) is proved in the same way. 
\end{proof}

\begin{lemma}
  \label{lemma:heightcontinuity}
  If $(h, g), (h_0, g_0) \in G$ with $|| g_0 - I || \leq 1$ and $h_0 = (\bm{x}_0, \bm{y}_0, t_0)$ satisfies $||\bm{x}_0||, || \bm{y}_0|| \leq 1$, then
  \begin{equation}
    \label{eq:heightcontinuity}
    D(\tilde{\Gamma}(h, g)) \asymp D(\tilde{\Gamma}(h, g)(h_0,g_0)).
  \end{equation}
\end{lemma}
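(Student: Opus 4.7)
My plan is to exhibit an element $\tilde{\gamma}_2 = (u_2 h_{\gamma_2}, \gamma_2) \in \tilde{\Gamma}$ taking $(h,g)(h_0,g_0)$ into $\tilde{\mathcal{D}}$ in a form closely related to a fixed $\tilde{\gamma}_1 = (u_1 h_{\gamma_1}, \gamma_1) \in \tilde{\Gamma}$ taking $(h,g)$ into $\tilde{\mathcal{D}}$, and then compare the two resulting values of $D$ factor by factor.

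Having fixed $\tilde{\gamma}_1$ with $\gamma_1 g \in \mathcal{D}_n$, I construct $\gamma_2$ using the cusp-shape analysis of section \ref{sec:cuspshape}. If $\gamma_1 g$ lies in a compact subset of $\mathcal{D}_n$, then standard reduction theory guarantees only finitely many $\gamma \in \Gamma$ can satisfy $\gamma \gamma_1 g g_0 \in \mathcal{D}_n$, and all such $\gamma$ are bounded. Otherwise, let $l$ be the largest index for which $v_l(\Gamma_l \gamma_1 g) \geq a_l v_{l+1}(\Gamma_l \gamma_1 g)$ (taking the condition to be $v_n(\Gamma_n \gamma_1 g) \geq a_n$ when $l = n$). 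Lemma \ref{lemma:vlcontinuity} shows this inequality persists after right-multiplication by $g_0$ with a possibly weakened constant, so proposition \ref{proposition:vllarge} supplies $\gamma' \in \Gamma_l$ with $\gamma' \gamma_1 g g_0 \in \mathcal{D}_n$; the final paragraph of its proof lets me take $\gamma' = \begin{pmatrix} A & B \\ 0 & \transpose{A}^{-1} \end{pmatrix}$ with $A$ upper-triangular unipotent integer, and only finitely many such $A$ can arise as Grenier-reducer of a bounded perturbation of the $\mathcal{D}_l'$-factor. I set $\gamma_2 = \gamma' \gamma_1$, and $u_2, h_{\gamma_2}$ are then determined so that $\tilde{\gamma}_2(h,g)(h_0,g_0) \in \tilde{\mathcal{D}}$, up to the ambiguity acknowledged in the definition of $D$.

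The two $\det Y$ factors agree up to constants by $\det Y = v_1 \cdots v_n$ together with the chain
\[
v_l(\gamma_2 g g_0) \asymp v_l(\Gamma_l g g_0) \asymp v_l(\Gamma_l g) \asymp v_l(\gamma_1 g)
\]
for each $1 \leq l \leq n$, via lemma \ref{lemma:Dminvl} at the ends and lemma \ref{lemma:vlcontinuity} in the middle.

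The main obstacle is the comparison of the factors $1 + \bm{x}_i Y_i \transpose{\bm{x}_i}$, and it is this step that motivates the cusp-shape analysis of section \ref{sec:cuspshape}. A direct block computation using $\gamma' = \begin{pmatrix} A & B \\ 0 & \transpose{A}^{-1} \end{pmatrix}$ gives $Y(\gamma_2 g g_0) = A \, Y(\gamma_1 g g_0) \, \transpose{A}$, and combined with the entrywise version of lemma \ref{lemma:vlcontinuity} this reduces the task to comparing $(\bm{x}_2 A) \, Y(\gamma_1 g) \, \transpose{(\bm{x}_2 A)}$ with $\bm{x}_1 \, Y(\gamma_1 g) \, \transpose{\bm{x}_1}$. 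Expanding $\bm{x}_2$ through (\ref{eq:Hmultiplication}), (\ref{eq:GHaction}), and $\gamma_2^{-1} = \gamma_1^{-1} \gamma'^{-1}$ exhibits $\bm{x}_2 A - \bm{x}_1$ as the sum of an integer vector $\bm{m}$, bounded half-integer shifts coming from $h_{\gamma_1}, h_{\gamma_2}$, and the $\bm{x}$-part of $h_0^{(\gamma_2 g)^{-1}}$, the last of which has norm $O(\|Y(\gamma_2 g)^{-1/2}\|) = O(1)$ by proposition \ref{proposition:Dnproperties}(1) (which ensures $\lambda_{\min}(Y) \gg 1$ on $\mathcal{D}_n$). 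Since $A$ lies in a finite set and both $\bm{x}_1, \bm{x}_2 A$ lie in bounded fundamental parallelepipeds of $\mathbb{Z}^n$, the integer shift $\bm{m}$ is itself bounded. The delicate final task, for which the max-convention in the definition of $D$ is essential, is to verify that the minimum of $\bm{x} \, Y(\gamma_1 g) \, \transpose{\bm{x}}$ over the valid $\mathbb{Z}^n$-representatives giving $\tilde{\Gamma}(h, g)$ in $\tilde{\mathcal{D}}$ differs from its counterpart for $\tilde{\Gamma}(h, g)(h_0, g_0)$ by an additive $O(1)$; this is absorbed by the $1 +$ in $D$, yielding the required multiplicative comparability and completing the proof.
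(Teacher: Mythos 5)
Your overall strategy, and the auxiliary ingredients (lemma \ref{lemma:vlcontinuity}, lemma \ref{lemma:Dminvl}, proposition \ref{proposition:vllarge}) match the paper's. The $\det Y$ comparison via the chain of $v_l$-estimates is sound, modulo the minor point that lemma \ref{lemma:Dminvl} is stated only for $l < n$ (its proof extends to $l = n$ using proposition \ref{proposition:Dnproperties}). The genuine gap is in the comparison of the $1 + \bm{x}Y\transpose{\!\bm{x}}$ factors. You reduce to showing that $\bm{x}_1 Y(\gamma_1 g)\transpose{\!\bm{x}}_1$ and $(\bm{x}_2 A)Y(\gamma_1 g)\transpose{(\bm{x}_2 A)}$ differ by an additive $O(1)$, having shown only that $\bm{x}_2 A - \bm{x}_1 = \bm{m} + \bm{\rho} + \bm{\eta}$ with $\bm{m}$ a bounded integer vector, $\bm{\rho}$ a bounded half-integer vector, and $\|\bm{\eta}\| = O(1)$. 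But an $O(1)$ perturbation of the argument vector does \emph{not} give an $O(1)$ perturbation of the value of the quadratic form when $Y$ has a large eigenvalue: with $Y = \mathrm{diag}(v_1, 1, \dots, 1)$, $\bm{x}_1 = 0$, and $\bm{m} = (1, 0, \dots, 0)$, the values differ by roughly $v_1$, and $(1 + V_2)/(1 + V_1) \approx 1 + v_1$ is unbounded. So the step ``absorbed by the $1+$'' fails.

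The paper closes this via more precise structural facts which your argument does not invoke. First, rather than comparing the two minimizers directly, it proves the one-sided bound $1 + \bm{x}(u h_\gamma (h h_0^{g^{-1}})^{\gamma^{-1}}) Y(\gamma g g_0) \transpose{\bm{x}}(\cdots) \ll 1 + \bm{x}(h)Y(g)\transpose{\!\bm{x}(h)}$ and gets $\asymp$ by symmetry; the integer shift $u$ is then harmless because dropping it only enlarges the quadratic form via proposition \ref{proposition:Dnproperties}(2). Second, the half-integer shift vanishes in the first $l$ coordinates, $\bm{r}^{(1)} = 0$, since $\gamma \in \Gamma_l$ has vanishing lower-left $l$-rows and hence the relevant diagonal entries of $C\transpose{\!D}$ are even. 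Third, the $A_1$-conjugation cancels exactly as in (\ref{eq:AUVUA}), contributing no cross terms. Fourth, and most crucially, the $h_0$-shift is controlled in the $Y$-norm rather than the Euclidean norm: writing $h_0' = h_0^{k(g)^{-1}}$, one has $\bm{x}(h_0^{g^{-1}})Y(g)\transpose{\bm{x}(h_0^{g^{-1}})} = \|\bm{x}(h_0')\|^2 = O(\epsilon^2)$, which via the Cauchy--Schwarz estimate (\ref{eq:cauchy}) gives a \emph{multiplicative} error $1 + O(\epsilon)$. Without (iv) your additive claim cannot hold, and without (i)--(iii) the contributions of $\bm{m}$ and $\bm{\rho}$ in the dominant ($v_1$-weighted) coordinates remain uncontrolled; the final step of your argument therefore does not close as written.
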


\begin{proof}
  We observe as in lemma \ref{lemma:vlcontinuity}, we may in fact assume
  \begin{equation}
    \label{eq:epsilonbounds}
    ||g_0 - I||\leq \epsilon,\ ||\bm{x}_0|| \leq \epsilon,\ \mathrm{and\ } ||\bm{y}_0 || \leq \epsilon.
  \end{equation}
  Moreover, it suffices to show that $D(\tilde{\Gamma} (h, g)(h_0,g_0)) \gg D(\tilde{\Gamma} (h, g))$ as the other inequality follows from switching $(h, g)$ and $(h, g)(h_0, g_0)$ as we may assume in addition that $(h_0, g_0)^{-1} = (h_0^{-g_0}, g_0^{-1})$ also satisfies (\ref{eq:epsilonbounds}). 

  Now let us suppose that $(h, g) \in \tilde{\mathcal{D}}$ so that
  \begin{equation}
    \label{eq:Dhg}
    D( \tilde{\Gamma}(h, g)) = (\det Y(g))( 1 + \bm{x}(h) Y(g) \transpose{\bm{x}(h)})^{-A}.
  \end{equation}
  Let $1\leq l \leq n$ be the largest index such that $v_l(g) \geq a v_{l+1} (g)$ (or $v_n(g) \geq a$ when $l = n$) where $a$ is a constant determined by the constants in proposition \ref{proposition:vllarge} and lemma \ref{lemma:vlcontinuity}.
  If no such $l$ exists, then we have $v_j(g) \asymp 1$ for all $j$, and lemma \ref{lemma:vlcontinuity} implies that $v_j(g g_0) \asymp 1$ as well.
  The bounds
  \begin{equation}
    \label{eq:compactbound}
    D(\tilde{\Gamma}(h, g)(h_0, g_0)) \gg 1 \gg D(\tilde{\Gamma}(h, g))
  \end{equation}
  then follow immediately.

  Now assuming that such a maximal $l$ exists, we have that $v_j(g) \asymp 1$ for all $j > l$.
  For these $j$, lemma \ref{lemma:vlcontinuity} then implies that $v_j(g g_0) \asymp 1$, and it follows that $v_j(\gamma g g_0) \asymp 1$ for $\gamma \in \Gamma_l$ such that $g_l(\gamma g g_0) \in \mathcal{D}_{n-l}$, see (\ref{eq:gldef}).
  By lemma \ref{lemma:Dminvl}, we have $v_l(\Gamma_l g) \gg v_l(g)$, and so
  \begin{equation}
    \label{eq:vl1g1}
    v_l(\Gamma_l g) \gg a v_{l+1}(g) = a v_{l+1} (\Gamma_l g)
  \end{equation}
  since $g_l(g) \in \mathcal{D}_{n-l}$.
  Via lemma \ref{lemma:vlcontinuity}, this implies that $v_l(\Gamma_l g g_0) \gg a v_{l+1}(\Gamma_l g g_0)$, so $a$ can be chosen large enough so that $g g_0$ satisfies the hypotheses of proposition \ref{proposition:vllarge}, and we let $\gamma \in \Gamma_l$ be so that $\gamma g g_0 \in \mathcal{D}$.

  We write
  \begin{equation}
    \label{eq:A1def}
    \gamma =
    \begin{pmatrix}
      A_1 & * & * & * \\
      0 & * & * & * \\
      0 & 0 & * & 0 \\
      0 & * & * & *
    \end{pmatrix}
    ,
  \end{equation}
  where $A_1 \in \mathrm{GL}(l, \mathbb{Z})$.
  From the estimates above, we have
  \begin{multline}
    \label{eq:detYgammag2}
    \det Y(\gamma g g_0) \asymp \det U_l(\gamma g g_0)V_l(\gamma g g_0)\transpose{U_l}(\gamma g g_0) = \det U_l(g g_0)V_l(g g_0)\transpose{U_l}(g g_0) \\
    \asymp \det U_l(g) V_l(g) \transpose{U}_l(g) \asymp \det Y(g),
  \end{multline}
  where the equality follows from the fact that $\gamma \in \Gamma_l$ normalizes the first matrix in (\ref{eq:Pjdecomp}) and $\det A_1 = \pm 1$.

  It now remains to consider the factors $1 + \bm{x}(*)Y(*) \transpose{\bm{x}(*)}$ in the definition of the height function $D$.
  Let $u = (\bm{m}, \bm{n}, 0)$ with $\bm{m}, \bm{n} \in \mathbb{Z}^n$ be so that $(uh_\gamma, \gamma)(h,g)(h_0, g_0) \in \tilde{\mathcal{D}}$. 
  Recalling the definition of $h_\gamma = (\bm{r}, \bm{s}, 0)$ following (\ref{eq:ABCDdef2}), we have that $\bm{r}^{(1)} = 0$ where $\bm{r} =
  \begin{pmatrix}
    \bm{r}^{(1)} & \bm{r}^{(2)}
  \end{pmatrix}
  $.
  Moreover, writing $\bm{x} =
  \begin{pmatrix}
    \bm{x}^{(1)} & \bm{x}^{(2)}
  \end{pmatrix}
  $, we have $\bm{x}^{(1)}( (h h_0^{g^{-1}})^{\gamma^{-1}}) = \bm{x}^{(1)}(h h_0^{g^{-1}}) A_1^{-1}$.
  Using proposition \ref{proposition:Dnproperties} together with the fact that $u$ minimizes the absolute values of the entries of $\bm{x}(u h_\gamma (h h_0^{g^{-1}})^{\gamma^{-1}})$, we have
  \begin{multline}
    \label{eq:xUVUx}
    1 + \bm{x}( u h_\gamma (h h_0^{g^{-1}})^{\gamma^{-1}}) Y(\gamma g g_0) \transpose{\bm{x}}(u h_\gamma (h h_0^{g^{-1}})^{\gamma^{-1}}) \\
    \ll 1 + \bm{x}( h_\gamma (h h_0^{g^{-1}})^{\gamma^{-1}}) Y(\gamma g g_0) \transpose{\bm{x}}(h_\gamma (h h_0^{g^{-1}})^{\gamma^{-1}}),
  \end{multline}
  and from the estimates above on the $v_j(\gamma g g_0)$ for $j > l$, we have
  \begin{multline}
    \label{eq:xUVUx1}
    1 + \bm{x}( h_\gamma (h h_0^{g^{-1}})^{\gamma^{-1}}) Y(\gamma g g_0) \transpose{\bm{x}}(h_\gamma (h h_0^{g^{-1}})^{\gamma^{-1}}) \\
    \asymp 1 + \bm{x}^{(1)}( h_\gamma (h h_0^{g^{-1}})^{\gamma^{-1}}) U_l(\gamma g g_0) V_l(\gamma g g_0) \transpose{U_l}(\gamma g g_0)\transpose{\bm{x}}(h_\gamma (h h_0^{g^{-1}})^{\gamma^{-1}}).
  \end{multline}
  Using the expressions for $h_\gamma$, $(h h_0^{g^{-1}})^{\gamma^{-1}}$, and that
  \begin{equation}
    \label{eq:AUVUA}
    U_l(\gamma g g_0) V_l(\gamma g g_0) \transpose{U_l}(\gamma g g_0) = A_1 U_l(g g_0) V_l(g g_0) \transpose{U_l}(g g_0)\transpose{A}_1,
  \end{equation}
  the right side of (\ref{eq:xUVUx1}) is equal to
  \begin{equation}
    \label{eq:xUVUxg2}
    1 + \bm{x}^{(1)}(h h_0^{g^{-1}}) U_l( g g_0) V_l(g g_0) \transpose{U}_l(g g_0) \transpose{\bm{x}}^{(1)}( h h_0^{g^{-1}}) \asymp 1 + \bm{x}(h h_0^{g^{-1}}) Y(g g_0) \transpose{\bm{x}}(h h_0^{g^{-1}})
  \end{equation}
  by the above bounds on $v_j(g g_0)$ for $j > l$. 

  Recalling that
  \begin{equation}
    \label{eq:g1Iwasawa}
    g =
    \begin{pmatrix}
      I & X(g) \\
      0 & I
    \end{pmatrix}
    \begin{pmatrix}
      Y(g)^{\frac{1}{2}} & 0 \\
      0 & \transpose{Y}(g)^{-\frac{1}{2}}
    \end{pmatrix}
    k(g)
  \end{equation}
  with $k(g) \in K = G \cap \mathrm{SO}(2n, \mathbb{R})$, we set $h_0' = h_0^{k(g)^{-1}}$ and note that
  \begin{equation}
    \label{eq:korthogonal}
    ||\bm{x}(h_0')||^2 + ||\bm{y}(h_0')||^2 = ||\bm{x}(h_0)||^2 + ||\bm{y}(h_0)||^2.
  \end{equation}
  Since $Y( g g_0) = Y(g)^{\frac{1}{2}} Y(k(g)g_0) \transpose{Y}(g)^{\frac{1}{2}}$ and $\bm{x}(h h_0^{g^{-1}}) = \bm{x}(h) + \bm{x}(h_0') Y(g)^{-\frac{1}{2}}$, the right side of (\ref{eq:xUVUxg2}) is equal to
  \begin{multline}
    \label{eq:g1xYx}
    1 + \bm{x}(h) Y(g)^{\frac{1}{2}} Y(k(g) g_0) \transpose{Y}(g)^{-\frac{1}{2}} \transpose{\bm{x}}(h) \\
    + 2 \bm{x}(h) Y(g)^{\frac{1}{2}}Y(k(g) g_0)\transpose{\bm{x}}(h_0') + \bm{x}(h_0') Y(k(g)g_0) \transpose{\bm{x}}(h_0'). 
  \end{multline}
  We have that $|| g_0 - I || \leq \epsilon$ implies $Y(k(g) g_0) = I + O(\epsilon)$ as in (\ref{eq:Ygg1}), so if (\ref{eq:korthogonal}) is at most $\epsilon^2$ as well, with $\epsilon$ sufficiently small, then (\ref{eq:g1xYx}) is
  \begin{equation}
    \label{eq:g1xYxbound}
    \asymp 1 + \bm{x}(h) Y(g) \transpose{\bm{x}}(h),
  \end{equation}
  where we have used
  \begin{multline}
    \label{eq:cauchy}
    2| \bm{x}(h) Y(g)^{\frac{1}{2}}Y(k(g) g_0)\transpose{\bm{x}}(h_0') | \\
    \leq \sqrt{ \bm{x}(h_0') Y(k(g) g_0)^2 \transpose{\bm{x}}(h_0')} \left(\bm{x}(h) Y (g)\transpose{\bm{x}}(h) + 1\right) \ll \epsilon \left(\bm{x}(h) Y (g)\transpose{\bm{x}}(h) + 1\right)
  \end{multline}
  to bound the third term in (\ref{eq:g1xYx}).
  The bound $D(\tilde{\Gamma}(h, g)(h_0, g_0)) \gg D( \tilde{\Gamma}(h, g)$ now follows.
\end{proof}

\subsection{Proof of theorem \ref{theorem:thetasumbound2}}
\label{sec:mainproof1}

We recall the following lemma from \cite{MarklofWelsh2021a}.

\begin{lemma}
  \label{lemma:cutoff}
  There exists a smooth, compactly supported function $f_1 :  \mathbb{R} \to \mathbb{R}_{\geq 0}$ such that
  \begin{equation}
    \label{eq:chidecomp}
    \chi_1(x) = \sum_{j \geq 0} \left( f_1\left( 2^j x\right) +  f_1\left( 2^j( 1 -x)\right)\right),
  \end{equation}
  where $\chi_1$ is the indicator function of the open unit interval $(0,1)$. 
\end{lemma}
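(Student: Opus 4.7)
The strategy is to realise $f_1$ as a telescoping difference $f_1(x) = \phi(x) - \phi(2x)$ for a smooth cutoff $\phi$ chosen with enough symmetry that the two dyadic sums in the statement combine to the constant $1$ on the interval $(0,1)$.

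First I would fix a smooth non-increasing function $\phi : \mathbb{R} \to [0,1]$ satisfying $\phi(x) = 1$ for $x \le \tfrac{1}{4}$, $\phi(x) = 0$ for $x \ge \tfrac{3}{4}$, and the partition-of-unity symmetry
\[
\phi(x) + \phi(1-x) = 1 \qquad \text{for all } x \in \mathbb{R}.
\]
Such a $\phi$ is easy to construct: take any smooth odd, non-decreasing $\psi : \mathbb{R} \to [-\tfrac{1}{2}, \tfrac{1}{2}]$ with $\psi(t) = -\tfrac{1}{2}$ for $t \le -\tfrac{1}{4}$ and $\psi(t) = \tfrac{1}{2}$ for $t \ge \tfrac{1}{4}$ (a mollification of a step function), then set $\phi(x) = \tfrac{1}{2} - \psi(x - \tfrac{1}{2})$. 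The oddness of $\psi$ gives the symmetry for free.

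Second, define $f_1(x) = \phi(x) - \phi(2x)$. This is smooth (difference of smooth functions), supported in $[\tfrac{1}{8}, \tfrac{3}{4}]$ (for $x \le \tfrac{1}{8}$ both $\phi(x)$ and $\phi(2x)$ equal $1$, and for $x \ge \tfrac{3}{4}$ both equal $0$), and non-negative because $\phi$ is non-increasing on $[0, \infty)$ while both $\phi(x), \phi(2x) \in \{1\}$ on $(-\infty, 0]$.

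Third I would verify the identity by a telescoping computation. For any $x \in \mathbb{R}$ and $N \ge 0$,
\[
\sum_{j=0}^{N} f_1(2^j x) \;=\; \phi(x) - \phi(2^{N+1} x).
\]
For $x > 0$ the argument $2^{N+1} x$ tends to $+\infty$, so $\phi(2^{N+1}x) \to 0$ and the series equals $\phi(x)$; for $x \le 0$ every term already vanishes (since both $2^j x$ and $2^{j+1} x$ lie in the region where $\phi \equiv 1$). Applying the same telescoping to $1 - x$ yields $\sum_{j \ge 0} f_1(2^j(1-x)) = \phi(1-x)$ for $x < 1$ and $0$ for $x \ge 1$.

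Fourth, combining the two series on $(0,1)$ gives $\phi(x) + \phi(1-x) = 1 = \chi_1(x)$ by the built-in symmetry. Outside $(0,1)$, both series vanish: for $x \le 0$ the first sum is identically zero and the shifted argument $1 - x \ge 1 > \tfrac{3}{4}$ forces the second to vanish term-by-term; the case $x \ge 1$ is symmetric. This proves the stated identity, with $f_1$ smooth, non-negative, and compactly supported. The construction is essentially routine — the only genuinely substantive point is enforcing $\phi(x) + \phi(1-x) = 1$, which is precisely what makes the two symmetric telescoping sums patch together into an exact indicator.
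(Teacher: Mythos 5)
Your construction is correct and complete. The paper itself does not prove the lemma — it simply cites part I of the series (\cite{MarklofWelsh2021a}) — so there is no in-paper argument to compare against, but your telescoping construction $f_1(x)=\phi(x)-\phi(2x)$ with a symmetric cutoff $\phi(x)+\phi(1-x)=1$ is the standard way to prove it, and you have checked all the required points: smoothness and compact support of $f_1$ (in $[\tfrac18,\tfrac34]$), non-negativity from monotonicity of $\phi$, pointwise convergence of each dyadic series to $\phi(x)$ (resp.\ $\phi(1-x)$) for $x>0$ (resp.\ $x<1$), vanishing of both series outside $(0,1)$, and the patching identity $\phi(x)+\phi(1-x)=1$ on $(0,1)$. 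The one thing worth emphasizing — which you do flag as the substantive point — is precisely the need to build the symmetry $\phi(x)+\phi(1-x)=1$ into $\phi$; your antisymmetrization via an odd $\psi$ handles this cleanly. No gaps.
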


Now, following the method of \cite{MarklofWelsh2021a}, we define for a subset $S \subset \{1, \dots, n\}$ and $\bm{j} = (j_1, \dots, j_n) \in \mathbb{Z}^n$ with $j_i \geq 0$,
\begin{equation}
  \label{eq:gjSdef}
  g_{\bm{j}, S} =
  \begin{pmatrix}
    A_{\bm{j}} E_S & 0 \\
    0 & A_{\bm{j}}^{-1} E_S
  \end{pmatrix}
  \in G
\end{equation}
where $E_S$ is diagonal with $(i,i)$ entry $-1$ if $i \in S$, $+1$ if $i \not\in S$, and
\begin{equation}
  \label{eq:Ajdef}
  A_{\bm{j}} =
  \begin{pmatrix}
    2^{j_1} & \cdots & 0 \\
    \vdots & \ddots & \vdots \\
    0 & \cdots & 2^{j_n}
  \end{pmatrix}
  . 
\end{equation}
We also set $h_S = (\bm{x}_S, 0, 0) \in H$ where $\bm{x}_S$ has $i$th entry $-1$ if $i \in S$ and $0$ if $i \not\in S$.

As in \cite{MarklofWelsh2021a}, we have
\begin{equation}
  \label{eq:chindecomp}
  \chi_{\mathcal{B}}(\bm{x}) = \sum_{\bm{j}\geq 0} \sum_{S \subset \{1, \dots, n\}} f_n\left( (\bm{x}B^{-1} + \bm{x}_S)A_{\bm{j}} E_S \right),
\end{equation}
where $\chi_{\mathcal{B}}$ is the indicator function of the rectangular box $\mathcal{B} = (0, b_1) \times \cdots \times (0, b_n)$, $B$ is the diagonal matrix with entries $b_1, \dots, b_n$, 
\begin{equation}
  \label{eq:fndef}
  f_n(x_1, \dots, x_n) = \prod_{1\leq j \leq n} f_1(x_j),
\end{equation}
and the sums are over $\bm{j} \in \mathbb{Z}^n$ with nonnegative entries.

Let $\psi: [0,\infty) \to [1, \infty)$ be an increasing function.
Then for $C > 0$ we define $\mathcal{G}_{\bm{j}}(\psi, C)$ to be the set of $\tilde{\Gamma} (h, g) \in \tilde{\Gamma} \backslash (H \rtimes G)$ such that
\begin{equation}
  \label{eq:GjpsiCdef}
  D\big( \tilde{\Gamma} (h, g) ( 1, 
  \begin{pmatrix}
    \e^{-s}I & 0 \\
    0 & \e^s I
  \end{pmatrix}
  )
  (h_S, g_{\bm{j}, S}) \big)^{\frac{1}{4}} \leq C \psi(s)
\end{equation}
for all $S \subset \{1, \dots, n\}$ and $s\geq 1$. 

\begin{lemma}
  \label{lemma:psivolumebound}
  Suppose that $\psi$ satisfies
  \begin{equation}
    \label{eq:psicondition}
    \int_0^\infty \psi(x)^{-(2n + 4)} \dd x \leq C_\psi
  \end{equation}
  for some $C_\psi \geq 1$. Then
  \begin{equation}
    \label{eq:Gjvolumebound}
    \tilde{\mu} \left( \tilde{\Gamma} \backslash (H \rtimes G) - \mathcal{G}_{\bm{j}} (\psi, C) \right) \ll C_\psi C^{-( 2n + 4)} 2^{j_1 + \cdots + j_n}.
  \end{equation}
\end{lemma}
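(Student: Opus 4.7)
The plan is a union-bound / Borel--Cantelli argument combining Lemmas \ref{lemma:volumecalculation} and \ref{lemma:heightcontinuity} with the right-invariance of the measure $\tilde{\mu}$ on $\tilde{\Gamma}\backslash (H\rtimes G)$, which holds because $H\rtimes G$ is unimodular (the group $G=\mathrm{Sp}(n,\mathbb{R})$ is semisimple and its action (\ref{eq:GHaction}) on $H$ preserves Lebesgue measure). First I would observe that the complement of $\mathcal{G}_{\bm{j}}(\psi,C)$ is contained in $\bigcup_S \bigcup_{k\geq 1}\mathcal{B}_{S,k}$, where $\mathcal{B}_{S,k}$ denotes the set of $\tilde{\Gamma}(h,g)$ for which the inequality in (\ref{eq:GjpsiCdef}) fails at some $s\in[k,k+1]$, and then bound each $\tilde{\mu}(\mathcal{B}_{S,k})$ separately.

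Fix $S$ and $k$. A direct computation using $(h_1,g_1)(h_2,g_2)=(h_1 h_2^{g_1^{-1}},g_1g_2)$ together with the fact that the diagonal matrix $a_\delta = \bigl(\begin{smallmatrix}\e^{-\delta}I & 0 \\ 0 & \e^\delta I\end{smallmatrix}\bigr)$ commutes with $g_{\bm{j},S}$ gives
\[
(h_S,g_{\bm{j},S})^{-1}(1,a_\delta)(h_S,g_{\bm{j},S}) = \bigl(((\e^\delta-1)\bm{x}_SA_{\bm{j}}E_S,\,0,\,0),\,a_\delta\bigr).
\]
The vector $\bm{x}_SA_{\bm{j}}E_S$ has entries $2^{j_i}$ for $i\in S$ and $0$ otherwise, so its norm is $\asymp 2^{N_S}$ with $N_S:=\max_{i\in S}j_i$ (with $N_\emptyset:=0$). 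Choose $\delta_S = c\,2^{-N_S}$ for an absolute constant $c>0$ small enough. For every $\delta\in[0,\delta_S]$ one has $\|(\e^\delta-1)\bm{x}_SA_{\bm{j}}E_S\|\ll c\leq 1$ and $\|a_\delta-I\|\ll c\leq 1$, so Lemma \ref{lemma:heightcontinuity} yields $D(\tilde{\Gamma}(h,g)(1,a_s)(h_S,g_{\bm{j},S}))\asymp D(\tilde{\Gamma}(h,g)(1,a_{s_0})(h_S,g_{\bm{j},S}))$ uniformly in $s\in[s_0,s_0+\delta_S]$.

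Covering $[k,k+1]$ by $O(2^{N_S})$ sub-intervals of length $\delta_S$ with sample points $s_{k,m}$, and using that $\psi$ is increasing, $\mathcal{B}_{S,k}$ is contained in the union over $m$ of sets of the form $\{\tilde{\Gamma}(h,g):D(\tilde{\Gamma}(h,g)(1,a_{s_{k,m}})(h_S,g_{\bm{j},S}))^{1/4}\geq c'C\psi(k)\}$ for some fixed $c'>0$. Right-invariance of $\tilde{\mu}$ identifies each such set's measure with $\tilde{\mu}\{D(\tilde{\Gamma}(h,g))^{1/4}\geq c'C\psi(k)\}$, which Lemma \ref{lemma:volumecalculation} bounds by $\ll (C\psi(k))^{-(2n+4)}$. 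Hence $\tilde{\mu}(\mathcal{B}_{S,k})\ll 2^{N_S}(C\psi(k))^{-(2n+4)}$. Summing, the hypothesis (\ref{eq:psicondition}) and the monotonicity of $\psi$ give $\sum_{k\geq 1}\psi(k)^{-(2n+4)}\ll C_\psi$, and $\sum_S 2^{N_S}\leq 2^n\cdot 2^{j_1+\cdots+j_n}$ produces the stated bound.

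The main obstacle is the second paragraph: getting the correct dependence on $\bm{j}$ forces the explicit identification of the commutator above and the verification that both its $H$-component and its $G$-component are small enough to invoke Lemma \ref{lemma:heightcontinuity}. It is precisely the non-trivial $H$-component — present because $h_S\neq 0$ when $S\neq\emptyset$ and $(h_S, g_{\bm{j},S})$ does not lie in the diagonal subgroup of $H\rtimes G$ — that couples the continuity step to the combinatorial data and is responsible for the factor $2^{j_1+\cdots+j_n}$ in the final bound.
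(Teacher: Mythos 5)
Your proposal is correct and follows essentially the same strategy as the paper: rewrite $(1,a_s)(h_S,g_{\bm{j},S})$ as a reference point $(1,a_{s_0})(h_S,g_{\bm{j},S})$ times a small perturbation $(h_1,g_1)$ (the same commutator identity, with the $H$-component $((\e^{s'}-1)\bm{x}_SA_{\bm{j}}E_S,0,0)$), invoke Lemma \ref{lemma:heightcontinuity} to reduce the continuous $s$-parameter to a discrete set of sample points, apply Lemma \ref{lemma:volumecalculation} together with right-invariance of $\tilde{\mu}$ at each sample point, and sum. The only cosmetic difference is that you adapt the discretization scale to each $S$ (taking $\delta_S=c\,2^{-N_S}$, giving $\asymp 2^{N_S}$ sample points per unit $s$-interval and then summing $\sum_S 2^{N_S}\leq 2^n\,2^{j_1+\cdots+j_n}$), whereas the paper uses the single uniform scale $K_{\bm{j}}^{-1}$ with $K_{\bm{j}}=K\,2^{j_1+\cdots+j_n}$ for all $S$; both produce the same final factor $2^{j_1+\cdots+j_n}$, so the two are equivalent up to constants.
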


\begin{proof}
  Suppose that $\tilde{\Gamma}(h, g) \not\in \mathcal{G}_{\bm{j}}( \psi, C)$, so there exists $S \subset \{1,\dots, n\}$ and $s\geq 1$ such that
  \begin{equation}
    \label{eq:DAlarge}
    D \big( \tilde{\Gamma}( h, g) (1,
    \begin{pmatrix}
      \e^{-s} I & 0 \\
      0 & \e^s I 
    \end{pmatrix}
    ) ( h_S, g_{\bm{j}, S}) \big)^{\frac{1}{4}} \geq C\psi(s).
  \end{equation}
  We let $k$ be a nonnegative integer such that
  \begin{equation}
    \label{eq:kdef}
    \frac{k}{K_{\bm{j}}} \leq s < \frac{k + 1}{K_{\bm{j}}},
  \end{equation}
  where $K_{\bm{j}} = K 2^{j_1 + \cdots + j_n}$ with $K$ a constant to be determined.
  We have
  \begin{equation}
    \label{eq:h1g1def}
    (1,
    \begin{pmatrix}
      \e^{-s} I & 0 \\
      0 & \e^s I 
    \end{pmatrix}
    ) (h_S, g_{\bm{j}, S}) = (1,
    \begin{pmatrix}
      \e^{-\frac{k}{K_{\bm{j}}}} I & 0 \\
      0 & \e^{\frac{k}{K_{\bm{j}}}} I 
    \end{pmatrix}
    )(h_S, g_{\bm{j}, S}) (h_1, g_1),
  \end{equation}
  where, with $s' = s - \frac{k}{K_{\bm{j}}}$,
  \begin{equation}
    \label{eq:explicith1g1}
    h_1 = ( (\e^{s'} -1) \bm{x}_S A_{\bm{j}} E_S, 0, 0),\quad g_1 =
    \begin{pmatrix}
      \e^{-s'} I & 0 \\
      0 & \e^{s'} I 
    \end{pmatrix}
    .
  \end{equation}
  As $|s'| \leq K_{\bm{j}}^{-1}$, we can make $K$ sufficiently large so that $(h_1, g_1)$ satisfies the conditions of lemma \ref{lemma:heightcontinuity}.
  From this and the fact that $\psi$ is increasing, we have that
  \begin{equation}
    \label{eq:DAlargeinteger}
    D \big( \tilde{\Gamma}( h, g) (1,
    \begin{pmatrix}
      \e^{-\frac{k}{K_{\bm{j}}}} I & 0 \\
      0 & \e^{\frac{k}{K_{\bm{j}}}} I 
    \end{pmatrix}
    ) ( h_S, g_{\bm{j}, S}) \big)^{\frac{1}{4}} \gg C \psi\left( \frac{k}{K_{\bm{j}}} \right).
  \end{equation}
  By lemma \ref{lemma:volumecalculation} and the fact that right multiplication is volume preserving, we have that the set of $\tilde{\Gamma}(h, g)$ satisfying (\ref{eq:DAlargeinteger}) has $\tilde{\mu}$-volume bounded by a constant times
  \begin{equation}
    \label{eq:DAlargevolumebound}
    C^{-2n -4} \psi\left(\frac{k}{K_{\bm{j}}} \right)^{-2n -4}.
  \end{equation}
  Bounding the volume of the set $\tilde{\Gamma} \backslash (H \rtimes G) - \mathcal{G}_{\bm{j}}( \psi, C)$ by summing (\ref{eq:DAlargevolumebound}) over $S\subset \{1, \dots, n\}$ and nonnegative $k \in \mathbb{Z}$, we obtain the bound
  \begin{equation}
    \label{eq:ksumvolumebound}
    C^{-(2n + 4)} \sum_{k\geq 0} \psi\left( \frac{k}{K_{\bm{j}}} \right)^{-(2n + 4)} \ll C^{-(2n + 4)}\left( \psi(0) +  \int_0^\infty \psi\left( \frac{x}{K_{\bm{j}}} \right)^{-(2n + 4)} \dd x\right) 
  \end{equation}
  as $\psi(x) $ is increasing.
  The bound (\ref{eq:Gjvolumebound}) follows by changing variables.   
\end{proof}

We now proceed to the proof of theorem \ref{theorem:thetasumbound2}.

\begin{proof}[Proof of theorem \ref{theorem:thetasumbound2}]
    From (\ref{eq:chindecomp}) we express $\theta_{\mathcal{B}}(M, X, \bm{x}, \bm{y})$ as 
  \begin{multline}
    \label{eq:Sdecomp}
    \sum_{S \subset \{1, \dots, n\}} \sum_{\bm{j} \geq 0} \sum_{\bm{m}\in \mathbb{Z}^n}f_n \left( \frac{1}{M}(\bm{m} + \bm{x} + M\bm{x}_S B) B^{-1} E_S A_{\bm{j}}\right) \e\left( \frac{1}{2}\bm{m} X \transpose{\!\bm{m}} + \bm{m} \transpose{\!\bm{y}} \right).
  \end{multline}
  We break the sum in (\ref{eq:Sdecomp}) into terms $\bm{j}$ such that $2^{j_i} b_{j_i}^{-1} \leq M$ for all $i$ and terms $\bm{j}$ such that $2^{j_i} b_{j_i}^{-1} > M$ for some $i$.
  Using (\ref{eq:Thetaexample}), we write the first part as
  \begin{equation}
    \label{eq:smallj}
    \e( \tfrac{1}{2} \bm{x}X\transpose{\bm{x}}) M^{\frac{n}{2}} (\det B)^{\frac{1}{2}} \sum_{\substack{ \bm{j} \geq 0 \\ 2^{j_i} b_{j_i}^{-1} \leq M}} 2^{-\frac{1}{2} ( j_1 + \cdots + j_n)} \Theta_{f_n}\left( (h, g(MB, X))(h_S, g_{\bm{j}, S}) \right),
  \end{equation}
  where $h = (\bm{x}, \bm{y} - \bm{x}X , 0)$ and
  \begin{equation}
    \label{eq:gMBXdef}
    g(MB, X) =
    \begin{pmatrix}
      I & X \\
      0 & I
    \end{pmatrix}
    \begin{pmatrix}
      \frac{1}{M} B^{-1} & 0 \\
      0 & M B
    \end{pmatrix}
    .
  \end{equation}
  Bounding this is the main work of the proof, but we first bound the contribution of the terms $\bm{j}$ with a large index.

  Suppose that $L \subset \{1, \dots, n\}$ is not empty and that $2^{j_l} > b_{j_l} M$ for all $l \in L$.
  Then the compact support of $f_1$ implies that the sum over $\bm{m}^{(L)}$, the vector of entries of $\bm{m}$ with index in $L$, has a bounded number of terms.
  We write
  \begin{equation}
    \label{eq:miform}
    \bm{m} X \transpose{\!\bm{m}} = \bm{m}^{(L)} X^{(L,L)} \transpose{\!\bm{m}^{(L)} } + 2 \bm{m}^{(L)} X^{(L,L')} \transpose{\!\bm{m}^{(L')} } +  \bm{m}^{(L')}  X^{(L',L')} \transpose{\!\bm{m}^{(L')}},
  \end{equation}
  where $L'$ is the complement of $L$, and $X^{(L_1,L_2)}$ is the matrix of entries of $X$ with row and column indices in $L_1$ and $L_2$ respectively.
  We have (\ref{eq:fndef}) that $ f_n\left( \frac{1}{M} (\bm{m} + \bm{x} + M \bm{x}_S B) B^{-1} E_S A_{\bm{j}} \right) $ factors as
  \begin{multline}
    \label{eq:fLfLprime}
    f_{\# L} \left( \frac{1}{M} ( \bm{m}^{(L)} + \bm{x}^{(L)} + M\bm{x}_S^{(L)} ) (B^{(L,L)})^{-1} E_S^{(L,L)} A_{\bm{j}}^{(L,L)} \right) \\
    \times f_{\# L'} \left( \frac{1}{M}( \bm{m}^{(L')} + \bm{x}^{(L')} + M\bm{x}_S^{(L')} ) (B^{(L',L')})^{-1} E_S^{(L',L')} A_{\bm{j}}^{(L',L')} \right),
  \end{multline}
  and so, by inclusion-exclusion and the boundedness of $f_{\#L}$, the terms $\bm{j}$ of (\ref{eq:Sdecomp}) with $\bm{j}_l > b_{j_i} M$ for some $i$ is at most a constant times
  \begin{equation}
    \label{eq:jlarge}
    \sum_{\substack{L \subset \{1, \dots, n\} \\ L \neq \emptyset}} \sum_{S \subset L} \sum_{\bm{m}^{(L)}} \big|\theta_{\mathcal{B}^{(L')}}(M, X^{L',L'}, \bm{x}^{(L')}, \bm{y}^{(L')} + \bm{m}^{(L)} X^{(L,L')})\big|,
  \end{equation}
  where the sum over $\bm{m}^{(L)}$ has a bounded number of terms, $\mathcal{B}^{(L')}$ is the edge of $\mathcal{B}$ associated to $L'$, and we have used the decomposition (\ref{eq:chindecomp}) to express $\theta_{\mathcal{B}^{(L')}}(M, X^{L',L'}, \bm{x}^{(L')}, \bm{y}^{(L')} + \bm{m}^{(L)} X^{(L,L')})$ as
  \begin{multline}
    \label{eq:reversedecomp}
    \sum_{S' \subset L'} \sum_{\bm{j}_{L'}} \sum_{\bm{m}_{L'}} f_{\# L'} \left( \frac{1}{M}( \bm{m}^{(L')} + \bm{x}^{(L')} + M\bm{x}_S^{(L')} ) (B^{(L',L')})^{-1} E_S^{(L',L')} A_{\bm{j}}^{(L',L')} \right) \\
    \times \e\left( \tfrac{1}{2}\bm{m}^{(L')} X^{(L',L')} \transpose{\!\bm{m}^{(L')} } + \bm{m}^{(L')} \transpose{(\bm{y}^{(L')} + \bm{m}^{(L)} X^{(L,L')})} \right).
  \end{multline}

  When $L = \{1, \dots, n\}$, the corresponding part of (\ref{eq:jlarge}) is clearly bounded.
  For any other $L$, we may apply theorem \ref{theorem:thetasumbound1} (emphasizing the importance of the uniformity in $\bm{y}$) to conclude for any $\epsilon > 0$, there are full measure sets $\mathcal{X}^{(n- \#L)} = \mathcal{X}^{(n- \#L)}(\epsilon)$ such that if $X^{(L',L')} \in \mathcal{X}^{(n- \#L)}$, the corresponding part of (\ref{eq:jlarge}) is $\ll M^{ \frac{n - \#L}{2} + \epsilon}$ for any $\epsilon > 0$.
  It follows that (\ref{eq:jlarge}) is $\ll M^{\frac{n}{2}}$ assuming that $X$ is such that $X^{(L',L')} \in \mathcal{X}^{(n - \#L)}$ for all nonempty $L \subset \{1, \dots, n\}$.

  We now return to (\ref{eq:smallj}).
  We let $\mathcal{X}_{\bm{j}}(\psi, C)$ to be the set of $(X, \bm{y})  $ with all entries in the interval $(-\tfrac{1}{2}, \tfrac{1}{2}]$ such that there exist $\bm{u}\in (-\tfrac{1}{2}, \tfrac{1}{2})^n$, $A \in \mathrm{GL}(n, \mathbb{R})$ and $T \in \mathbb{R}^{n\times n}_{\mathrm{sym}}$ satisfying
  \begin{equation}
    \label{eq:BAsmall}
    \sup_{B \in \mathcal{K}} || \left((B A)^{-1} - I\right)A_{\bm{j}} || \leq \epsilon,
  \end{equation}
   $|| T || \leq \epsilon$, and
  \begin{equation}
    \label{eq:Xperturb}
    \tilde{\Gamma}\bigg( (\bm{u}, \bm{y} - \bm{u}X, 0),
    \begin{pmatrix}
      I & X \\
      0 & I
    \end{pmatrix}
    \begin{pmatrix}
      A & 0 \\
      0 & \transpose{A}^{-1}
    \end{pmatrix}
    \begin{pmatrix}
      I & 0 \\
      T & I
    \end{pmatrix}
    \bigg) (h_S, g_{\bm{j}, S}) 
    \in \mathcal{G}_{\bm{j}}(\psi, C).
  \end{equation}
  Here we let $\epsilon > 0$ be a sufficiently small constant,
  % depending on the epsilon from lemma \ref{lemma:heightcontinuity},
  $\mathcal{G}_{\bm{j}}(\psi, C)$ is defined in (\ref{eq:GjpsiCdef}), and $\mathcal{K}$ is the compact subset from the statement of theorem \ref{theorem:thetasumbound2} identified with the compact subset of positive diagonal matrices $B$ in the obvious way.
  We then set $\mathcal{X}(\psi)$ to be the set of $(X, \bm{y}) \in \mathbb{R}^{n\times n}_{\mathrm{sym}} \times \mathbb{R}^n$ such that
  \begin{multline}
    \label{eq:Xpsidef}
    ( X + R , \bm{y} R + \bm{s}_R + \bm{s}) \in \bigcup_{C > 0} \bigcap \mathcal{X}_{\bm{j}}( \psi, C 2^{a( j_1 + \cdots + j_n)}) \\
    \cap \bigcap_{\substack{ L \subset \{1, \dots, n\} \\ L \neq \emptyset}} \{ (X_1, \bm{y}_1) \in \mathbb{R}^{n\times n} \times \mathbb{R}^n : X_1^{(L',L')} \in \mathcal{X}^{(n - \#L)} \}
  \end{multline}
  for some $(R, \bm{s}) \in \mathbb{Z}^{n\times n} \times \mathbb{Z}^n$, where $\bm{s}_R \in \mathbb{R}^{n}$ has entries $0$ or $\tfrac{1}{2}$ depending on whether the corresponding diagonal entry of $R$ is even or odd, and $a> 0$ is a constant to be determined. 
  
  We first verify that $\mathcal{X}(\psi)$ has full measure, noting that it is enough to show that
  \begin{equation}
    \label{eq:settobound}
    \bigcup_{C > 0} \bigcap_{\bm{j} \geq 0} \mathcal{X}_{\bm{j}}(\psi, C 2^{a( j_1 + \cdots + j_n)})
  \end{equation}
  has full measure in the subset $\mathcal{X}_0$ of $\mathbb{R}^{n\times n}_{\mathrm{sym}} \times \mathbb{R}^n$ having all entries in the interval $(-\tfrac{1}{2}, \tfrac{1}{2}]$.
  Let us suppose that the Lebesgue measure of the complement of $\mathcal{X}_{\bm{j}}(\psi, C)$ in $\mathcal{X}_0$ is greater than some $\delta > 0$, which we assume is small.
  Now, with respect to the measure $( \det A)^{-2n - 1} \prod_{i,j} \dd a_{ij}$ on $\mathrm{GL}(n, \mathbb{R})$, the volume of the set of $A \in \mathrm{GL}(n, \mathbb{R})$ satisfying (\ref{eq:BAsmall}) is within a constant multiple (depending on $\mathcal{K}$) of $2^{-n(j_1 + \cdots + j_n)}$. 
  Then, using the expression (\ref{eq:Haarmeasure1}), (\ref{eq:Haarmeasure2}) for the Haar measure on $H \rtimes G$, we have
  \begin{equation}
    \label{eq:complementmeasure}
    \tilde{\mu}\left( \tilde{\Gamma} \backslash (H \rtimes G) - \mathcal{G}_{\bm{j}}(\psi, C) \right) \gg \delta 2^{-n(j_1 + \cdots + j_n)},
  \end{equation}
  with implied constant depending on $\mathcal{K}$.
  From lemma \ref{lemma:psivolumebound} it follows that
  \begin{equation}
    \label{eq:complementmeasurebound}
    \mathrm{meas} \left( \mathcal{X}_0 - \mathcal{X}_{\bm{j}}(\psi, C) \right) \ll C_{\psi} C^{-2n - 4} 2 ^{(n+1)(j_1 + \cdots + j_n)},
  \end{equation}
  and we find that
  \begin{multline}
    \label{eq:complementmeasurebound1}
    \mathrm{meas}\left( \mathcal{X}_0 - \bigcup_{C > 0} \bigcap_{\bm{j} \geq 0} \mathcal{X}_{\bm{j}}(\psi, C 2^{a(j_1 + \cdots + j_n)}) \right) \\
    \ll  \lim_{C \to \infty} C_\psi C^{-2n- 4} \sum_{\bm{j} \geq 0}  2^{((n+1) - a(2n+4))( j_1 + \cdots + j_n)} = 0
  \end{multline}
  as long as $a > \frac{n+1}{2n + 4}$.

  Now let us suppose that $(X, \bm{y}) \in \mathcal{X}(\psi)$.
  By theorem \ref{theorem:thetaautomorphy}, the size of the theta functions in (\ref{eq:smallj}) is invariant under the transformation on the left of (\ref{eq:Xpsidef}), so we may assume that $X \in \mathcal{X}_0$ as well.
  In particular, we have that $(X, \bm{y})$ is in $\mathcal{X}_{\bm{j}}(\psi, C 2^{a(j_1 + \cdots + j_n)})$ for some $C > 0$ (independent of $\bm{j}$) and all $\bm{j}\geq 0$.
  We have from corollary \ref{corollary:upperbound} and the definition of the height function $D$ that 
  \begin{equation}
    \label{eq:thetabound}
    \ll M^{\frac{n}{2}} \sum_{S \subset \{1, \dots, n\}}  \sum_{\substack{\bm{j} \geq 0 \\ 2^{j_i} b_{j_i}^{-1} \leq  M}}  2^{-\frac{1}{2}( j_1 + \cdots + j_n)} D\left( \tilde{\Gamma}(h, g(MB, X))(h_S, g_{\bm{j},S})\right)^{\frac{1}{4}}
  \end{equation}
  bounds (\ref{eq:smallj}).
  Now for all $\bm{j} \geq 0$ there is a $\tilde{\Gamma}(h',g) \in \mathcal{G}_{\bm{j}}(\psi, C 2^{a(j_1 + \cdots + j_n)})$ with $g$ of the form
  \begin{equation}
    \label{eq:gXAT}
    g =
    \begin{pmatrix}
      I & X \\
      0 & I
    \end{pmatrix}
    \begin{pmatrix}
      A & 0 \\
      0 & \transpose{\!A}^{-1} 
    \end{pmatrix}
    \begin{pmatrix}
      I & 0 \\
      T & I
    \end{pmatrix}
  \end{equation}
  satisfying (\ref{eq:BAsmall}) and $|| T || \leq \epsilon$ and $h'$ having the for $(\bm{u}, \bm{y} - \bm{u}X, 0)$ for some $\bm{u} \in (-\tfrac{1}{2}, \tfrac{1}{2})^n$.
  We have
  \begin{equation}
    \label{eq:grearrange}
    (h', g)( 1, 
    \begin{pmatrix}
      \frac{1}{M} I  & 0 \\
      0 & M I 
    \end{pmatrix}
    ) (h_S, g_{\bm{j}, S})
    = (h, g(MB, X))(h_S,  g_{\bm{j}, S}) (h_1, g_1),
  \end{equation}
  where
  \begin{equation}
    \label{eq:h1def}
    h_1 = \left( - \bm{x}_SA_{\bm{j}} E_S + \bm{x}_S (BA)^{-1} A_{\bm{j}}E_S + \frac{1}{M}( \bm{u} - \bm{x}) B^{-1} A_{\bm{j}} E_{\bm{j}}, 0, 0 \right)
  \end{equation}
  and
  \begin{equation}
    \label{eq:g1def}
    g_1 = g_{\bm{j},S}^{-1}
    \begin{pmatrix}
      BA & 0 \\
      0 & \transpose{(BA)}^{-1}
    \end{pmatrix}
    \begin{pmatrix}
      I & 0 \\
      \frac{1}{M^2} T & I
    \end{pmatrix}
    g_{\bm{j}, S}. 
  \end{equation}
  Recalling that $2^{j_i} \leq M$, the conditions (\ref{eq:BAsmall}) and $|| T || \leq \epsilon$ implies that $(h_1, g_1)$ satisfies the conditions of lemma \ref{lemma:heightcontinuity} for all $M$, which then implies
  \begin{multline}
    \label{eq:DgDgMX}
    D(\tilde{\Gamma} (h, g(MB, X) (h_S, g_{\bm{j}, S}))^{\frac{1}{4}} \asymp D\left(  \tilde{\Gamma}(h',g)(1,
      \begin{pmatrix}
        \frac{1}{M} I & 0 \\
        0 & M I
      \end{pmatrix}
      ) (h_S, g_{\bm{j}, S}) \right)^{\frac{1}{4}} \\
    \ll C 2^{a(j_1 + \cdots + j_n)} \psi(\log M)
  \end{multline}
  since $(h',g) \in \mathcal{G}_{\bm{j}}(\psi, C 2^{a(j_1 + \cdots + j_n)})$.
  Taking $a = \frac{2n + 3}{4n + 8}$  so that $\frac{n+1}{2n + 4} < a < \frac{1}{2}$, it follows that (\ref{eq:thetabound}) is bounded by
  \begin{equation}
    \label{eq:thetabound1}
    \ll C M^{\frac{n}{2}}\psi(\log M) \sum_{\bm{j} \geq 0} 2^{- (\frac{1}{2} -a)(j_1 + \cdots + j_n)}  \ll C M^{\frac{n}{2}} \psi( \log M),
  \end{equation}
  and theorem \ref{theorem:thetasumbound2} follows.
\end{proof}

\bibliographystyle{plain}
\bibliography{references.bib}

\begin{thebibliography}{10}

\bibitem{Borel1969}
Armand Borel.
\newblock {\em Introduction aux groupes arithm\'{e}tiques}.
\newblock Publications de l'Institut de Math\'{e}matique de l'Universit\'{e} de
  Strasbourg, XV. Actualit\'{e}s Scientifiques et Industrielles, No. 1341.
  Hermann, Paris, 1969.

\bibitem{BorelJi2006}
Armand Borel and Lizhen Ji.
\newblock Compactifications of locally symmetric spaces.
\newblock {\em J. Differential Geom.}, 73(2):263--317, 2006.

\bibitem{ConsentinoFlaminio2015}
Salvatore Cosentino and Livio Flaminio.
\newblock Equidistribution for higher-rank {A}belian actions on {H}eisenberg
  nilmanifolds.
\newblock {\em J. Mod. Dyn.}, 9:305--353, 2015.

\bibitem{FedotovKlopp2012}
Alexander Fedotov and Fr\'{e}d\'{e}ric Klopp.
\newblock An exact renormalization formula for {G}aussian exponential sums and
  applications.
\newblock {\em Amer. J. Math.}, 134(3):711--748, 2012.

\bibitem{FiedlerJurkatKorner1977}
H.~Fiedler, W.~Jurkat, and O.~K\"{o}rner.
\newblock Asymptotic expansions of finite theta series.
\newblock {\em Acta Arith.}, 32(2):129--146, 1977.

\bibitem{Grenier1988}
Douglas Grenier.
\newblock Fundamental domains for the general linear group.
\newblock {\em Pacific J. Math.}, 132(2):293--317, 1988.

\bibitem{Knapp2002}
Anthony~W. Knapp.
\newblock {\em Lie groups beyond an introduction}, volume 140 of {\em Progress
  in Mathematics}.
\newblock Birkh\"{a}user Boston, Inc., Boston, MA, second edition, 2002.

\bibitem{LionVergne1980}
G\'{e}rard Lion and Mich\`ele Vergne.
\newblock {\em The {W}eil representation, {M}aslov index and theta series},
  volume~6 of {\em Progress in Mathematics}.
\newblock Birkh\"{a}user, Boston, Mass., 1980.

\bibitem{MarklofWelsh2021c}
Jens Marklof and Matthew Welsh.
\newblock {S}egal-{S}hale-{W}eil representation, theta functions, and
  applications.
\newblock In preparation, 2022.

\bibitem{MarklofWelsh2021a}
Jens Marklof and Matthew Welsh.
\newblock Bounds for theta sums in higher rank {I}.
\newblock {\em J. d'Analyse Math.}, 2023.

\bibitem{Mumford1983}
David Mumford.
\newblock {\em Tata lectures on theta. {I}}, volume~28 of {\em Progress in
  Mathematics}.
\newblock Birkh\"{a}user Boston, Inc., Boston, MA, 1983.
\newblock With the assistance of C. Musili, M. Nori, E. Previato and M.
  Stillman.

\bibitem{Terras1988}
Audrey Terras.
\newblock {\em Harmonic analysis on symmetric spaces and applications. {II}}.
\newblock Springer-Verlag, Berlin, 1988.

\end{thebibliography}

\noindent
JM: School of Mathematics, University of Bristol, Bristol BS8 1UG, U.K.\\

\noindent
MW: Department of Mathematics, University of Maryland, College Park, MD 20742, USA

\end{document}